\documentclass[reqno, 12pt]{amsart}
\usepackage[utf8]{inputenc}

\usepackage[margin=1in]{geometry}
\usepackage[shortlabels]{enumitem}
\usepackage{amsmath,amscd,amsthm,amsfonts,amssymb}
\usepackage{eucal}
\usepackage{mathrsfs}
\usepackage{float}
\usepackage{tikz-cd, comment, fancyvrb}
\usepackage{todonotes}
\usepackage{xcolor}

\RequirePackage{color}
\definecolor{myred}{rgb}{0,0,0}
\definecolor{mygreen}{rgb}{0,0.5,0}
\definecolor{myblue}{rgb}{0,0,0.65}

\usepackage[pagebackref]{hyperref}
\hypersetup{citecolor=blue}
\usepackage{tikz}
\usetikzlibrary{matrix,arrows,decorations.pathmorphing}

\theoremstyle{plain}
\newtheorem{theorem}{Theorem}[section]
\newtheorem{proposition}[theorem]{Proposition}

\newtheorem{lemma}[theorem]{Lemma}
\newtheorem*{lemma*}{Lemma}
\newtheorem*{proposition*}{Proposition}

\newtheorem*{truefact*}{Fact}

\theoremstyle{definition}
\newtheorem{definition}[theorem]{Definition}

\theoremstyle{remark}
\newtheorem*{remark}{Remark}

\newcommand{\bb}[1]{\expandafter\newcommand\expandafter{\csname #1\endcsname}{{\mathbb {#1}}}} 

\bb C
\bb R
\bb Q
\bb Z
\bb N

\bb F  
\newcommand{\LL}{\mathcal L}

\renewcommand{\le}{\leqslant}
\renewcommand{\ge}{\geqslant}
\renewcommand{\leq}{\leqslant}
\renewcommand{\geq}{\geqslant}
\renewcommand{\phi}{\varphi}
 
\newcommand{\bfbeta}{\boldsymbol{\beta}}
\newcommand{\bfalpha}{\boldsymbol{\alpha}}

\newcommand{\bfm}{\boldsymbol{m}}
\newcommand{\bfn}{\boldsymbol{n}}
\newcommand{\bfq}{\boldsymbol{q}}
\newcommand{\co}{\textnormal{core}}

\title[ RMF and Making Squares  ]{Random Multiplicative Functions and Making Squares from Polynomial Values}
\author{R\'egis de la Bret\`eche}
\address{ 
Universit\'e   Paris Cité, Sorbonne Universit\'e, CNRS UMR 7586\\ Institut Universitaire de France\\ Institut de Math\'ematiques de Jussieu-Paris Rive Gauche\\
Case Postale 7012\\
F-75251 Paris CEDEX 13\\ France}
\email{regis.delabreteche@imj-prg.fr}

\author{Victor Y. Wang}
\address{Institute of Mathematics, Academia Sinica, Taipei 106319, Taiwan}
\email{vywang@as.edu.tw}

\author{Max Wenqiang Xu}
\address{Yau Mathematical Sciences Center, Tsinghua University, Beijing, China}
\email{maxxu1729@gmail.com}
\thanks{The authors would like to thank
Tim Browning,
C\'edric Pilatte,
and Besfort Shala
for helpful discussions.
We used AI tools to locate technical references,
to catch typos and minor errors,
to draft preliminary versions of Proposition~\ref{large-squares},
and to numerically optimize the exponent of Lemma~\ref{lemma maj CBalphabeta}.
The first author is supported by IUF senior and ANR-FNS Grant ANR-24-CE93-001.
The second author is supported by NSTC grant 114-2115-M-001-010-MY2.
The third author was supported by a Simons Junior Fellowship from the Simons Foundation.}
\dedicatory{ \`A Fouvry pour son  73\`eme anniversaire}
\date{}

\begin{document}

\begin{abstract} 
For a large family of polynomials $P(X)\in \mathbb{Z}[X]$, 
we prove central limit theorems for $\sum_{n\le N} f(P(n))$ for both Rademacher and extended Rademacher multiplicative functions~$f$. To achieve this, we establish a paucity phenomenon in counting solutions to
\[P(n_1)P(n_2)P(n_3)P(n_4) = \square,  \quad 1\le n_1, n_2, n_3, n_4 \le N.\]
Results of Hooley, Evertse--Silverman, and Reuss
play an important role in the proof.
Our estimates are sharpest for $\deg P = 2$,
thanks to the rich theory of Pell--Fermat equations.
\end{abstract}

\maketitle 

\setcounter{tocdepth}{1}
\tableofcontents
\setcounter{tocdepth}{3}

\section{Introduction}
\label{intro}

The study of random multiplicative functions has been an active research area. A central question is to study the limiting distribution for the partial sum 
\[\sum_{n\le N} a(n)f(n)\]
where $a(n)$ are some fixed weights and $f(n)$ is a random multiplicative function (RMF). A RMF is defined by first setting $f(p)$ i.i.d random variables with certain distribution $\mu$ and then define $f(n)$ (completely) multiplicatively.
There are three types of RMFs that are commonly studied: Steinhaus, Rademacher and extended Rademacher with the motivation to model different families of deterministic multiplicative functions. The modern study of the limiting distribution of partial sums of RMFs has surprisingly   many interesting connections to many other branches of mathematics.
In particular, many Diophantine problems have naturally arisen in the process of understanding the moments of the partial sums of RMFs.

In this paper, we study the well-known example that $a(n)$ is the indicator function of the polynomial value $P(n)$ for a given fixed polynomial $P(X)\in \mathbb{Z}[X]$
with positive leading coefficient.
It is widely believed that 
\[\sum_{n\le N } f(P(n)) \]
should behave Gaussian unless $P(X)$ is essentially a degree 1 polynomial. 
The modern tools from Martingale theory can fairly easily reduce the probabilistic number theory question to certain Diophantine questions that we must address in this paper.

The case that $f(n)$ is Steinhaus, i.e.~$f(p)$ is uniformly distributed on the complex unit circle and $f(n)$ is completely multiplicative, is completely solved by Klurman--Shkredov--Xu \cite{KSX23} for any given polynomial $P(X)$. The Diophantine problem studied there is more or less counting solutions to 
\[P(n_1)P(n_2) = P(n_3)P(n_4),  \quad 1\le n_1, n_2, n_3, n_4 \le N. \]
 A paucity phenomenon is required, i.e.~the number of solutions to the equation is dominated by the ``diagonal'' solutions 
 where $\{P(n_1),P(n_2)\} = \{P(n_3),P(n_4)\}$.

One can proceed similarly to attack the case $f(n)$ is Rademacher or extended Rademacher. For the Rademacher case, $f(p)$ is an i.i.d. random variable uniformly distributed on $\{1, -1\}$, and $f(n)$ is multiplicatively defined with $f(n)=0$ if $n$ is not square-free (the historical reason for having such a restriction is to model the M\"{o}bius function).
An extended Rademacher RMF is similarly defined but $f(n)$ is defined completely multiplicatively. The Diophantine equation that we need to deal with in both cases is roughly
\begin{equation}
\label{square-product-4}
P(n_1)P(n_2)P(n_3)P(n_4) = \square,  \quad 1\le n_1, n_2, n_3, n_4 \le N
\end{equation}
with the additional condition $\mu^2(P(n_i)) = 1$ in the Rademacher case.
Perhaps surprisingly, this Diophantine problem is significantly harder than the previous one. 

The main task of the paper is to establish the above fourth moment type estimate, but we would like to emphasize one point that even the variance computation is quite nontrivial in the extended Rademacher case. Notice that the perfect orthogonality in Steinhaus case does not hold for extended Rademacher RMF $f$ as
\[\mathbb{E}[f(n)f(m)] = 1_{nm=\square}.\]
Given this, the variance of the partial sum along polynomial values is roughly
\[\mathbb{E}\Big[\Big(\sum_{n\le N} f(P(n))\Big)^{2}\Big] = \sum_{m,n\le N}1_{P(m)P(n)=\square}.\]
Thus the usual simple variance computation of the partial sums of RMF becomes nontrivial in the extended Rademacher case. It requires us to count solutions to 
\[P(n_1)P(n_2) = \square, \quad 1\le n_1, n_2 \le N.\]
This is straightforward if $P(n_i)$ are square-free (as $P(n_1)=P(n_2)$), which is automatic in the Rademacher case but not in the extended Rademacher case. This feature reflects and causes the difference in studying the two cases. 

In this paper, we manage to solve the Diophantine problems for a large family of $P(X)$ and thus 
establish Theorems~\ref{rademacher-P} and~\ref{extended-rademacher-P} below.
Many interesting Diophantine problems remain,
including the analysis of higher moments in the Rademacher and extended Rademacher cases,
even for $\deg{P}=2$.
In the Steinhaus case, a paucity phenomenon for all even moments was established by Wang--Xu \cite{WX}.
The present work builds on methods from
\cite{WX} and the subsequent work of Chinis--Shala \cite{CS25},
including the use of results of Huxley \cite{Huxley} and Bombieri--Pila \cite{BP},
but features additional complications
from large degrees and large square factors,
which are resolved using tools from Diophantine geometry
going back to Hooley \cite{H84},
Evertse--Silverman \cite{ES86},
and Reuss \cite{Reuss},
as well as new combinatorial decompositions of Diophantine point counts
based on the complete graph $K_4$.

For illustration, assuming $\mu^2(P(n_i))=1$ for $1\le i\le 4$,
we might decompose the solution set to \eqref{square-product-4}
based on the size of $\gcd(P(n_3),P(n_4))$.
Now imagine that we label each edge $ij$ of~$K_4$
with the integer $\gcd(P(n_i),P(n_j))$.
The only edge of $K_4$
neither adjacent to vertex $3$ nor~$4$
is the edge $12$.
Thus if $\gcd(P(n_3),P(n_4))$ is very large, say, then in favorable circumstances
we can use \eqref{square-product-4} to deduce that $\gcd(P(n_1),P(n_2))$ is also very large.
On the other hand, if $\gcd(P(n_3),P(n_4))$ is small then congruential information modulo
$\gcd(P(n_i),P(n_j))$ becomes stronger for edges $ij\ne 34$,
and we want to assemble such information in an optimal manner.
The full combinatorial decompositions we use
are a more complicated variant of this idea, so we defer further details to the proofs.

For $\deg P=2$, we develop a new method using results on the size of solutions of Pell--Fermat equations, allowing us to obtain sharper Diophantine estimates than for $\deg P\ge 3$.
In this case, the multiplicative structure of quadratic norm forms
plays an important role.
It would be interesting to see if our results for $\deg{P}\ge 3$
could similarly be improved using ideas from algebraic number theory or geometry.
We hope that Diophantine equations of the sort studied in the present paper
will inspire future advances in Diophantine analysis,
and in analogous questions in other settings.

\subsection*{Main CLT results}

In the Rademacher case, our main CLT result is the following,
which extends the previous nice work of Chinis--Shala \cite{CS25}, where they proved the case where $\deg{P} = 2$
(or if every irreducible factor of $P$ is linear).\footnote{After writing our paper, we learned that a small modification of \cite{CS25}, specifically in \cite[\S3.2.2]{CS25}, would lead to an alternative proof of Theorem~\ref{rademacher-P}.
What was missing for general $P$
is the elementary Ekedahl-type sieve bound
$\#\{(n_1,n_2)\in [1,N]^2: \gcd(P(n_1),P(n_2))>N\} \ll_{P,\varepsilon} N^{1+\varepsilon}$.
This insight is due to Ihor Pylaiev
and was kindly communicated to us by Besfort Shala.
Nonetheless, our proof of Theorem~\ref{rademacher-P}
is different,
and is easier to generalize to the extended Rademacher case.}
\begin{theorem}
\label{rademacher-P}
Let $P\in \Z[X] $ be separable with $\deg{P}\ge 2$
and with positive leading coefficient.
Assume that
\begin{equation}
\label{count-square-free-values}
\liminf_{N\to \infty} \frac{\#\{1\le n\le N: \mu^2(P(n))=1\}}{N} > 0.
\end{equation}
Let $f$ be a Rademacher RMF.
Then as $N$ tends to $+\infty$,
\[ \frac{1}{\sqrt{\#\{1\le n\le N: \mu^2(P(n))=1\}}} \sum_{1\le n \le N} f(P(n))
\xrightarrow[]{d} \mathcal{N}(0,1) \]
where $\mathcal{N}(0,1)$ stands for a Gaussian distribution with mean~$0$ and variance~$1$. 
\end{theorem}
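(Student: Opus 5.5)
We follow the standard martingale approach. Order the primes and write $S_N=\sum_{n\le N} f(P(n))$. Group the terms according to the largest prime factor: $S_N=\sum_{p} f(p)\,A_p$, where
\[ A_p=\sum_{\substack{n\le N,\ \mu^2(P(n))=1\\ P^+(P(n))=p}} f\bigl(P(n)/p\bigr). \]
Here $A_p$ depends only on $f(q)$ for $q<p$, so the summands $f(p)A_p$ form a martingale difference sequence. We normalise by $V_N=\#\{n\le N:\mu^2(P(n))=1\}$, which is $\gg N$ by \eqref{count-square-free-values}.

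The martingale CLT (McLeish, or Hall--Heyde) reduces the theorem to two facts. The first is that the conditional variance concentrates: $\sum_p A_p^2/V_N\to 1$ in probability. The second is a Lindeberg-type condition, which follows from $\sum_p \mathbb{E}[A_p^4]=o(V_N^2)$.

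Consider first the expectations. Since $P(n)$ is squarefree, $\mathbb{E}[f(P(m))f(P(n))]=1_{P(m)=P(n)}$. Because $P$ has positive leading coefficient, it is eventually injective on integers, so $\mathbb{E}\sum_p A_p^2=V_N+O(1)$. For concentration it suffices to show
\[ \mathbb{E}\Bigl(\sum_p A_p^2\Bigr)^2=V_N^2+o(N^2). \]
Expanding, both this quantity and $\sum_p\mathbb{E}A_p^4$ are bounded by counts of quadruples $n_1,\dots,n_4\le N$ with $P(n_i)$ squarefree and $P(n_1)P(n_2)P(n_3)P(n_4)=\square$, subject to the largest-prime constraints. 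The diagonal quadruples, in which the $P(n_i)$ pair up into equal values, contribute exactly $V_N^2+O(N)$. The whole problem is therefore the paucity estimate: the number of off-diagonal solutions of \eqref{square-product-4} with squarefree values is $o(N^2)$.

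For the paucity estimate we use the squarefree condition. Set $g_{ij}=\gcd(P(n_i),P(n_j))$. Squarefreeness of each $P(n_i)$ together with the product being a square forces $P(n_1)=g_{12}g_{13}g_{14}$, and similarly for the other indices, with pairwise coprime structure. In other words, $P(n_i)=\prod_{j\ne i}g_{ij}$, with the $g_{ij}$ labelling the edges of $K_4$. We split according to the size of $g_{34}$ and, symmetrically, of $g_{12}$.

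\emph{Case 1: some edge label is large, say $g_{34}>N^{1+\delta}$.} Then $P(n_3)$ and $P(n_4)$ share a common divisor larger than $N$. We apply the Ekedahl/Hooley-type bound
\[ \#\{(n_3,n_4): \gcd(P(n_3),P(n_4))>N\}\ll N^{1+\varepsilon}, \]
which is obtained by noting that $n_3\equiv n_4$ modulo a large divisor of the resultant-type structure, or by using the root structure of $P$ modulo $d$. We combine this with the observation that $(n_1,n_2)$ is then forced to satisfy $P(n_1)P(n_2)/g_{12}^2=P(n_3)P(n_4)/g_{34}^2$, which is small. This gives a total of $O(N^{2-\eta})$ off the diagonal.

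\emph{Case 2: all edge labels are at most $N^{1+\delta}$.} Since $\deg P\ge2$ and the values have size about $N^{d}$, each $P(n_i)$ is a product of three labels, and the constraint $n_i\bmod g_{ij}$ lies among $\ll d^{\omega(g_{ij})}$ roots of $P$. Fix $n_1,n_2$, which accounts for $N^2$ choices. Then $n_3$ is determined modulo $g_{13}g_{23}$ and $n_4$ modulo $g_{14}g_{24}$. The remaining freedom must be cut down using the equation $P(n_3)P(n_4)g_{12}^{-2}\cdots$. Here we use the bounds for points on curves: Bombieri--Pila, or Huxley for degree $2$, counting lattice points of $(n_3,n_4)$ on the curve $P(x)P(y)=c\,z^2$ in short residue boxes. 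The aim is to save a power of $N$ on the non-diagonal part.

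We expect Case 2 to be the main obstacle. There the divisor labels are balanced and no single congruence is strong, so one must assemble the congruences over the edges of $K_4$ optimally and pair them with a uniform Bombieri--Pila count on the relevant curves. The remaining step, the passage from the probabilistic reduction to the CLT, is routine.
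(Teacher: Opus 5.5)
The genuine gap is Case~2. It carries the entire difficulty, and you leave it as an aim rather than an argument. Worse, the route you sketch there cannot work. Once $(n_1,n_2)$ is fixed independently you have already spent $N^2$. Any bound on the completions $(n_3,n_4)$ that is uniform in $(n_1,n_2)$ is at least $1$, so this ordering can never beat $N^{2+o(1)}$. Also, $P(x)P(y)=c\,z^2$ is a surface, not a curve, so Bombieri--Pila does not apply to it.

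The missing idea is to correlate $n_1$ and $n_2$ through a large gcd. This is what the paper's Proposition~\ref{unconditional-Rademacher-off-diagonal} does with $A=1$ for $\deg P=r\ge 3$; degree $2$ is quoted from Chinis--Shala.
\begin{itemize}
\item Write $P(n_i)=d\prod_{j\ne i}d_{ij}$, so that $g_{ij}=dd_{ij}$. Your identity $P(n_i)=\prod_{j\ne i}g_{ij}$ with pairwise coprime labels is false when some prime divides all four values.
\item Since $g_{12}g_{13}g_{14}\ge P(n_1)\gg n_1^{r}$, after relabelling you get $g_{12}\gg n_1^{r/3}$, which is $\ge n_1$ for $r\ge 3$.
\item Given $n_1$, the divisor bound and \eqref{rho-bound} leave $\ll N^{\varepsilon}(N/g_{12}+1)$ choices for $n_2$. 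So the pairs $(n_1,n_2)$ cost only $N^{1+\varepsilon}$.
\item Then $n_3$ lies in $\ll N^{\varepsilon}$ classes modulo $dd_{13}d_{23}=P(n_3)/d_{34}$, which is large unless $d_{34}$ is huge.
\item Finally $n_4$ is essentially determined, because $P(n_4)=\co(P(n_1)P(n_2)P(n_3))$. The paper uses Lemma~\ref{siegel} at this step to allow square factors.
\end{itemize}
The paper splits at $D_{34}\lessgtr (N_3+N_4)^{r-\delta}$. In the range where $d_{34}$ is huge, it applies Lemma~\ref{CS2.5} to the two curves $d_{23}d_{24}P(x)=d_{13}d_{14}P(y)$ and $d_{14}d_{24}P(x)=d_{13}d_{23}P(y)$, whose coefficients are then small. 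Balancing the two ranges gives $N^{14/9+\varepsilon}$.

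Your Case~1 reaches a correct conclusion, but not for the reason you give: $P(n_3)P(n_4)/g_{34}^2=d_{13}d_{14}d_{23}d_{24}$ need not be small when $r\ge 3$. What works is the following.
\begin{itemize}
\item Pairs with $g_{34}>N$ number $\ll N^{1+\varepsilon}$: fix $n_3$ and a divisor $g>N$ of $P(n_3)$; then $n_4$ lies in $\rho_P(g)$ classes modulo $g$.
\item Then $(n_1,n_2)$ lies on the curve $d_{23}d_{24}P(x)=d_{13}d_{14}P(y)$, with $d_{13}d_{14}\ne d_{23}d_{24}$ off the diagonal, so Lemma~\ref{CS2.5} gives $N^{1/3+\varepsilon}$.
\end{itemize}
This is essentially the alternative mentioned in the footnote to Theorem~\ref{rademacher-P}. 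Combined with the pigeonhole step above, your split at $g_{34}\lessgtr N$ would actually close. Case~2 then contributes $\ll N^{1+\varepsilon}\min(N_3,\,N/N_3^{r-1}+1)\ll N^{1+1/r+\varepsilon}$ for $r\ge 3$, and $\ll N^{11/6+\varepsilon}$ when $r=2$.

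Finally, your diagonal bookkeeping is wrong. Two contributions are $\asymp\sum_p a_p^2$, where $a_p=\#\{n\le N:\mu^2(P(n))=1,\ P^+(P(n))=p\}$:
\begin{itemize}
\item the pairings $\{13,24\}$ and $\{14,23\}$ in $\mathbb{E}(\sum_p A_p^2)^2$;
\item the whole diagonal of $\sum_p\mathbb{E}A_p^4$.
\end{itemize}
This quantity is $o(N^2)$ but not $O(N)$, and it must be proved; it is condition~(2) of Theorem~\ref{thm: rad}. Here the proof is easy. A squarefree $P(n)$ with $P^+(P(n))=p$ divides $\prod_{q\le p}q$, so small $p$ allow few $n$. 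For large $p$ one has $a_p\ll N/p+1$. Hence $\max_p a_p=o(N)$.
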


\begin{proof}
This follows from Chinis--Shala \cite{CS25} if $\deg{P} = 2$
(or if every irreducible factor of $P$ is linear),
and from Proposition~\ref{unconditional-Rademacher-off-diagonal} of the present paper
if $\deg{P} \ge 3$.
\end{proof}

\begin{remark}
Under what conditions does \eqref{count-square-free-values} hold?
First, we must assume that $P$ is \emph{admissible},
meaning that for every prime $p$ there exists $n\in \Z$ such that $p^2\nmid P(n)$.
Now, if every irreducible factor of $P$ has degree $\le 3$,
then \eqref{count-square-free-values} follows from
\cite[Theorem~1.2]{BookerBrowning},
a result building on the work of many authors,
including Hooley \cite{H67} and Reuss \cite{Reuss}.
Under the ABC Conjecture, \eqref{count-square-free-values} holds for all admissible $P$,
thanks to Granville \cite{GranvilleABC}.
Alternatively, it is implicit in \cite[\S4, proof of Theorem~1]{GranvilleABC}
that \eqref{count-square-free-values} would follow from the estimate\footnote{For the reader's convenience, we sketch Granville's reduction of \eqref{count-square-free-values} to \eqref{soft-SFSC}.
First, \cite[\S4, Proposition~1]{GranvilleABC}
gives an unconditional asymptotic formula for
$A(N) := \#\{1\le n\le N: p^2\mid P(n) \Rightarrow p>N\}$,
with main term of order $N$.
Second, $0\le A(N) - \#\{1\le n\le N: \mu^2(P(n))=1\} = o(N)$
by \eqref{soft-SFSC}.}
\begin{equation}
\label{soft-SFSC}
\lim_{N\to \infty} \frac{\#\{1\le n\le N: \exists p>N,\; p^2\mid P(n)\}}{N}
= 0,
\end{equation}
which is equivalent to
what Miller \cite[\S2.2]{Miller} calls the \emph{Square-Free Sieve Conjecture}.\footnote{Miller states the conjecture
in the form $\#\{N< n\le 2N: \exists p>\log{N},\; p^2\mid P(n)\} = o(N)$,
but this is easily shown to be equivalent to
$\#\{N<n\le 2N: \exists p>2N,\; p^2\mid P(n)\} = o(N)$.
Therefore, \eqref{soft-SFSC}
implies Miller's statement.
On the other hand, Miller's statement implies \eqref{soft-SFSC}
by dyadic summation over $n$.}
Finally, we expect that an unconditional Polynomial Rademacher CLT
could now be obtained for all admissible polynomials $P$
over a function field $\mathbb{F}_q(t)$,
using \cite{Ramsay} (see also \cite{Poonen}).
\end{remark}

In the extended Rademacher case, we have the following unconditional result,
which
for a large class of polynomials
confirms the Gaussian prediction of Chinis--Shala from \cite[\S1.4]{CS25}.

\begin{theorem}
\label{extended-rademacher-P}
Let $P\in \Z[X] $ be separable with $\deg{P}\ge 2$
and with positive leading coefficient.
Assume that every irreducible factor of $P$ has degree $\le 3$.
Let $g$ be an extended Rademacher RMF.
Then as $N\to +\infty$,
\[\frac{1}{\sqrt{N}} \sum_{1\le n \le N} g(P(n)) \xrightarrow[]{d} \mathcal{N}(0,1). \]
\end{theorem}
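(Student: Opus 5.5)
We prove Theorem~\ref{extended-rademacher-P} with the martingale central limit theorem, in the form already used for polynomial Rademacher sums (McLeish's theorem, as in \cite{KSX23,CS25}). Order the primes, and let $\mathcal{F}_j$ be the $\sigma$-algebra generated by $g(p_1),\dots,g(p_j)$. Each $n$ is assigned to $p=P^+(P(n))$, the largest prime factor of $P(n)$. This gives the decomposition
\[
S_N:=\sum_{n\le N} g(P(n))=\sum_p Z_p,\qquad Z_p=\sum_{\substack{n\le N\\ P^+(P(n))=p}} g(P(n)),
\]
which is a martingale difference sequence (only odd powers of $g(p)$ contribute). We assume $P(n)\neq 0$ for $n\le N$ up to $O(1)$ exceptions. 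It then suffices to verify three things after normalizing by $\sqrt N$:
\begin{enumerate}[(i)]
\item the variance is asymptotic to $N$;
\item $\sum_p \mathbb{E}[Z_p^4]=o(N^2)$, which gives the Lindeberg-type condition;
\item $\sum_p Z_p^2/N\to 1$ in probability, which we obtain by showing $\mathbb{E}\big[(\sum_p Z_p^2)^2\big]=N^2(1+o(1))$.
\end{enumerate}
Since $\mathbb{E}[g(m)]=1_{m=\square}$, each of these moments reduces to a count of solutions to $P(n_1)P(n_2)=\square$ or $P(n_1)P(n_2)P(n_3)P(n_4)=\square$, with some largest-prime constraints on the variables.

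\textbf{Variance step.} We must show
\[
\#\{(n_1,n_2)\in[1,N]^2:\ P(n_1)P(n_2)=\square\}=N+o(N).
\]
The diagonal $n_1=n_2$ contributes $N$, and the pairs with $P(n_1)=P(n_2)$, $n_1\ne n_2$, contribute $O(1)$. Write $P(n_i)=d\,a_i^2$ with $d$ the common squarefree kernel. The hypothesis that every irreducible factor has degree $\le 3$ is used here. It ensures, via \cite{BookerBrowning} and \cite{Reuss}, that
\[
\#\{n\le N:\ P(n)\text{ has a square divisor } b^2,\ b>\xi\}=o(N)\quad\text{as }\xi\to\infty.
\]
So we may assume $a_1,a_2\le \xi$ after discarding $o(N)$ pairs; this costs at most $N\cdot o(1)\cdot$(number of $n_2$ per $n_1$). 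For fixed $a_1\ne a_2$, $(n_1,n_2)$ then lies on the curve $a_2^2P(n_1)=a_1^2P(n_2)$. For $\deg P\ge 2$ this curve has $O_\varepsilon(N^{1/2+\varepsilon})$ points up to degenerate components, by Bombieri--Pila \cite{BP}; when $\deg P=2$ the curve is a Pell-type conic and we count its points directly. Summing over $a_1,a_2\le\xi$ gives $o(N)$ once $\xi\to\infty$ slowly. Two cases remain. Pairs with $a_1$ small and $a_2$ large are handled by a divisor-type bound. The case where both $P(n_i)$ have a large square factor is bounded by Cauchy--Schwarz against the $o(N)$ count above, together with a bound for the number of $n_2$ per $n_1$.

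\textbf{Fourth-moment step, the main obstacle.} For (ii) and (iii) we need the paucity statement
\[
\#\{\mathbf n\in[1,N]^4:\ P(n_1)P(n_2)P(n_3)P(n_4)=\square\}=3N^2+o(N^2),
\]
where the main term comes from the pairings $\{n_i\}=\{n_j\}$. The largest-prime constraints then remove the diagonal terms from (ii) and make (iii) match, as in \cite{KSX23}. Write $P(n_i)=d_i a_i^2$ with $d_i$ squarefree. The equation becomes $d_1d_2d_3d_4=\square$, an equation among squarefree numbers.
\begin{itemize}
\item When every $a_i\le\xi$, we follow the $K_4$ decomposition sketched in the introduction. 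Label each edge $ij$ with $g_{ij}=\gcd(d_i,d_j)$; the equation then forces $d_i=\prod_{j\ne i} g_{ij}$, up to a bounded twist.
\item If some $g_{ij}$ with $\{i,j\}$ not a diagonal pairing is large, then the opposite edge label is also large. We count pairs $(n_i,n_j)$ with large $\gcd(P(n_i),P(n_j))$ using the Ekedahl-type bound $\#\{(n,m):\gcd(P(n),P(m))>N\}\ll N^{1+\varepsilon}$.
\item Otherwise all non-diagonal labels are small. We fix them and solve the resulting congruence conditions modulo the $g_{ij}$, with a Huxley-type count \cite{Huxley} of integer points near curves, following \cite{WX,CS25}.
\item Configurations where some $a_i$ is large are again discarded via Hooley \cite{H84} / Reuss \cite{Reuss} square-factor bounds, combined with Evertse--Silverman \cite{ES86} to bound, uniformly in the coefficients, the solutions of $d\,y^2=P(n)$ for fixed $d$.
\end{itemize}
We expect the hardest part to be obtaining a power saving, or even $o(N^2)$, uniformly in the sizes of the square parts $a_i$. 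That is where the degree-$\le 3$ hypothesis enters essentially, and it is the point we would attack first, by balancing the square-part threshold $\xi$ against the $K_4$ gcd thresholds.
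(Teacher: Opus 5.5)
Your proposal has a genuine gap in the fourth-moment step, a quantitative gap in the variance step, and a fixable error in how the martingale is indexed.

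\textbf{Fourth moment.} You aim at the unrestricted count $\#\{\mathbf n\in[1,N]^4:\prod_i P(n_i)=\square\}=3N^2+o(N^2)$, uniformly in the square parts $a_i$, and you leave exactly this uniformity as the unresolved ``main obstacle.'' For $\deg P\ge 3$ the paper states that this remains open, and your suggested route does not close it.

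Suppose $n_1$ lies in the exceptional set $E$ of large square parts, even with $|E|\ll N^{1-\delta}$. Fixing $n_1,n_2,n_3$ and then choosing $n_4$ by Evertse--Silverman gives only $|E|N^{2+\varepsilon}$. H\"older via Lemma~\ref{mixed-4-correlation} helps only if $|E|=o(N^{2/3})$, which is far beyond what H$\delta$ with $\delta<1/13$ provides. Likewise, as stated, multiplying two Ekedahl-type counts $\ll N^{1+\varepsilon}$ gives $N^{2+2\varepsilon}$, not $o(N^2)$.

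The missing idea is that the unrestricted count is never needed. Let $\mathcal S$ be the set of $n\in[N_0,N]$ with $\co(P(n))\ne 1$ such that every $m\in[N_0,N]$ with $\co(P(m))=\co(P(n))$ has $P(m)/\co(P(m))\le A=N^{1/3}$. Membership depends only on $\co(P(n))$, and $\mathbb E[g(P(n))g(P(m))]=1_{P(n)P(m)=\square}$. Hence the sums over $\mathcal S$ and over its complement are orthogonal. The complement has second moment $V([N_0,N]\smallsetminus\mathcal S)=o(N)$, so it can be dropped; this is condition (1) of Theorem~\ref{theorem Extended Rademacher}.

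The martingale conditions are then needed only for $4$-tuples inside $\mathcal S$. There Proposition~\ref{unconditional-Rademacher-off-diagonal} gives $A^{11/9}N^{14/9+\varepsilon}=o(N^2)$. The equal-$P^*$ diagonal is not removed by the largest-prime constraint; it needs the separate friable/Ekedahl estimate \eqref{diagonal-martingale}. Your unrestricted target is proved only for $\deg P=2$ (Proposition~\ref{prop N4}). There the uniformity in square parts comes from the Pell bound \eqref{bound car xy} and Lemma~\ref{lemma majCNdbeta}, which your sketch does not supply.

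\textbf{Variance.} By Lemma~\ref{siegel}, an $n_1$ can have $2^{O_P(1+\omega(\co(P(n_1))))}=N^{o(1)}$ partners $n_2$, not $O(1)$. So an exceptional set that is merely $o(N)$ (your $\xi\to\infty$ slowly) does not yield $o(N)$ pairs, and your Cauchy--Schwarz and ``divisor-type'' steps have the same defect. You need a power saving: H$\delta$ from Proposition~\ref{large-squares}, plus Lemma~\ref{easy-sieve} for square divisors between $A^{1/2}$ and $N^{1/2}$. Proposition~\ref{large-squares} uses Reuss's determinant method for cubic factors in the middle range and Evertse--Silverman for $p>N^{1+\vartheta}$. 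This forces the square-part threshold to be a power of $N$, after which Proposition~\ref{easy-variance} bounds the off-diagonal by $AN^{1/3+\varepsilon}$. The same power saving is what makes the truncation above cost only $o(N)$.

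\textbf{Martingale indexing.} Index by $P^*(P(n))=P^+(\co(P(n)))$ rather than $P^+(P(n))$. If the largest prime divides $P(n)$ to an even power, $g(P(n))$ is $\mathcal F_{j-1}$-measurable, so $Z_p$ is not a martingale difference. Also, the $n$ with $P(n)=\square$ contribute a deterministic mean that must be removed.
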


\begin{proof}
If $\deg{P}=2$, this follows from Theorem~\ref{thm: CLT}.
If $\deg{P}\ge 3$, this follows from
Theorem~\ref{THM:conditional-CLT}
and Proposition~\ref{large-squares}.
\end{proof}

\begin{remark}
The statement of Theorem~\ref{extended-rademacher-P}
is still valid if we replace the set of integers $n\in \N$ such that $1\leq n\leq N$ by a subset $S\subset [1,N]$ and $\sqrt{N}$ by $\sqrt{|S|}$,
provided that
there exists a constant $c\in (0,1)$ for which
\begin{equation}
\label{thinner}
|S|\gg N\exp\{ -c\,  \sqrt{(\log N)(\log\log N)}\}.
\end{equation}
\end{remark}

\begin{remark}
There is some hope of extending Theorem~\ref{extended-rademacher-P}
to cases where $P$ has an irreducible factor of degree $\ge 4$.
This will be discussed after Theorem~\ref{THM:conditional-CLT}.
Currently, in the degree $\ge 4$ case, our proofs rely on a
power-saving version, \eqref{SFSC}, of the Square-Free Sieve Conjecture.
In~\S\ref{intro}, we have chosen to focus on unconditional results.
\end{remark}

As previously mentioned, the limiting distribution of $\sum_{1\le n\le N} f(P(n))$ or $\sum_{1\le n\le N} g(P(n))$ is Gaussian if a certain Diophantine counting problem is solved. This deduction step relies on a version of the McLeish Martingale central limit theorem, the details of which are given in~\S\ref{general-clt}.
After \S\ref{general-clt}, we gradually build up to the solution of the Diophantine problem.

As we mentioned earlier, our Diophantine counting results are sharpest for $\deg P = 2$.
The interested reader may consult Propositions~\ref{prop N2}
and~\ref{prop N4} for details on the equations
$P(n_1)P(n_2) = \square$ and $P(n_1)P(n_2)P(n_3)P(n_4) = \square$, respectively.
Weaker Diophantine estimates would suffice
for proving Theorem~\ref{extended-rademacher-P},
but we consider these equations interesting in their own right.
When $\deg P \ge 3$ and every irreducible factor of $P$ has degree $\le 3$,
an analog of Proposition~\ref{prop N2} (with a weaker error term)
may be extracted from the proof of Theorem~\ref{extended-rademacher-P},
but it remains open to prove an analog of Proposition~\ref{prop N4}
without any restriction on square divisors of $P(n_i)$, $1\le i\le 4$.

\begin{remark}
Paucity results for the equations
$P(n_1)P(n_2) = \square$ and $P(n_1)P(n_2)P(n_3)P(n_4) = \square$
do not seem to be easily susceptible to
a direct application of
existing upper-bound sieves.
\end{remark}

\subsection*{Notation}

For any integer $n \ge 1$, we define the \emph{square-free core} of $n$, denoted $\co(n)$, as the unique square-free integer such that $n = \co(n) \cdot k^2$ for some integer $k$.
Equivalently, if $k^2$ is the largest perfect square dividing $n$,
then $\co(n) := n/k^2$.
This is also equivalent to the definition
$\co(n) := \prod_{v_p(n)\textnormal{ odd}} p$
given after \eqref{core-fibers}.
We remark that
$$n/\co(n)=1 \Longleftrightarrow   \mu^2(n)=1.$$
Let $P^+(n)$ be the largest prime factor of $n$, with the convention $P^+(1)=1$.
Let \begin{equation}
    \label{def P*}
P^*(n) := P^+(\textnormal{core}(n)).\end{equation}
Note that $P^*(n) = 1$ if and only if $n = \square$.

Throughout this paper,
if $P(n)\le 0$ then we let $f(P(n))=g(P(n))=\mu^2(P(n))=0$,
for convenience.
That is, we focus mainly on positive values of polynomials $P\in \Z[X]$.

For integers $N\ge 1$, we let $[N] := [1,N] := \{ 1,\ldots, N\}$.

\section{General CLT criteria for Rademacher and Extended Rademacher cases}
\label{general-clt}

We state the central limit theorem for the Rademacher case and the extended Rademacher case.
For a sequence $(a(m))_{m\in \N}$ and $M\ge 1$, we consider
 the $L^2$ norms
defined by 
$$|| (a_m)_{m\le M} ||_2^2:=\sum_{1\le m\le M} a(m)^{2} $$
and
$$|| a||_2^2:=\sum_{  m\ge 1} a(m)^{2}.$$

To prove a central limit theorem for partial sums of random multiplicative functions, one standard way is to verify the following criteria, which is essentially an application of McLeish central limit theorem, pioneered by Harper in \cite{Harper13}
and generalized by Soundararajan--Xu \cite[Theorem~3.1]{SX23}.
For a different application, the property of martingale difference was first observed by Basquin \cite{Basquin} and used in Lau--Tenenbaum--Wu \cite{LTW}. 
The following is the Rademacher version of \cite[Theorem~3.1]{SX23}.
\begin{theorem}[Rademacher]\label{thm: rad}
Let $a(m)$ be real and $f(m)$ be a Rademacher random multiplicative function.
Assume that $a(m)$ is supported on square-free integers $m$;
that is, $a(m)\ne 0\Rightarrow \mu^2(m)=1$.
If
there exists a subset $S=S_M\subseteq [2, M]$ such that 
    \begin{enumerate}
        \item \[\sum_{m\in [1, M] \smallsetminus S} a(m)^{2} = o\big(|| (a_m)_{m\le M} ||_2^2\big) \]
        \item \[\sum_{\substack{m_1, m_2, m_3, m_4 \in S \\ m_1m_2m_3m_4 =\square\\ P^{+}(m_1)=P^{+}(m_2) = P^{+}(m_3)=P^{+}(m_4)}}  a(m_1)a(m_2)a(m_3)a(m_4)  =  o \big( || (a_m)_{m\le M} ||_2^4 \big) \]
        \item \[\sum_{\substack{m_1, m_2, m_3, m_4 \in S \\ m_1m_2m_3m_4 =\square}}  a(m_1)a(m_2)a(m_3)a(m_4)  =  (3+o(1)) \big( || (a_m)_{m\le M} ||_2^4 \big) . \]
    \end{enumerate} 
    Then as $M\to +\infty$,
    \[ \frac{1}{   || (a_m)_{m\le M} ||_2   } \sum_{1\le m \le M} a(m) f(m) \xrightarrow[]{d} \mathcal{N}(0,1). \]
\end{theorem}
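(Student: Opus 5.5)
We will follow the martingale approach of Harper and Soundararajan--Xu \cite[Theorem~3.1]{SX23}, adapted to Rademacher variables.

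\textbf{Step 1: reduction to $S$.} Write $V_M^2 := \|(a_m)_{m\le M}\|_2^2$. Since $a$ is supported on square-free integers, $\mathbb{E}[f(m)f(m')]=1_{m=m'}$ for square-free $m,m'$. Hence
\[
\mathbb{E}\Big|\sum_{m\in[1,M]\smallsetminus S} a(m)f(m)\Big|^2=\sum_{m\notin S}a(m)^2=o(V_M^2)
\]
by condition (1). So it suffices to prove the CLT for $\sum_{m\in S}a(m)f(m)$, normalized by $V_M$. Condition (1) also gives $\sum_{m\in S}a(m)^2=(1+o(1))V_M^2$. The integer $m=1$ is excluded from $S$, so every $m\in S$ has a well-defined largest prime factor.

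\textbf{Step 2: martingale decomposition.} Group the terms according to $p=P^+(m)$:
\[
Z_p := f(p)\sum_{\substack{m\in S\\ P^+(m)=p}} a(m)f(m/p).
\]
Here $m/p$ is square-free with all prime factors $<p$. So $Z_p$ is $f(p)$ times a variable measurable with respect to $\mathcal{F}_{<p}=\sigma(f(q):q<p)$. Since $\mathbb{E}f(p)=0$ and $f(p)$ is independent of $\mathcal{F}_{<p}$, the sequence $(Z_p/V_M)_{p\le M}$ is a martingale difference sequence. Because $f(p)^2=1$, we have $Z_p^2=A_p^2$, where $A_p=\sum_{P^+(m)=p}a(m)f(m/p)$. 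We apply McLeish's CLT (as in \cite{SX23}). It requires (i) $\sum_p Z_p^2/V_M^2\to1$ in probability, and (ii) a Lindeberg-type condition, for which $\sum_p\mathbb{E}Z_p^4=o(V_M^4)$ suffices.

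\textbf{Step 3: verifying the conditions via (2) and (3).} For the Lindeberg condition, expand $\mathbb{E}Z_p^4=\mathbb{E}A_p^4$. By orthogonality of Rademacher products over square-free integers, this equals the sum over $m_1,\dots,m_4$ with $P^+(m_i)=p$ and $\prod m_i/p^4=\square$, which is equivalent to $\prod m_i=\square$. Summing over $p$ gives exactly the left side of condition (2), so $\sum_p\mathbb{E}Z_p^4=o(V_M^4)$.

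For (i) we use Chebyshev. We have $\mathbb{E}\sum_pA_p^2=\sum_{m\in S}a(m)^2=(1+o(1))V_M^2$, so it suffices to show $\mathbb{E}(\sum_pA_p^2)^2=(1+o(1))V_M^4$. Expanding,
\[
\mathbb{E}\Big(\sum_pA_p^2\Big)^2=\sum_{p,q}\sum a(m_1)a(m_2)a(m_3)a(m_4)\,1_{\frac{m_1m_2}{p^2}\frac{m_3m_4}{q^2}=\square},
\]
where $P^+(m_1)=P^+(m_2)=p$ and $P^+(m_3)=P^+(m_4)=q$. This is to be compared with the full sum in condition (3), which counts all quadruples in $S^4$ with $m_1m_2m_3m_4=\square$.

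\textbf{The main obstacle: the combinatorial comparison.} In a quadruple with square product, the largest prime among the $P^+(m_i)$ must occur an even number of times. Iterating this, the multiset $\{P^+(m_i)\}$ pairs up: either all four are equal, or they split into two pairs of equal values with distinct pair values, in one of three pairings. We write the full sum as (all four equal) plus $3T$. Here $T$ is the contribution with $P^+(m_1)=P^+(m_2)=p\ne q=P^+(m_3)=P^+(m_4)$, and the factor $3$ holds by symmetry of the summand. Then $\mathbb{E}(\sum A_p^2)^2=T+D$, where $D$ is the all-equal sum, which is $o(V_M^4)$ by (2). Condition (3) gives $3T+D=(3+o(1))V_M^4$, so $T=(1+o(1))V_M^4$ and hence $\mathbb{E}(\sum A_p^2)^2=(1+o(1))V_M^4$. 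Since $D$ is a sum over $m_i$ with common $P^+$ and square product, with real weights, the identification with (2) is exact.

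The remaining care is that no sign issues arise, because we use (2) as an equality up to $o$ rather than as an absolute bound. We also need that the square-product condition for $p\ne q$ is equivalent to $m_1m_2m_3m_4=\square$, which holds since $p^2q^2$ is a square. Combining (i) and (ii), McLeish gives the stated convergence.
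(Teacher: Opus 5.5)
Your framework is the Soundararajan--Xu martingale argument that the paper cites. Steps 1--2 are correct, as are the identification of $\sum_p\mathbb{E}Z_p^4$ with the sum in (2) and the reduction of (i) to the bound $\mathbb{E}(\sum_pA_p^2)^2\le(1+o(1))V_M^4$.

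The gap is the claim that the multiset $\{P^+(m_i)\}$ pairs up. It is false. Take $(m_1,m_2,m_3,m_4)=(10,15,2,3)$: these are square-free integers $\ge 2$ with product $900=30^2$, yet their largest prime factors are $5,5,2,3$. The iteration fails because, after discarding the two indices with the maximal value $p$, the smaller primes can still divide the two discarded $m_i$ (here $3\mid 15$), so parity forces nothing on the remaining two.

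The correct decomposition is that the sum in (3) equals $D+3T+R$. Here $R$ is the contribution of quadruples in which exactly two indices attain the maximal $P^+$ and the other two have distinct $P^+$; in your notation $R=12\sum_{r<q<p}\mathbb{E}[A_p^2Z_qZ_r]$. Since $\mathbb{E}(\sum_pA_p^2)^2=D+T$ does not contain $R$, hypothesis (3) only gives $3T=(3+o(1))V_M^4-D-R$, and $T=(1+o(1))V_M^4$ does not follow. Your remark about sign issues addresses only $D$; the real sign problem sits in $R$.

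If $a\ge 0$, as in every application in the paper (where $a=a_N$ is a counting function), the repair is immediate. Then $D,R\ge 0$ give $T\le(1+o(1))V_M^4$. The bound $\mathbb{E}(\sum_pA_p^2)^2\ge(\mathbb{E}\sum_pA_p^2)^2=(1+o(1))V_M^4$ supplies the other direction, so Chebyshev and McLeish go through. Absolute values on the off-diagonal part of (3) would also suffice, since every quadruple counted in $R$ is off-diagonal.

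For coefficients of both signs, however, (1)--(3) give no control over $R$, and the conclusion can fail for coefficients depending on $M$ (as $a_N$ does). Let $G_K=K^{-1/2}\sum_{q\in\mathcal{P}_K}f(q)$ for a set $\mathcal{P}_K$ of $K$ primes. Set $\sum_m a(m)f(m)=G_K+s_0G_K^3$, expanded multilinearly via $f(q)^2=1$, so $a$ is supported on primes and products of three distinct primes. Choose $s_0<0$ to be a root of $24s+432s^2+3240s^3+9720s^4$. This polynomial equals $\mathbb{E}(G+sG^3)^4-3(\mathbb{E}(G+sG^3)^2)^2$ for a standard Gaussian $G$; it is negative for small $s<0$ and positive as $s\to-\infty$, so such a root exists. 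Then:
(1) holds trivially;
(3) holds because $G_K\to G$ with convergence of all moments;
the sum in (2) equals $\sum_p\mathbb{E}A_p^4=O(1/K)$.
Yet the normalized sum tends to a multiple of $G+s_0G^3$, which is not Gaussian. So your argument needs an extra hypothesis such as $a\ge 0$; the missing term is a genuine obstruction, not a bookkeeping slip.
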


\begin{proof}
This is implicit in \cite[\S9]{SX23}.
It is a routine modification of \cite[Theorem~3.1]{SX23}.
\end{proof}

We next state the version for extended Rademacher RMF for our particular case. 
Fix an arbitrary polynomial $P\in \Z[X]$ with positive leading coefficient.
Given $N$, we write 
\begin{equation}
\label{core-fibers}
a(m) = a_N(m) : = \big|\big\{1\le n\le N: P(n)>0 \textnormal{ and } \co(P(n)) = m\big\}\big|. 
\end{equation}
Here $\co(\ell) := \prod_{v_p(\ell)\textnormal{ odd}} p$
is the ``reduction of the positive integer $\ell$ mod squares'',
which is sometimes (ambiguously) called the square-free part of $\ell$.
Note that if $m$ is not the core for any $P(n)$ with $1\le n \le N$, then $a(m)=0$. 

Then for an extended Rademacher $g$ and $f= \mu^2g$, we have 
\[\sum_{1\le n \le N} g(P(n)) = \sum_{m\ge 1} a(m) f(m). \]
It has mean value $a(1)$, which will be negligible in practice.
The variance is 
\[\mathbb{V}\Big[\sum_{1\le n \le N} g(P(n))\Big] =  || a ||_2^2 - a(1)^2.\]
Now we can just plug $a(m)$ into Theorem~\ref{thm: rad} to get the following statement.

\begin{theorem}[Extended Rademacher for polynomial values]\label{theorem Extended Rademacher}
Fix $P\in \Z[X]$ as above.
    Let $a(m) = a_N(m)$ be defined as above, and $g$ be an extended Rademacher random multiplicative function. Suppose there is a choice of a set $S=S_N \subseteq [2,\infty)$, such that
    \begin{enumerate}
        \item 
        \[\sum_{m\in \mathbb{Z} \smallsetminus S} a(m)^{2} = o\big( || a ||_2^2\big) \]
        \item \[\sum_{\substack{m_1, m_2, m_3, m_4 \in S \\ m_1m_2m_3m_4 =\square\\ P^{+}(m_1)=P^{+}(m_2) = P^{+}(m_3)=P^{+}(m_4)}}  a(m_1)a(m_2)a(m_3)a(m_4)  =  o \big(  || a ||_2^4 \big) \]
        \item \[\sum_{\substack{m_1, m_2, m_3, m_4 \in S \\ m_1m_2m_3m_4 =\square}}  a(m_1)a(m_2)a(m_3)a(m_4)  =  (3+o(1))|| a ||_2^4. \]
    \end{enumerate}
        Then as $N\to +\infty$, 
    \[\frac{1}{ || a ||_2} \sum_{1\le n \le N} g(P(n)) \xrightarrow[]{d} \mathcal{N}(0,1). \]
\end{theorem}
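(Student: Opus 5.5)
The plan is to deduce Theorem~\ref{theorem Extended Rademacher} from Theorem~\ref{thm: rad}, applied with $M$ large and with the weights $a(m)=a_N(m)$ of \eqref{core-fibers}. The first step is the identity
\[\sum_{1\le n\le N} g(P(n)) = \sum_{m\ge 1} a(m)f(m), \qquad f:=\mu^2 g.\]
To check it, write $P(n)=\co(P(n))k^2$. Complete multiplicativity and $g(p)^2=1$ give $g(P(n))=g(\co(P(n)))$, and since $\co(P(n))$ is square-free this equals $f(\co(P(n)))$. Grouping the $n$ according to $m=\co(P(n))$ then gives the identity. The function $f$ is a Rademacher random multiplicative function, and $a$ is supported on square-free $m$, so $a$ satisfies the support hypothesis of Theorem~\ref{thm: rad}.

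Next I choose the truncation. Since $P(n)\ll N^{\deg P}$ for $n\le N$, the weights vanish for $m>M:=C_PN^{\deg P}$. Hence $\|(a_m)_{m\le M}\|_2=\|a\|_2$ and $\sum_{m\le M}a(m)f(m)$ is the full sum. I replace $S$ by $S\cap[2,M]$, which changes none of the three sums in the hypotheses. Hypotheses (1)--(3) of Theorem~\ref{theorem Extended Rademacher} then become hypotheses (1)--(3) of Theorem~\ref{thm: rad} verbatim. The value $m=1$ lies outside $S\subseteq[2,\infty)$, so hypothesis (1) gives $a(1)^2=o(\|a\|_2^2)$.

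Theorem~\ref{thm: rad} then yields
\[\frac{1}{\|a\|_2}\sum_{m\le M}a(m)f(m)\xrightarrow{d}\mathcal N(0,1)\]
as $M\to\infty$ along the sequence $M=M(N)$. This is exactly the conclusion we want.

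The main point to check is that Theorem~\ref{thm: rad} really applies to weights $a=a_N$ that depend on the parameter. Its proof, via McLeish's martingale CLT as in \cite[Theorem~3.1]{SX23}, uses only the three stated hypotheses and never the fact that $a$ is fixed, so it holds for triangular arrays $a_M$. I would verify this in the martingale setup of \cite[\S9]{SX23}. That setup filters by the largest prime factor and requires two things: conditional variance convergence, which uses (1)--(3), and a Lindeberg-type fourth-moment condition, which is exactly (2). Neither argument needs $a$ to be independent of $M$. The small term $a(1)$ coming from $m=1$ is harmless: it is the mean of the sum and it is $o(\|a\|_2)$ by hypothesis (1), so it is absorbed.
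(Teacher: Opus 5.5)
Your proposal is correct and follows the paper's route. The paper likewise observes that $\sum_{n\le N} g(P(n))=\sum_{m} a(m)f(m)$ with $f=\mu^2 g$, and then simply plugs $a=a_N$ into Theorem~\ref{thm: rad}. Your truncation to $S\cap[2,M]$ and your remark that the McLeish argument tolerates $N$-dependent weights make explicit what the paper leaves implicit.
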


This follows directly from Theorem~\ref{thm: rad}. 
    We note that our statement is for a polynomial-valued set $\{P(n)\}_{1\le n \le N}$, but it holds for an arbitrary set $\{\alpha(n)\}_{1\le n \le N}$ with suitable and straightforward modifications.

\section{Background on Diophantine geometry and sieves}

\begin{lemma}
\label{CS2.5}
Let $N\ge 1$ and $\varepsilon > 0$.
Let $P\in \mathbb{Z}[X]$ be separable with $\deg{P} \ge 2$,
where every coefficient of $P$ has absolute value
$\le \varepsilon^{-1} N^{\varepsilon^{-1}}$.
Let $1\le a,b \le \varepsilon^{-1} N^{\varepsilon^{-1}}$ with $a\ne b$.
Then
\[
\#\{(n_1,n_2)\in [N]^2: aP(n_1) - bP(n_2) = 0\}
  \ll_{\varepsilon,\deg{P}} N^{\psi(P)+\varepsilon},
\]
where $\psi(P) := 0$ if $\deg{P}=2$ and $\psi(P) := 1/3$ if $\deg{P}\ge 3$.
\end{lemma}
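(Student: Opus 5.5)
We count integral points on the plane curve $C_{a,b}\colon aP(x)-bP(y)=0$ inside the box $[1,N]^2$. The plan is to split the curve into irreducible components and bound each component separately. Linear components give at most $O(N)$ points, so the first step is to show they cannot contain many points in the box. The remaining components are treated by Bombieri--Pila when $\deg P\ge 3$ and by Pell-type equations when $\deg P=2$.

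\emph{Step 1: Linear components.} Suppose a line $y=\lambda x+\mu$ lies on $C_{a,b}$, so that $aP(x)=bP(\lambda x+\mu)$ identically. Comparing leading coefficients gives $a=b\lambda^{d}$, where $d=\deg P$. The line can carry $\gg N^{\varepsilon}$ points of $[N]^2$ only if $\lambda>0$, so we may assume this.
\begin{itemize}
\item If $\lambda=1$ then $a=b$, which is excluded.
\item If $\lambda\neq 1$, the affine map $\sigma(x)=\lambda x+\mu$ satisfies $P\circ\sigma=(a/b)P$, so $\sigma$ permutes the roots of $P$. Since $\sigma$ has a unique fixed point, some power $\sigma^k$ with $k\le d!$ fixes every root of $P$. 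As $P$ has at least two distinct roots, $\sigma^k$ is the identity, so $\lambda^k=1$ and hence $\lambda=1$ because $\lambda>0$. This is again excluded.
\end{itemize}
Lines $x=\text{const}$ do not occur, since then $P(y)$ would have to be constant. So no component of $C_{a,b}$ is a line with positive slope. A line of negative slope meets $[N]^2$ in $O(N)$ points, but its equation has height $N^{O_\varepsilon(1)}$, so it contains $O(1)$ points unless it is rational. I expect the real argument to exclude all relevant lines by the same comparison of leading coefficients, together with the observation that $P(n_1),P(n_2)>0$ forces the same sign.

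\emph{Step 2: The case $\deg P\ge 3$.} Every remaining component has degree between $2$ and $d$.
\begin{itemize}
\item For components of degree $\ge 3$, the Bombieri--Pila bound \cite{BP} for an irreducible curve of degree $e$ gives $\ll_{e,\varepsilon} N^{1/e+\varepsilon}\le N^{1/3+\varepsilon}$ points. This bound is uniform in the coefficients, which is why the height hypothesis costs nothing here.
\item For conic components, the bound $N^{1/2+\varepsilon}$ is not good enough, so we must show that conic components containing many points do not arise or are special. I would argue as follows. If $aP(x)-bP(y)$ has an irreducible quadratic factor $Q(x,y)$, then $Q$ defines a genus-zero curve. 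The plan is to use Hooley/Evertse--Silverman-type bounds for points on conics of height $N^{O(1)}$ in a box: $\ll N^{\varepsilon}$ unless the conic is a hyperbola with rational asymptotes or a parabola. A parabola component $y=$ quadratic in $x$ would force a degree mismatch against $aP(x)=bP(y)$. A hyperbola component would factor over $\mathbb{Q}$ into lines at infinity, so its asymptotic directions would be rational slopes $\lambda$ with $a=b\lambda^d$. The idea is to rule these out as in Step 1.
\end{itemize}
This conic analysis is the main obstacle. What I would try first is the divisor-type bound: a rational hyperbola of height $N^{O(1)}$ has $\ll N^{\varepsilon}$ integral points, and a non-split conic with bounded coordinates has $\ll N^{\varepsilon}$ points, by the standard bound for Pell equations in a box. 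Either way, conics should contribute $O(N^{\varepsilon})$.

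\emph{Step 3: The case $\deg P=2$.} Complete the square, writing $P(x)=c\bigl((2cx+e)^2-D\bigr)/(4c)$. Then the equation becomes
\[
aX^2-bY^2=(a-b)D,\qquad X=2cx+e,\quad Y=2cy+e.
\]
Since $a\neq b$ and $P$ is separable (so $D\neq 0$), the right-hand side is nonzero.
\begin{itemize}
\item If $ab$ is not a square, this is a Pell-type equation. Its solutions of size $\le N^{O(1)}$ lie in $\ll N^{\varepsilon}$ orbits, using the divisor bound in $\mathbb{Q}(\sqrt{ab})$. Each orbit contributes $O(\log N)$ points, because unit powers grow exponentially.
\item If $ab$ is a square, the form factors and we get $(sX-tY)(sX+tY)=(a-b)D\,s'$ for suitable integers. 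The number of solutions is then bounded by the divisor function of a nonzero integer of size $N^{O(1)}$, which is $\ll N^{\varepsilon}$.
\end{itemize}
This gives $\psi(P)=0$ when $\deg P=2$.
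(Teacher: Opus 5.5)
The paper gives no argument of its own here. It cites Chinis--Shala \cite[Lemma~2.5]{CS25}, which rests on Bombieri--Pila \cite{BP}, and notes that the same argument gives $N^{\varepsilon}$ when $\deg P=2$. Your outline is a reasonable reconstruction of that kind of argument: split $aP(x)-bP(y)$ into components, apply Bombieri--Pila to components of degree $\ge 3$, and use norm-form/divisor bounds for conics. Your Step~3 for $aX^2-bY^2=(a-b)D$ is correct and complete, apart from the typo $P(x)=((2cx+e)^2-D)/(4c)$.

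However, the two exclusion steps in Steps~1--2 do not work as written.

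\textbf{Step 1 (lines).} The claim that a line meets $[N]^2$ in many points only if $\lambda>0$ is false; take $y=N+1-x$. Negative slopes are then left to ``I expect''. Your own argument closes this: $\sigma^k=\mathrm{id}$ forces $\lambda^k=1$, hence $|\lambda|=1$, contradicting $|\lambda|^d=a/b\neq 1$. So $aP(x)-bP(y)$ has no linear factor over $\mathbb{C}$ at all, which is the fact the paper quotes from \cite[Lemma~2.5]{WX}.

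\textbf{Step 2 (parabolas).} This is the real gap. Parabolas are exactly the conics that can carry $\asymp N^{1/2}$ integer points in a box, so they would destroy the exponent $1/3$.
Your degree-mismatch argument only handles parabolas of the shape $y=q(x)$ or $x=q(y)$. A tilted parabola $(\alpha x+\beta y)^2+\gamma x+\delta y+\epsilon=0$ with $\alpha\beta\neq 0$ is not addressed.

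The missing observation is about the top-degree form of $aP(x)-bP(y)$. It equals $c_P(ax^d-by^d)=c_Pa\prod_{\zeta^d=b/a}(x-\zeta y)$, a product of $d$ distinct linear forms. Since top forms multiply, no component can have top form $L^2$, so no component is a parabola.

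It follows that every absolutely irreducible conic component defined over $\mathbb{Q}$ is an ellipse or a hyperbola; the other components carry $O_d(1)$ rational points by B\'ezout. Such a component has height $N^{O_\varepsilon(1)}$ by Gelfond's inequality. After completing the square, it contributes $\ll N^{\varepsilon}$ points by exactly the norm-form and divisor bounds of your Step~3.

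In particular, hyperbolas with rational asymptotes need not be ruled out ``as in Step~1''. In general they cannot be: for $\deg P=2$, $a=1$, $b=4$, the whole curve is such a hyperbola. The divisor bound you mention afterwards is the right tool for them.
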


\begin{proof}
For $\deg{P}\ge 3$, this follows from Chinis--Shala \cite[Lemma~2.5]{CS25},
which builds on Bombieri--Pila \cite{BP}.
Their proof also gives the stronger $N^\varepsilon$ bound
when $\deg{P} = 2$.
\end{proof}

\begin{remark}
Lemma~\ref{CS2.5} can sometimes be improved.
Suppose $P\in \mathbb{Z}[X]$ is separable with $\deg{P} = p \ge 5$ prime.
For all $a,b\ge 1$ with $a\ne b$,
the polynomial $F(X,Y) := aP(X) - bP(Y)$
is irreducible over $\Q$,
since its leading homogeneous part $ac_PX^p - bc_PY^p$
has factorization type $(p)$ or $(p-1,1)$ over $\Q$,
and $F$ has no linear factor over $\C$ by \cite[Lemma~2.5]{WX}.
By Galois theory, $F$ is absolutely irreducible,
whence the bound $N^{1/3+\varepsilon}$ can be replaced by $N^{1/p+\varepsilon}$.
\end{remark}

\begin{lemma}
\label{weak-quadratic}
Let $P\in \mathbb{Z}[X]$ be separable with $\deg{P} = 2$.
For all $N,d\ge 1$ and $\varepsilon>0$,
we have $\#\{n\in [N]: \exists~ m\in \Z,\; P(n) = dm^2\} \ll_{P,\varepsilon} N^\varepsilon$.
\end{lemma}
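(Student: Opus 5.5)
Write $P(X)=aX^2+bX+c$ with $a>0$ and discriminant $D=b^2-4ac\neq 0$, which holds because $P$ is separable. We complete the square: $4aP(n)=(2an+b)^2-D$. Set $x=2an+b$. Then $P(n)=dm^2$ turns into
\[
x^2-4ad\,m^2=D .
\]
This is a Pell--Fermat type equation $x^2-Ey^2=D$ with $E=4ad$ and $y=m$. For $n\in[N]$ we have $|x|\ll_P N$. So it suffices to show that this equation has $\ll_{P,\varepsilon}N^\varepsilon$ solutions $(x,y)$ with $|x|\le CN$, uniformly in $d\ge1$.

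\textbf{Case 1: $E$ is a perfect square.} Say $E=e^2$. Then $(x-ey)(x+ey)=D$ with $D\ne0$ fixed. The pair $(x-ey,x+ey)$ is a factorization of $D$, so there are $O_P(1)$ such pairs, and each one determines $(x,y)$ because $e\ne0$. Hence there are $O_P(1)$ solutions.

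\textbf{Case 2: $E$ is not a square.} We work in the order $\mathbb{Z}[\sqrt E]$. The solutions split into $O_{P,\varepsilon}(E^{\varepsilon})$ classes under multiplication by units of norm $1$. To see this, each solution $\alpha=x+y\sqrt E$ generates an ideal of norm $|D|$ in an order of discriminant $\asymp E$. Ideals of bounded norm are controlled by a divisor-type bound $\ll \tau(D)\cdot(\text{local factors})\ll_\varepsilon E^\varepsilon$, the standard count of representations of $D$ by binary quadratic forms of discriminant $4E$ up to automorphs. Within a single class, the solutions are $\alpha\eta^k$, where $\eta=u+v\sqrt E>1$ is the fundamental solution of $u^2-Ev^2=1$. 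Since $|\bar\alpha|=|D|/|\alpha|$, the size of $x$ is essentially $\max(|\alpha|,|\bar\alpha|)$. As $k$ varies, $|\alpha\eta^k|$ grows geometrically with ratio $\eta\ge 1+\sqrt E>2$, so at most $O(\log N)$ values of $k$ give $|x|\le CN$. In total we get $\ll_\varepsilon E^\varepsilon\log N$ solutions.

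We must still handle large $E$, because $d$ is unrestricted. If a solution with $m\ne0$ exists, then $dm^2=P(n)\ll N^2$, so $d\ll N^2$ and $E\ll N^2$. The solution $m=0$ gives only $O(1)$ values of $n$, namely the roots of $P$. Therefore $E^\varepsilon\ll N^{2\varepsilon}$, and the bound $N^{3\varepsilon}$ follows after renaming $\varepsilon$.

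The main obstacle is the uniformity in $E$ of the class count in Case 2. I would first try the elementary route: two solutions $\alpha,\beta$ with $\alpha\equiv\beta \pmod{D}$ componentwise satisfy that $\alpha\bar\beta/D$ lies in $\mathbb{Z}[\sqrt E]$ and has norm $1$. Pigeonholing modulo $D$ then leaves at most $D^2=O_P(1)$ classes. This gives a bound independent of $E$ and avoids class-group input altogether. The counting within each class then uses only $\eta>2$.
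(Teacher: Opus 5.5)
Your proof is correct, apart from one small omission noted below, but it takes a different route from the paper.

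\textbf{The paper's route.} The paper also completes the square. It then cites Vaughan--Wooley \cite[Lemma~3.5]{VaughanWooleyActaMath1995}, a Hua-type bound of $N^\varepsilon$ for the number of solutions of a binary quadratic equation in a box, uniform in the coefficients.

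\textbf{Your route.} You prove the count directly:
\begin{itemize}
\item For square $E$, you factor $D$.
\item For non-square $E$, you use the congruence argument. If $(x_1,y_1)\equiv(x_2,y_2) \pmod{D}$, then $\alpha\bar\beta/D$ is a norm-$1$ unit of $\mathbb{Z}[\sqrt{E}]$. Hence there are at most $D^2$ orbits under $\{\pm\eta^k\}$.
\item Within an orbit, $|x|\le CN$ forces $|\alpha\eta^k|\in[\,|D|/(2CN+\sqrt{|D|}),\,2CN+\sqrt{|D|}\,]$. Since $\eta\ge 1+\sqrt{E}$, each orbit contributes $O(1+\log N/\log E)$ points.
\end{itemize}

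\textbf{Keep only the elementary route.} It is uniform in $E$. This makes your ideal-counting paragraph unnecessary, which is also the vague part: ideals in the non-maximal order $\mathbb{Z}[\sqrt{E}]$ need care. The reduction to $E\ll N^2$ becomes unnecessary too.

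\textbf{What each approach buys.} Your argument gives the stronger bound $O_P(1+\log N/\log(d+1))$ for every $d\ge 1$, square-free or not. This is the improvement the paper announces in the remark after the lemma. The paper proves it as \eqref{bound car xy} only for $X^2-\Delta$ with square-free $d$, via ideals of $\mathcal{O}_d$. After completing the square for a general $P$, the relevant $E=4ad$ is never square-free, and your congruence trick handles this directly. The citation buys brevity and robustness. Your argument buys self-containedness and a sharper, $d$-uniform logarithmic bound.

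\textbf{The omission.} You assumed $a>0$, but the lemma allows negative leading coefficient. It is in fact applied to $\sigma P$ in the proof of Lemma~\ref{mixed-2-correlation}. If $a<0$, then $E=4ad<0$ and $x^2+|E|m^2=D$ has only $O(\sqrt{|D|})=O_P(1)$ solutions, so this is trivially patched.

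\textbf{A minor point.} Within a class the solutions are $\pm\alpha\eta^k$, not only $\alpha\eta^k$. This costs only a factor of $2$.
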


\begin{proof}
After completing the square in $n$,
this follows from Vaughan--Wooley \cite[Lemma~3.5]{VaughanWooleyActaMath1995},
a result building on Hua \cite[Chapter~11]{Hua}.
\end{proof}

\begin{remark}
The bound $O_{P,\varepsilon}(N^\varepsilon)$
can be improved.
In \eqref{bound car xy}, we will prove
a bound of the shape $O_P(\frac{\log N}{1+\log d} + 1)$ for square-free $d$.
However, we prefer to defer this to when we really need it.
\end{remark}

Lemma~\ref{siegel} is a uniform version of Siegel's theorem on integral points.
It has been implicitly used before by Granville \cite[proof of Corollary~2]{GranvilleABC}.
Its proof relies on Evertse--Silverman \cite[Theorem~1(b)]{ES86},
which Gemini 3.1 Pro suggested to us in an incorrectly stated form.
After consulting related work \cite{Alpoge,BombieriGubler},
we wrote a proof using
Bombieri--Gubler \cite[Theorem~5.3.5]{BombieriGubler},
which required
genus theory such as \cite[Theorem~1]{Lemmer} or \cite[\S4]{BhargavaPlus}.
Finally, GPT-5.5 Pro simplified the proof,
using a correctly stated form of \cite[Theorem~1(b)]{ES86}.

\begin{lemma}
\label{siegel}
Let $P\in \mathbb{Z}[X]$ be separable with $\deg{P} \ge 3$.
For every $d\ge 1$,
the curve $P(x) = dy^2$ has at most $2^{O_P(1+\omega(d))}$
points $(x,y)\in \mathbb{Z}^2$.
\end{lemma}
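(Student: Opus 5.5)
The plan is to reduce the count for $P(x)=dy^2$ to counting $S$-integral points on a bounded number of curves of the form $v^2 = c\,P(x)$ with bounded coefficients, and then apply Evertse--Silverman \cite[Theorem~1(b)]{ES86}. That result bounds the number of $S$-integral solutions of $y^2 = f(x)$ uniformly. For $f$ with at least three distinct roots (genus $\ge 1$ for $\deg f\ge 3$), the bound is of the shape $C^{\#S}$ times a quantity depending only on the splitting field of $f$. Crucially, the bound does not depend on the coefficients of $f$ beyond that.

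\textbf{Reduction to $S$-integral points.} Fix the splitting data of $P$ (its discriminant, leading coefficient, and the field $K=\Q(\text{roots of }P)$). Let $S$ consist of the infinite place, the primes dividing $2\,\mathrm{disc}(P)\,\mathrm{lc}(P)$, and the primes dividing $d$. Then $\#S \le O_P(1)+\omega(d)$. Given an integer solution $(x,y)$ of $P(x)=dy^2$, set $u=x$ and $v=dy$. Then
\[v^2 = d\,P(u),\]
and $(u,v)$ is an integral, hence $S$-integral, point on the curve $v^2 = dP(u)$.

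The problem is that $d$ is a coefficient of this curve. To remove it, note that $d$ is an $S$-unit up to squares. Write $d = d_0 e^2$ with $d_0$ square-free, and observe that $d_0$ is an $S$-unit. Rescale by $v' = v/e$; this is legitimate because $e$ is also an $S$-unit, so $S$-integrality is preserved. The point then lies on $v'^2 = d_0P(u)$, where $d_0$ ranges over the $S$-units modulo squares. The twist $d_0P(u)$ has the same splitting field $K$ as $P$ and the same roots. Evertse--Silverman's bound depends only on the degree, on $K$ (through its class number or degree), and on $\#S$; in the correctly stated form it allows an arbitrary leading coefficient. So this twist is harmless, and the curve $v'^2=d_0P(u)$ has at most $C(P)^{\#S}$ $S$-integral points.

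\textbf{Counting.} The map $(x,y)\mapsto(u,v')$ is injective, and $y$ is determined up to sign by $x$. Therefore the number of integer points on $P(x)=dy^2$ is at most
\[C(P)^{O_P(1)+\omega(d)} = 2^{O_P(1+\omega(d))}.\]

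\textbf{Main obstacle.} The main obstacle is verifying the precise hypotheses and uniformity of \cite[Theorem~1(b)]{ES86}. One must check that its constant depends only on $\deg P$, the number field generated by the roots, and $\#S$, and not on the height of the coefficients or on $d_0$. If instead it requires a monic polynomial with $S$-integral coefficients, I would first make $P$ monic by the substitution $u\mapsto u/\mathrm{lc}(P)$, with the leading coefficient's primes already included in $S$, and absorb the leading coefficient into the twist. I would also handle the degree $\ge 3$ separable hypothesis: it ensures at least three distinct roots, which is exactly what Evertse--Silverman need for finiteness.
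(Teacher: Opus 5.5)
Your argument is essentially the paper's proof: send $(x,y)\mapsto(x,dy)$ to land on $z^2=dP(x)$, and take $S$ to contain $\infty$ and the primes of $d\,\mathrm{disc}(P)$, so that Evertse--Silverman \cite[Theorem~1(b)]{ES86} gives a bound depending only on a fixed field $L$ containing three roots of $P$ and on $|S|\ll_P 1+\omega(d)$. The rescaling by $e$ is unnecessary, since the bound is uniform in the coefficients and so applies to $dP$ directly; also, Theorem~1(b) only counts points with $z\neq 0$, so you must add separately the at most $\deg P$ points with $y=0$.
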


\begin{proof}
First observe that
\begin{equation*}
\#\{(x,y)\in \mathbb{Z}^2:
P(x) = dy^2\}
\le \#\{(x,z)\in \mathbb{Z}^2:
z^2 = dP(x)\},
\end{equation*}
since each pair $(x,y)$ on the left
gives rise to a pair $(x,z) = (x,dy)$ on the right.
But Evertse--Silverman \cite[Theorem~1(b)]{ES86},
with $K=\Q$,
$S = \{p\mid \textnormal{disc}(dP)\} \cup \{\infty\}$,
$f(X) = dP(X)$,
and $L\subset \C$ a number field containing three roots of $P(X)$,
shows that
\begin{equation*}
\#\{(x,z)\in \mathbb{Z}^2:
z^2 = dP(x),
\; z\ne 0\}
\le 2\cdot 7^{[L:\Q](4+9|S|)} \cdot 4^{\textnormal{rank}(\textnormal{Cl}_L[2])},
\end{equation*}
where $\textnormal{Cl}_L$ is the class group of $L$.
The field $L$ depends only on $P$, not on $d$.
Since
\begin{equation*}
|S| = 1 + \omega(\textnormal{disc}(dP))
\le 1 + \omega(d) + \omega(\textnormal{disc}(P))
\ll_P 1 + \omega(d),
\end{equation*}
the lemma immediately follows.
\end{proof}

\begin{remark}
A conjecture of Granville \cite[Conjecture~1.5(i)]{GranvilleTwists}
would imply that if $\deg{P}$ exceeds some absolute constant,
then the bound $2^{O_P(1+\omega(d))}$
in Lemma~\ref{siegel}
can be improved to $O_P(1)$, uniformly over $d\ge 1$.
GPT-5.5 Pro helped us locate this reference.
\end{remark}

We will also need results on the paucity of polynomial values with large square divisors.
We begin by recalling the following easy result.

\begin{lemma}
\label{easy-sieve}
Let \(P\in \mathbb Z[X]\) be separable with $\deg{P}\ge 2$.
If $N,M_1,M_2\ge 1$ and $\varepsilon>0$ then
\begin{equation*}
\mathcal{B}(N,M_1,M_2):= \#\{n\in [N]: \exists m\in (M_1,M_2],\; m^2\mid P(n)\}
\ll_{P,\varepsilon} \frac{N}{M_1^{1-\varepsilon}}
+ M_2^{1+\varepsilon}.
\end{equation*}
\end{lemma}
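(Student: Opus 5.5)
We bound $\mathcal{B}(N,M_1,M_2)$ by a union bound over $m\in(M_1,M_2]$, counting for each $m$ the residues $n \bmod m^2$ with $m^2\mid P(n)$. Write $\rho(k):=\#\{x\in \Z/k\Z: P(x)\equiv 0 \bmod k\}$. Since $P$ is separable, its discriminant $D:=\textnormal{disc}(P)\ne 0$. For primes $p\nmid D c_P$, Hensel's lemma gives $\rho(p^j)=\rho(p)\le \deg P$ for all $j$. For the finitely many primes dividing $D c_P$, one has $\rho(p^j)\ll_P 1$ uniformly in $j$: the roots lift with bounded multiplicity by the standard bound $\rho(p^j)\le \deg(P)\, p^{v_p(D)/2}$ or similar. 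By the Chinese remainder theorem $\rho$ is multiplicative, so $\rho(m^2)\ll_P C^{\omega(m)}\ll_{P,\varepsilon} m^{\varepsilon}$.

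We then split according to whether $m^2\le N$ or not, but it is cleaner to use the trivial bound
\[
\#\{n\in[N]: m^2\mid P(n)\}\le \rho(m^2)\Big(\frac{N}{m^2}+1\Big).
\]
Summing over $M_1<m\le M_2$ gives
\[
\mathcal{B}(N,M_1,M_2)\ll_{P,\varepsilon}\sum_{m>M_1} \frac{N m^{\varepsilon}}{m^2} + \sum_{m\le M_2} m^{\varepsilon}\ll \frac{N}{M_1^{1-\varepsilon}} + M_2^{1+\varepsilon},
\]
which is exactly the claim, after renaming $\varepsilon$.

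The only step needing care is the uniform bound on $\rho(p^{2k})$ at primes dividing the discriminant or the leading coefficient. We handle it as follows. If $p\mid c_P$ but $p\nmid$ the content, we reduce to the case of a root $x$ with $P'(x)$ of bounded $p$-adic valuation. Separability gives $A P + B P' = D$ with $A,B\in\Z[X]$, so $v_p(P'(x))\le v_p(D)$ whenever $v_p(P(x))>v_p(D)$. Hensel's lemma in its strong form then shows that each root modulo $p^{2v_p(D)+1}$ lifts to at most one root class modulo higher powers, in the appropriate sense. This yields $\rho(p^j)\le \deg(P)\,p^{O(v_p(D))}\ll_P 1$. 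If needed, we can instead simply absorb the bounded factor from these finitely many primes into the implied constant, since their contribution depends only on $P$.
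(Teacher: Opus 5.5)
Your proposal is correct and follows the paper's proof. The paper also uses a union bound over $m\in(M_1,M_2]$ with $\#\{n\in[N]: m^2\mid P(n)\}\le \rho_P(m^2)(N/m^2+1)$ and then $\rho_P(m^2)\ll_{P,\varepsilon} m^{\varepsilon}$; the one difference is that it cites Huxley's bound $\rho_P(q)\le|\textnormal{disc}(P)|^{1/2}(\deg P)^{\omega(q)}$, whereas you derive the local bound at bad primes via the strong Hensel lemma. That derivation works, with two harmless corrections: the B\'ezout identity gives $\textnormal{Res}(P,P')=\pm c_P\,\textnormal{disc}(P)$ rather than $\textnormal{disc}(P)$, and a root modulo $p^j$ with $v_p(P'(x))=s$ lies within $p^{j-s}$ of a $p$-adic root, so each $p$-adic root accounts for up to $p^{s}$ residues mod $p^j$ rather than a unique lift.
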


\begin{proof}
Let
\begin{equation}
    \label{def rhoP}
 \rho_P(q):=\#\{\nu\bmod q:P(\nu)\equiv0\bmod q\}.
\end{equation}
Since \(P\) is separable with $\deg{P}\ge 2$,
it follows from
Huxley \cite{Huxley}
that
\begin{equation}
\label{rho-bound}
 \rho_P(q)\le |\textnormal{disc}(P)|^{1/2} (\deg{P})^{\omega(q)}.
\end{equation}
(Weaker versions were known before Huxley;
see Nagell \cite[pp.~87--90]{NagellBook}.)
Therefore,
\begin{equation*}
\mathcal{B}(N,M_1,M_2) \le \sum_{M_1<m\leq M_2} \rho_P(m^2) \left(\frac{N}{m^2} + 1\right)
\ll_{P,\varepsilon} \frac{N}{M_1^{1-\varepsilon}}
+ M_2^{1+\varepsilon}.
\qedhere
\end{equation*}
\end{proof}

\begin{definition}
Let $P\in \Z[X]$.
Let $$\mathcal{B}(N,M) := \#\{n\in [N]: \exists m>M,\; m^2\mid P(n)\}.$$
Fix $\delta>0$.
If
\begin{equation}
\label{SFSC}
\mathcal{B}(N,N^{1/2}) \ll_{P,\delta} N^{1-\delta}
\end{equation}
for all $N\ge 1$,
then we say that \emph{H$\delta$ holds}.
\end{definition}

The following result builds on
work of Booker--Browning \cite[\S4]{BookerBrowning}.
We identified the proof strategy, and
GPT-5.5 Pro assisted in drafting an initial version of the proof,
with a worse exponent $\delta_0$.
We then checked and substantially revised the argument,
especially in the following aspects:
improving the treatment of the ranges $I_2$ and $I_3$ in \cite[\S4]{BookerBrowning},
and simplifying the subsequent analysis of composite square divisors.

\begin{proposition}
\label{large-squares}
Let \(P\in \mathbb Z[X]\) be separable with $\deg{P}\ge 2$.
Assume that every irreducible factor of \(P\) has degree at most \(3\).
If \(\delta_0<1/13\),
then H$\delta_0$ holds.
\end{proposition}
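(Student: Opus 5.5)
Here is the plan. We must bound $\mathcal{B}(N,N^{1/2})=\#\{n\in[N]:\exists m>N^{1/2},\ m^2\mid P(n)\}$ by $N^{1-\delta_0}$ for every $\delta_0<1/13$.

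\textbf{Reduction to one factor.} Write $P=c\prod_j P_j$ with each $P_j$ irreducible of degree $\le 3$. Since $P$ is separable, $\gcd(P_i(n),P_j(n))$ divides a fixed resultant for $i\ne j$. So if $m^2\mid P(n)$ with $m>N^{1/2}$, then for some $j$ there is $m'\gg_P m^{1/\deg P}$ with $m'^2\mid P_j(n)$. This crude loss is too large, so we refine it. We split $m=\prod_j m_j$ according to the prime supports of the factors $P_j$, up to bounded primes, with $m_j^2\mid P_j(n)$. Then $\prod_j m_j\gg N^{1/2}$, and we treat each $j$ together with the size of $m_j$ relative to $|P_j(n)|\asymp N^{\deg P_j}$.

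\textbf{Degree 1 and 2 factors.} For $\deg P_j\le 2$ we count $m_j^2\mid P_j(n)$ with $m_j\in(M,2M]$ dyadically. Lemma~\ref{easy-sieve} handles $M\le N^{1-\eta}$. For larger $M$ (only possible when $\deg P_j=2$) we write $P_j(n)=d\,k^2$ with $k\ge M$, so $d\ll N^2/M^2$. For each $d$, Lemma~\ref{weak-quadratic} gives at most $N^{\varepsilon}$ values of $n$, for a total of $N^{2+\varepsilon}/M^2\le N^{2\eta+\varepsilon}$. These contributions are all power-saving.

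\textbf{Cubic factors.} This is the main obstacle. We follow Booker--Browning \cite[\S4]{BookerBrowning}, which builds on Hooley \cite{H67} and Reuss \cite{Reuss}. Write $P_j(n)=d\,k^2$ with $k\in(M,2M]$. There are three ranges.
\begin{itemize}
\item[$(I_1)$] For $M\le N^{1-\eta_1}$, use Lemma~\ref{easy-sieve}.
\item[$(I_3)$] For $M$ near $N^{3/2}$, so that $d\le N^3/M^2$ is small, use Lemma~\ref{siegel}. Each $d$ contributes $2^{O(\omega(d))}\ll N^{\varepsilon}$ points, giving $N^{3+\varepsilon}/M^2$ in total.
\item[$(I_2)$] The middle range needs a genuinely Diophantine input. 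We view $dk^2=P_j(n)$ as counting integer points $(n,k)$ on the curves $dY^2=P_j(X)$ in a box, with $d$ varying. Following Reuss, we use the determinant method / Bombieri--Pila, uniformly over the family, via the approximate-determinant bound on the surface $dY^2=P_j(X)$ in $(d,n,k)$. This gives roughly $N^{1-\delta}$ counts with an exponent that beats the trivial bound in the window $N^{1-\eta_1}<M<N^{3/2-\eta_3}$.
\end{itemize}
We then optimise $\eta_1,\eta_3$ against the Reuss-type saving. The target is that the worst of the three ranges gives $N^{1-1/13+\varepsilon}$. I would first recheck Reuss's bound for the surface $dy^2=P(x)$ with parameters $(N,M)$ and solve the resulting linear program for the exponent.

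\textbf{Composite square divisors.} It remains to handle square divisors spread across several factors. Here $m=\prod_j m_j$ is large while each individual $m_j$ could be modest. Since $\sum_j\log m_j\ge\frac12\log N$ and there are boundedly many $j$, some $m_j$ is large relative to its share. For each $j$, it suffices to bound $\#\{n: m_j^2\mid P_j(n),\ m_j>N^{\theta_j}\}$ whenever $\sum_j\theta_j\deg$-weights allow. When some $m_j$ exceeds $N^{1-\eta}$ we apply the single-factor bounds above. Otherwise every $m_j\le N^{1-\eta}$, and we combine the conditions by CRT: $m^2\mid P(n)$ with $m\le N^{(1-\eta)\cdot\#}$. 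In that case we run Lemma~\ref{easy-sieve} directly on $P$, using the moduli $m$ restricted to the range where $m^2$ is a product of at most boundedly many factors each $\le N^{2-2\eta}$. The count is $\sum_{m>N^{1/2}}\rho_P(m^2)(N/m^2+1)$, but only over $m$ whose residues are forced into $\prod_j\rho_{P_j}(m_j^2)$ classes. I expect this to need care: the $+1$ terms must be controlled using the factorisation $n\bmod m_j^2$ separately, by counting $n$ for the largest $m_j$ only.
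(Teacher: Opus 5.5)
Your architecture matches the paper's: the Booker--Browning split into $I_1,I_2,I_3$, Lemma~\ref{easy-sieve} at the bottom, Lemma~\ref{weak-quadratic} for quadratic factors, Reuss's determinant bound for cubic factors in the middle, and Lemma~\ref{siegel} at the top. But the step that actually produces $1/13$ is never carried out: for the cubic middle range you only say you would recheck Reuss and solve a linear program. The paper takes $I_2=(N^{1-\vartheta},N^{1+\vartheta}]$ and uses $\mathcal N(N;A,B)\ll M^{2/3}N^{2/3+\varepsilon}$, valid for $(AB)^{3/11}N^{-2/11}\ll M\ll\min\{N^{1/2},A^{1/2}\}$. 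Since $A^2B\asymp N^3$ and $A\gg N^{1-\vartheta}$, the choice $M=N^{4/11+3\vartheta/11}$ is admissible for $\vartheta<1/11$ and gives $N^{10/11+2\vartheta/11+\varepsilon}$. On $I_3$, Lemma~\ref{siegel} gives $N^{1-2\vartheta+\varepsilon}$, which is already power-saving once $p>N^{1+\vartheta}$, so there is no need to stretch the Reuss range up to $N^{3/2-\eta_3}$. The range $I_1$ and the friable part (below) require $\vartheta>\delta_0$, while $I_2$ requires $\vartheta<(1-11\delta_0)/2$; a valid $\vartheta$ exists exactly when $\delta_0<1/13$.

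The more serious gap is the composite-square step. You rightly see that the $+1$ terms in $\sum_m\rho_P(m^2)(N/m^2+1)$ are fatal when $m$ runs up to a large power of $N$. However, your remedy of counting $n$ via the largest block $m_j$ only guarantees $m_j\gg N^{1/(2J)}$, where $J$ is the number of irreducible factors of $P$. The resulting saving therefore depends on $P$, and it drops below $1/13$ once $J\ge 7$. (Working with whole blocks $m_j$ would also force you to apply Reuss with non-square-free $a$, whereas the estimate is quoted for $\mu^2(a)=1$.) The missing idea is to split by the size of prime factors rather than by irreducible factors. First, run the three-range Diophantine analysis only for a single prime $p>N^\vartheta$ with $p^2\mid P(n)$, so that $P_j(n)=p^2d$ with $p$ prime. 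Second, let $m_n^2$ be the largest square dividing $P(n)$, and suppose every prime factor of $m_n$ is $\le N^\vartheta$ while $m_n>N^{1/2}$. Multiply its prime factors together (with multiplicity) one at a time to obtain $m'\mid m_n$ with $N^{1/2-\vartheta}<m'\le N^{1/2}$. Lemma~\ref{easy-sieve} with $(M_1,M_2)=(N^{1/2-\vartheta},N^{1/2})$ then gives $N^{1/2+\vartheta+\varepsilon}$. This controls both the $N/M_1$ term and the $M_2$ term, uniformly in $J$.
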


\begin{proof}
Fix \(0<\delta_0<1/13\). Choose \(\vartheta\) such that
\[
  \delta_0<\vartheta<\min\left\{\frac1{11},\,\frac{1-11\delta_0}{2}\right\}.
\]
Write
$P=c\prod_{j=1}^J P_j$
with \(c\in\mathbb Z\setminus\{0\}\) and with the \(P_j\in\mathbb Z[X]\)
primitive, pairwise non-proportional, irreducible polynomials. Let
\[
 \Sigma_2(N;\vartheta)
 :=
 \#\{N/2<n\leq N:\exists p>N^\vartheta\ {\rm prime},\ p^2\mid P(n)\}.
\]
As in \cite[\S4]{BookerBrowning}, split $(N^\vartheta,\infty)$ into the intervals
\[
I_1=(N^\vartheta,N^{1-\vartheta}],\qquad
I_2=(N^{1-\vartheta},N^{1+\vartheta}],\qquad
I_3=(N^{1+\vartheta},\infty).
\]
We will first show that
\begin{equation}
\label{large-prime-square-tail}
 \Sigma_2(N;\vartheta)\ll_{P,\delta_0}N^{1-\delta_0}.
\end{equation}
The contribution from $P(n)=0$ is trivially $O_P(1)$,
so in the proof of \eqref{large-prime-square-tail} we assume $P(n)\ne 0$ whenever convenient.
Similarly, we assume $N$ to be large whenever convenient.

The contribution from \(I_1\) is exactly the argument of
\cite[Lemma~4.1]{BookerBrowning}:
\[
 \Sigma_{2,1}\ll_P
 \sum_{N^\vartheta<p\leq N^{1-\vartheta}}
 \left(\frac{N}{p^2}+1\right)
 \ll_P N^{1-\vartheta}.
\]
Since \(\vartheta>\delta_0\), this is \(O(N^{1-\delta_0})\).
Alternatively, this would follow from our Lemma~\ref{easy-sieve}.

We next treat \(I_2\). For all sufficiently large \(N\), every
\(p\in I_2\) avoids the finitely many primes dividing \(c\), the discriminants
of the \(P_j\), and the pairwise resultants of the \(P_j\).
Hence \(p^2\mid P(n)\)
implies \(p^2\mid P_j(n)\) for some \(j\).
Since $P_j(n)\ne 0$, we have $|P_j(n)| \ge p^2$.
Thus $\deg{P_j}\ge 2$, since $N$ is large.
The contribution from a quadratic factor $P_j$ is
\begin{equation*}
\ll_{P,\varepsilon} \sum_{0<|d|\ll_P N^{2\vartheta}} N^\varepsilon
\ll_P N^{2\vartheta+\varepsilon},
\end{equation*}
where we have written $P_j(n) = p^2 d$ with $n\le N$ and $p\in I_2$,
and bounded the number of possible pairs $(n,p)$ using Lemma~\ref{weak-quadratic}.
This is \(O(N^{1-\delta_0})\), since \(\vartheta<1/11\).

It remains in the range $I_2$ to consider cubic factors $P_j$.
Dyadic decomposition,
similar to that in \cite[proof of Lemma~4.3]{BookerBrowning},
reduces the count for a cubic \(P_j\) to
\[
 \ll_{P,\varepsilon}
 N^\varepsilon
 \max_{A,B}
 \mathcal N(N;A,B),
\]
where in the notation of Reuss
\[
 \mathcal N(N;A,B)
 :=
 \#\{(n,a,b): n\asymp N,\ a\asymp A,\ b\asymp B,\ \mu^2(a)=1,\ P_j(n)=a^2b\}.
\]
Here \(a\) corresponds to \(p\), and $P_j$ is cubic, so we may assume that
\[
N^{1-\vartheta} \ll A \ll N^{1+\vartheta},
\qquad
 A^2B\asymp_P N^3.
\]
By Reuss's determinant-method estimate \cite{Reuss}, as quoted in
\cite[proof of Lemma~4.3]{BookerBrowning}, if
\[
 (AB)^{3/11}N^{-2/11}\ll M\ll \min\{N^{1/2},A^{1/2}\}
\]
then
\[
 \mathcal N(N;A,B)\ll_{P,\varepsilon} M^{2/3}N^{2/3+\varepsilon}.
\]
Using \(A^2B\asymp N^3\), we have
\[
 (AB)^{3/11}N^{-2/11}
 \asymp N^{7/11}A^{-3/11}
 \ll N^{4/11+3\vartheta/11},
\]
since \(A\gg N^{1-\vartheta}\).
Thus we may take
\[
 M=N^{4/11+3\vartheta/11};
\]
this is admissible because \(\vartheta<1/11\). Consequently
\[
 \mathcal N(N;A,B)
 \ll_{P,\varepsilon}
 N^{10/11+2\vartheta/11+\varepsilon}.
\]
Therefore
\[
 \Sigma_{2,2}
 \ll_{P,\varepsilon}
 N^{10/11+2\vartheta/11+\varepsilon}.
\]
By the choice of \(\vartheta\) and then of \(\varepsilon\), this is
\(O(N^{1-\delta_0})\).

Finally consider \(I_3\). Again, for all sufficiently large \(N\), if
\(p\in I_3\) and \(p^2\mid P(n)\), then \(p^2\mid P_j(n)\) for some \(j\).
Then \(\deg P_j=3\), since $P_j(n)\ne 0$.
Write
$P_j(n)=p^2d$.
Then $|d|\ll_P N^{1-2\vartheta}$.
Bounding the number of possible pairs \((n,p)\) using Lemma~\ref{siegel}, we get
\[
 \Sigma_{2,3}
 \ll_P
 \sum_j\sum_{0<|d|\ll_P N^{1-2\vartheta}}2^{C_P\omega(d)}
 \ll_{P,\varepsilon} N^{1-2\vartheta+\varepsilon}.
\]
Since \(2\vartheta>\delta_0\), this is also \(O(N^{1-\delta_0})\). This proves
\eqref{large-prime-square-tail}.

We are now ready to prove Proposition~\ref{large-squares}.
Let $m_n^2$ be the largest square divisor of $P(n)$,
where $m_n=\infty$ if $P(n)=0$.
Let $P^+(\infty)=\infty$.
Dyadic summation of \eqref{large-prime-square-tail} over $N$ gives
$$\#\{n\in[N]: P^+(m_n)>N^\vartheta\}\ll_{P,\delta_0} N^{1-\delta_0}.$$
It remains to count \(n\) for which
$m_n>N^{1/2}$ and $P^+(m_n)\leq N^\vartheta$.
Considering partial products of
prime factors of \(m_n\), with multiplicity, we obtain a divisor \(m'\mid m_n\) such that
\[
 N^{1/2-\vartheta}<m'\leq N^{1/2}.
\]
Since \((m')^2\mid P(n)\), it follows from Lemma~\ref{easy-sieve} that
\[
\#\{n\in[N]: m_n>N^{1/2},\ P^+(m_n)\leq N^\vartheta\}
\ll_{P,\varepsilon}
 N^{1/2+\vartheta+\varepsilon}
 \ll N^{1-\delta_0},
\]
since \(\vartheta<1/4\) and \(\vartheta>\delta_0\).
Combining the two contributions gives the proposition.
\end{proof}

\section{General correlation bounds}

We prove two lemmas that will let us
discard small sets in moment calculations.

\begin{lemma}
\label{mixed-2-correlation}
Let \(P\in \mathbb Z[X]\) be separable with $\deg{P}\ge 2$.
Let $S \subseteq [N]$.
Let $F(S)$ be the number of pairs $(n_1,n_2)\in S \times [N]$
for which $P(n_1)P(n_2)$ is a nonzero square.
Then $F(S) \ll_{P,\varepsilon} N^\varepsilon |S|$.
\end{lemma}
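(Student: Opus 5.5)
The plan is to fix the first coordinate and bound the fibres. For each $n_1\in S$, I will show that the number of $n_2\in[N]$ with $P(n_1)P(n_2)$ a nonzero square is $\ll_{P,\varepsilon} N^{\varepsilon}$, uniformly in $n_1$. Summing this bound over $n_1\in S$ then gives $F(S)\ll_{P,\varepsilon} N^\varepsilon|S|$.

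\emph{Reduction to a curve $P(x)=dy^2$.} Fix $n_1\in S$ with $P(n_1)\ne 0$; if $P(n_1)=0$ there is nothing to count. Write $P(n_1)=d\,k^2$, where $d$ is a nonzero square-free integer carrying the sign of $P(n_1)$, so $|d|=\co(|P(n_1)|)$. Since $P(n_1)P(n_2)$ is a nonzero square, $P(n_2)$ is nonzero with the same sign as $d$. Moreover $\co(|P(n_2)|)=|d|$, because the cores of the two factors must coincide for their product to be a square. Hence $P(n_2)=d\,y^2$ for some $y\in\Z$. Replacing $P$ by $-P$ when $d<0$, which changes only the implied constants, I may assume $d\ge1$. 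In every case $|d|\le|P(n_1)|\ll_P N^{\deg P}$. So it suffices to bound $\#\{n\in[N]:\exists\, y\in\Z,\ P(n)=dy^2\}$ by $N^\varepsilon$, uniformly over $1\le d\ll_P N^{\deg P}$.

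\emph{Two cases.} If $\deg P=2$, Lemma~\ref{weak-quadratic} (applied to $P$ or to $-P$) directly gives the bound $O_{P,\varepsilon}(N^\varepsilon)$, uniformly in $d$. If $\deg P\ge3$, Lemma~\ref{siegel} bounds the number of integer points on $P(x)=dy^2$ by $2^{O_P(1+\omega(d))}$. Using the standard estimate $\omega(d)\ll \log(|d|+2)/\log\log(|d|+3)$ together with $|d|\ll_P N^{\deg P}$, we get $\omega(d)\ll_P \log N/\log\log N$. Therefore
\[
2^{O_P(1+\omega(d))}\ll_{P,\varepsilon} N^{\varepsilon}.
\]
Each $n_2$ corresponds to at least one such point, so the fibre over $n_1$ is $\ll_{P,\varepsilon}N^\varepsilon$.

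There is no real obstacle here. The only points needing care are the sign convention, handled by passing to $-P$, and checking that the hypotheses of Lemmas~\ref{weak-quadratic} and~\ref{siegel} are uniform in $d$, which they are as stated.
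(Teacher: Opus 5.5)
Your proof is correct and follows the paper's route: fix $n_1$, take $d$ to be the square-free core of $\pm P(n_1)$ so that $\pm P(n_2)=dy^2$, and bound each fibre by Lemma~\ref{weak-quadratic} when $\deg P=2$ or by Lemma~\ref{siegel} when $\deg P\ge 3$. Your explicit step $\omega(d)\ll_P \log N/\log\log N$, which turns $2^{O_P(1+\omega(d))}$ into $O_{P,\varepsilon}(N^\varepsilon)$, is exactly the detail the paper leaves implicit.
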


\begin{proof}
Let $\sigma\in \{\pm 1\}$ be the sign of $P(n_1)$.
Let $d = \co(\sigma P(n_1))$.
Write $\sigma P(n_j) = dm_j^2$ where $m_j\ge 1$.
By Lemma~\ref{weak-quadratic} if $\deg{P}=2$,
or by Lemma~\ref{siegel} if $\deg{P}\ge 3$, we get
\begin{equation*}
F(S)
\ll \sum_{n_1\in S} N^\varepsilon
= N^\varepsilon |S|. \qedhere
\end{equation*}
\end{proof}

\begin{remark}
For some polynomials $P(X)$,
such as $P(X) = a Q(X)^4 \pm 1$ where $a\ge 1$ is an integer
and $Q\in \mathbb{Z}[X]$ with $\deg{Q}\ge 1$,
it is possible to improve the estimate in Lemma~\ref{mixed-2-correlation} to
$F(S) \ll_P |S|$.
The reason is that for such polynomials we may improve the
Diophantine bound in Lemma~\ref{siegel}
to $O_P(1)$, uniformly over $d\ge 1$.
See a result by Akhtari \cite{Akhtari}.
\end{remark}

\begin{lemma}
\label{mixed-4-correlation}
Let $S_i \subseteq [N]$ for $1\le i\le 4$.
Let $E(S_1,S_2,S_3,S_4)$ be the number of tuples
$(n_1,n_2,n_3,n_4)\in \prod_{1\le i\le 4} S_i$
for which $P(n_1)P(n_2)P(n_3)P(n_4)$ is a nonzero square.
Then
$$E(S_1,S_2,S_3,S_4) \le \prod_{1\le i\le 4} E(S_i,S_i,S_i,S_i)^{1/4}.$$
\end{lemma}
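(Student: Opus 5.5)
The plan is a character-sum (Fourier) argument over the group of square classes, followed by Hölder's inequality. For each $i$ and each $n\in S_i$ with $P(n)\ne 0$, the value $P(n)$ determines a class in $G:=\Q^\times/(\Q^\times)^2$. Equivalently, we record the pair $(\textnormal{sign}, \co(|P(n)|))$, which lives in the $\F_2$-vector space with basis $\{-1\}\cup\{\text{primes}\}$. A product of four nonzero integers is a nonzero square exactly when its classes sum to $0$ in $G$. Only finitely many primes occur (those dividing some $P(n)$ with $n\le N$), so we may work in a finite-dimensional $\F_2$-vector space $V$. Define $c_i(v):=\#\{n\in S_i: P(n)\neq 0,\ [P(n)]=v\}$. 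Then
\[
E(S_1,S_2,S_3,S_4)=\sum_{v_1+v_2+v_3+v_4=0} c_1(v_1)c_2(v_2)c_3(v_3)c_4(v_4).
\]
Tuples with some $P(n_i)=0$ are excluded by definition, because their product is not a nonzero square.

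Next, expand using the characters $\chi$ of $V$. These are all real, taking values $\pm1$, and orthogonality gives
\[
E(S_1,S_2,S_3,S_4)=\frac{1}{|V|}\sum_{\chi}\prod_{i=1}^4 \hat c_i(\chi),\qquad \hat c_i(\chi):=\sum_v c_i(v)\chi(v)\in\R .
\]
Specializing to $S_1=S_2=S_3=S_4=S_i$ gives $E(S_i,S_i,S_i,S_i)=|V|^{-1}\sum_\chi \hat c_i(\chi)^4$, a sum of nonnegative terms.

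Now apply Hölder's inequality with four exponents equal to $4$ to the sum over $\chi$:
\[
\Big|\sum_\chi\prod_i \hat c_i(\chi)\Big|\le \prod_i\Big(\sum_\chi \hat c_i(\chi)^4\Big)^{1/4}.
\]
Dividing by $|V|=\prod_i |V|^{1/4}$ gives $E(S_1,S_2,S_3,S_4)\le\prod_i E(S_i,S_i,S_i,S_i)^{1/4}$. Since $E(S_1,S_2,S_3,S_4)\ge0$, the absolute value causes no trouble.

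The only point needing care is setting up the finite group correctly. One must include the sign component, because $P(n)$ may be negative. One must also check that "nonzero square" corresponds exactly to the zero class, which holds because squares in $\Z$ are the nonnegative integers that are squares of rationals. The rest is the routine Gowers-type Hölder argument, so I do not expect a real obstacle.
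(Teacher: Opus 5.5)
Your proof is correct and is essentially the paper's argument. The paper writes $E(S_1,S_2,S_3,S_4)=\mathbb{E}\big[\prod_{i}\sum_{n_i\in S_i}h(P(n_i))\big]$, where $h$ is an extended Rademacher function with an independent random sign $h(-1)$, and then applies H\"older; such an $h$ is exactly a uniformly random character of your square-class space $V$, so your Fourier average over $\chi$ is the same computation written deterministically.
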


\begin{proof}
Let $g(m)$ be an extended Rademacher RMF, supported on $m\ge 1$ as usual.
We extend this to $\Z$ as follows.
Let $h(-1)\in \{\pm 1\}$ uniformly at random.
For all $m\in \Z$, let $$h(m) := g(m) 1_{m>0} + g(-m) h(-1) 1_{m<0}.$$
Then $E(S_1,S_2,S_3,S_4) = \mathbb{E}[\prod_{1\le i\le 4} \sum_{n_i\in S_i} h(P(n_i))]$.
Now use H\"{o}lder's inequality.
\end{proof}

\section{The variance (quadratic case)}

When $P$ is a polynomial of degree $2$ in $\Z[X]$ we study the distribution of points $(m,n_1,n_2)\in \N^3$ that satisfy 
\begin{equation}
    \label{be a square}
P(n_1)P(n_2)=m^2.\end{equation}
We call a solution  {\it diagonal} if it satisfies $P(n_1)=P(n_2)$ or if $P(n_1)P(n_2)=0$.

We introduce the cardinality of off-diagonal solutions
$$M_2(N;P):=\#\left\{ (m,n_1,n_2)\in \N^3\,: \begin{array}{l} 1\leq n_1,n_2\leq N,\\ P(n_1)P(n_2)=m^2\neq 0,\\  n_1 \neq  n_2 \end{array}\right\}.$$

 \begin{proposition}\label{prop N2}
Let $P$ be a separable polynomial of degree $2$ in $\Z[X]$. Then for $N\geq 2$
we have
$$M_2(N;P)\ll_P \sqrt{N}
.$$
     \end{proposition}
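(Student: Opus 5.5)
We reduce the equation to a family of Pell--Fermat equations and count solutions using the unit group of the corresponding quadratic order. Write $P(X)=aX^2+bX+c$ with $a>0$ and $D:=b^2-4ac\neq 0$, and put $x:=2an+b$, so that $4aP(n)=x^2-D$. If $P(n_1)P(n_2)=m^2\ne 0$, then $P(n_1)$ and $P(n_2)$ have the same sign $\sigma$ and the same square-free core $d$. Hence $\sigma P(n_i)=dy_i^2$ with $y_i\ge 1$, and with $\Delta:=4a\sigma d$ we get
\[
x_i^2-\Delta y_i^2=D,\qquad |x_i|\ll_P N \qquad (i=1,2).
\]
If $\Delta$ is a perfect square, the equation factors as $(x_i-\sqrt\Delta y_i)(x_i+\sqrt\Delta y_i)=D$. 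This leaves $O_P(1)$ solutions in total, so we may assume $\Delta$ is not a square. Pairs with $P(n_1)=P(n_2)$ and $n_1\ne n_2$ satisfy $n_1+n_2=-b/a$. They are therefore only the reflections of $n_1$ about the vertex $-b/(2a)$, and they contribute $O_P(1)$ because $a>0$. It remains to count pairs with $y_1\ne y_2$.

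The key algebraic step is to compose the two solutions in $\mathbb{Z}[\sqrt\Delta]$. Order them so that $x_1\le x_2$ in absolute value, with the signs of $x_1,x_2$ and of $\sqrt\Delta$ chosen so that $x_i+y_i\sqrt\Delta$ is the large conjugate. Set
\[
u+v\sqrt\Delta := (x_1+y_1\sqrt\Delta)(x_2-y_2\sqrt\Delta),\qquad u^2-\Delta v^2=D^2 .
\]
Since $|x_2-y_2\sqrt\Delta|=|D|/|x_2+y_2\sqrt\Delta|$, we get $|u+v\sqrt\Delta|\ll_P |x_1|/|x_2|\ll 1$. From $v=x_2y_1-x_1y_2$ we see that $v\ne0$ off the diagonal. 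The conjugate $u-v\sqrt\Delta$ equals $D^2/(u+v\sqrt\Delta)$. Hence $|v|\sqrt\Delta\asymp |x_2|/|x_1|$ up to $O_P(1)$ factors, and so $|u|\asymp |v|\sqrt\Delta$. Using $|x_1|\asymp y_1\sqrt\Delta$ and $|x_2|\ll N$, this yields
\[
|v|\,\Delta\, y_1\ll_P N,\qquad\text{hence}\qquad |u|\ll_P \sqrt N
\]
(since $u^2\asymp v^2\Delta\le (v\Delta)^2\ll N^2$ and, more sharply, $u^2\asymp v^2\Delta\ll N/y_1\cdot |v|$; I would extract $|u|\ll\sqrt N$ by combining this with $y_1\ge1$ and $|v|\le |v|\Delta y_1$, which still needs checking).

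The counting then proceeds as follows.
\begin{enumerate}[(i)]
\item Parametrize the off-diagonal solution by $u$. Given $u$, the identity $\Delta v^2=u^2-D^2$, together with the fact that $\Delta=4a\sigma d$ with $d$ square-free, determines $(\Delta,v)$ up to $O_P(1)$ choices: $d$ is essentially the square-free core of $u^2-D^2$.
\item Given $(\Delta,u,v)$, the point $(x_2,y_2)$ is recovered from $(x_1,y_1)$ by multiplication by $(u+v\sqrt\Delta)/D$ up to units. So it remains to bound the number of $(x_1,y_1)$ for that fixed $d$.
\item For fixed $\Delta$, the solutions of $x^2-\Delta y^2=D$ lie in $O_P(1)$ orbits under the unit group. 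The fundamental unit has size $\varepsilon_\Delta\gg\sqrt\Delta$. Hence each orbit contains $O(1+\log N/\log\Delta)$ points with $|x|\ll N$ (this is the bound recorded as \eqref{bound car xy} in the remark after Lemma~\ref{weak-quadratic}).
\end{enumerate}
Summing over $|u|\ll\sqrt N$ gives $\sqrt N$ times the average of $1+\log N/\log\Delta$. Since $\Delta\asymp u^2/v^2$, this average should be $O(1)$ once the range of small $\Delta$ is handled separately.

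The main obstacle is exactly that small-$\Delta$ range, where $v$ is large relative to $u$ and the logarithmic factor $\log N/\log\Delta$ is not bounded. There I would swap the roles of the variables. Fix $\Delta\ll N^{1/2}$ and count directly. Every solution with $|x|\ll N$ has the form $\pm\xi\,\varepsilon_\Delta^{k}$ with $\xi$ in a bounded set of $O_P(1)$ classes and $k\ll\log N/\log\varepsilon_\Delta$. So each such $\Delta$ contributes $O((\log N/\log\Delta)^2)$ pairs, and summing over $\Delta\le N^{1/2}$ gives $O(\sqrt N)$. For large $\Delta$ the $u$-parametrization applies with a bounded logarithmic factor. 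The remaining delicate point is to make the estimate $|u|\ll\sqrt N$ uniform in the sizes of $v$ and $y_1$. If that fails, I would instead use $|v|\Delta y_1\ll N$ and sum over the dyadic ranges of $\Delta$, bounding the number of $(y_1,v)$ by the Pell orbit count in (iii) rather than trivially.
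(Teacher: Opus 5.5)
\textbf{There is a genuine gap.} The step $|u|\ll_P\sqrt N$ is false, and it fails in exactly the regime your small-$\Delta$ argument does not cover. Write $X_i:=|x_i+y_i\sqrt\Delta|$. Your own computation gives $|u|\asymp|v|\sqrt\Delta\asymp X_2/X_1$, hence only $|u|\ll N/X_1\asymp N/(y_1\sqrt\Delta)$. This exceeds $\sqrt N$ as soon as $X_1<\sqrt N$.

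Here is a concrete instance. Take $P(X)=X^2-1$, $n_1=x$ and $n_2=4x^3-3x$. Then $P(n_2)=(x^2-1)(4x^2-1)^2$, so the pair is counted. Your composed element is $4\overline{\eta}^{\,2}$ with $\eta=x+\sqrt{x^2-1}$, so $|u|=4(2x^2-1)\asymp N^{2/3}$ when $x\asymp N^{1/3}$. Meanwhile $\Delta=4\,\textnormal{core}(x^2-1)\asymp N^{2/3}>\sqrt N$ whenever $x^2-1$ is square-free.

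Your fallback does not repair this. For $\Delta\asymp T$ with $\sqrt N<T\ll N$, the orbit count (iii) gives $O(1)$ admissible $y_1$ and $O(1)$ admissible $v$ for each $\Delta$. The condition $|v|\Delta y_1\ll N$ is only a size constraint, so you end up with $\sum_{\Delta\ll N}O(1)\asymp N$. Per-$\Delta$ Pell bounds cannot show that most $\Delta$ admit no solution with small $y_1$ at all.

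\textbf{The missing idea is Hooley's device.} Count the small solution through its congruence $y_1^2\mid P(n_1)$, using $\rho_P(y^2)\ll 2^{\omega(y)}$ from \eqref{rho-bound}. This is what the paper does in the regimes $n_1+m_1\sqrt d\le d$ and $|n_3+m_3\sqrt d|\le d$; the element $n_3+m_3\sqrt d$ is your $u\pm v\sqrt\Delta$ up to normalization. The complementary case then forces $d\ll\sqrt N$ via \eqref{ineg}, which is your inequality $|v|\Delta y_1\ll N$.

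Inserted into your framework, this closes the gap:
\begin{enumerate}
\item[(a)] Suppose $\Delta>\sqrt N$ and $X_1\le\sqrt N$. Then $n_1\ll\sqrt N$ and $y_1\ll X_1/\sqrt\Delta\le N^{1/4}$. So the number of pairs $(n_1,y_1)$ is $\ll\sum_{y\le N^{1/4}}\rho_P(y^2)(\sqrt N/y^2+1)\ll\sqrt N$. Each such pair fixes $d$ and leaves $O(1)$ choices of $(x_2,y_2)$ by \eqref{bound car xy}.
\item[(b)] Suppose $\Delta>\sqrt N$ and $X_1>\sqrt N$. Then $|u|\ll N/X_1<\sqrt N$ genuinely holds. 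Your steps (i)--(iii) give $O(\sqrt N)$, with $O(1)$ points per orbit since $\Delta>\sqrt N$.
\end{enumerate}
Your direct count already handles $\Delta\le\sqrt N$.
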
 

\begin{remark}
  The bound is   optimal up to constant factors,
  since we will prove\begin{equation}
        \label{eq optimal}
        M_2(N;X^2-1)\gg \sqrt{N}.
    \end{equation}
    After writing our paper,
    we learned that partial results toward
    Proposition~\ref{prop N2} have been obtained before.
    This is hinted at in \cite[\S1.4]{CS25}.
    A result for the case $P(X) = X(X+1)$ explicitly appears in
    Problem~10 of the IMC competition 2025,
    due to Besfort Shala.
\end{remark}

\begin{proof}[Proof of Proposition~\ref{prop N2}]
Before starting the proof of Proposition~\ref{prop N2}, we recall some facts about Pell--Fermat equations.
When $d\geq 2$  is square-free, we write $n^2-dm^2=\Delta$ the Pell--Fermat equation and denote by $\varepsilon_{d,\Delta}=n+m\sqrt{d}$ the smallest solution such that $n,m\ge 1$ and $\varepsilon_{d,\Delta}>\sqrt{|\Delta|}$, if it exists,
and $\eta_d$ the fundamental unit of $\Q[\sqrt{d}].$ We have $\eta_d\in {\mathcal O}_d$ the ring of integers with ${\mathcal O}_d\in \big\{\Z[\sqrt{d}], \Z\big[\frac{1+\sqrt{d}}{2}\big]\big\}$.
Following some arithmetical properties of $d$, we have $\eta_d\in \{\varepsilon_{d,1},\varepsilon_{d,-1},\tfrac12\varepsilon_{d,4},\tfrac12\varepsilon_{d,-4} \}.$
We note that $\varepsilon_{d,\Delta}\geq \sqrt{d}$ for any $\Delta$ and $\eta_d\geq\tfrac12 \sqrt{d}$.

Let  ${\mathcal I}_{\Delta,d}$ be the set of fractional principal ideals $I$ of ${\mathcal O}_d$ such that $N(I)=|\Delta|$. We have $|{\mathcal I}_{\Delta,d}|\leq \tau(\Delta)^2\ll_\Delta 1$ where $\tau(\Delta)$ is the number of divisors of $\Delta$.   For any $I\in {\mathcal I}_{\Delta,d}$, there exists $\gamma_I\in {\mathcal O}_d$ such that $\{\gamma\in I\,:\, \langle \gamma\rangle =I\}=\{ \pm \eta_d^{k_I}\gamma_I\,:\quad k_I\in \mathbb Z
\}$.
 If $n^2-dm^2=\Delta$, then
 $$n+m\sqrt{d} \in \bigcup_{I\in \mathcal I_{\Delta,d}}\{ \langle n+m\sqrt{d}\rangle =I\}.$$
 We deduce that for any $N\geq 2$ and   
 square-free $d\geq 2$
\begin{equation}
    \label{bound car xy}
\big|\big\{(m,n)\in \N^2\,:\qquad  n\leq N,\quad  n^2-dm^2=\Delta\big\}\big|\ll_\Delta \frac{\log N}{\log d}+1    .\end{equation}
In fact, we write $n+m\sqrt{d}=\gamma_I\eta_d^k$ and we impose that $\gamma_I$ is minimal with $\gamma_I\geq \sqrt{\Delta}.$
Then to prove \eqref{bound car xy}, it remains to count $k$, which is done using the lower bound $\log\eta_d\gg \log d$.

 In \cite[Theorem 1]{H84}, it is shown that when $D\geq 2$ we have 
  $$S(D):=\#\{ d\leq D\,:\quad \exists (m,n)\in \N^2,\quad n+m\sqrt{d}\leq d,
  \quad n^2-dm^2=1\}\ll \sqrt{D}(\log D)^2.$$
In $S(D)$ the contribution of perfect squares $d$ is clearly $\ll \sqrt{D}. $

 Please, see \cite{F16}, \cite{FJ12} and \cite{FJ13} for further references.
 In the same fashion,
 it is easy to show that for any fixed $\Delta\neq 0$  when $D\geq 2$ we have 
  $$S(D;\Delta):=\#\{ (m,n,d)\in \N^3\,:\quad d\leq D,\quad n+m\sqrt{d}\leq d,\quad n^2-dm^2=\Delta\}\ll_\Delta \sqrt{D}(\log D)^2,$$
  where the basic idea of the proof is to interpret the equality $n^2-dm^2=\Delta$
 as a congruence condition $n^2-\Delta \equiv 0\bmod{m^2}$.
 Although we will not directly use this result below, we will use some ideas from its proof. 
   
\medskip Now we are ready to prove Proposition~\ref{prop N2}.

Let $P(X):=aX^2+bX+c$ with $a\neq 0$. If $a<0$ we replace $P$ with $-P$,
so that we can assume $a\geq 1$. We write $P_1(X):=(2aX+b)^2-(b^2-4ac)$ and $P_2(X):=X^2-\Delta$ with $\Delta:=b^2-4ac$. Since $P_1=4aP=P_2(2aX+b)$ and $P_2(-n) = P_2(n)$, we have
$$M_2(N;P)=M_2(N;P_1)\leq 4M_2(2aN+|b|; P_2) + O_{P,\varepsilon}(N^\varepsilon),$$
by Lemma~\ref{mixed-2-correlation} applied to the set $S = \{-\frac{b}{2a}\}$ if $\frac{-b}{2a}\in \N$.
So we can restrict ourselves to $P(X)=X^2-\Delta$,
     with   $\Delta\neq 0$ since $P$ is separable.
Discarding a bounded finite number of values of $n_1$ or $n_2$ is harmless by Lemma~\ref{mixed-2-correlation}, so we assume $n_j>\sqrt{|\Delta|} $ so that $P(n_j)>0$.
     
For any solution of \eqref{be a square}, there exists a square-free integer $d\in \N$ and two non-negative integers $m_1,m_2\in \N$ such that 
$P(n_1)=dm_1^2$ and $P(n_2)=dm_2^2$; that is,
$$n_1^2-dm_1^2=\Delta,\quad n_2^2-dm_2^2=\Delta.$$ 
If $d=1$, the number of associated solutions is $\ll \tau(\Delta)^2$. So
we can then assume $d\geq 2$ square-free. 
We have
\begin{equation}    
    \label{rapport} n_2+m_2\sqrt d=\frac{ (n_3+m_3\sqrt d)(n_1+m_1\sqrt d)}{\Delta}=w_d(n_1+m_1\sqrt d)
,\end{equation}
with
$$ n_3:=n_1n_2-dm_1m_2, \quad m_3=n_1m_2-m_1n_2,\qquad w_d:=\frac{  n_3+m_3\sqrt d}{\Delta}.$$ 
We have $m_3\neq 0$ since otherwise $n_1 + m_1\sqrt{d}=n_2 +m_2\sqrt{d}.$
As $w_d>0$, by symmetry, we can also assume that $n_3m_3\geq 0.$ This implies that 
$w_d\geq \sqrt{d}/|\Delta|.$
Note that if $\Delta$ is a square, then $(n_1,m_1)=(\sqrt{\Delta},0)$ corresponds to a diagonal solution.
Since \begin{equation}
    \label{ineg}N\gg n_2+m_2\sqrt{d}= w_d(n_1+m_1\sqrt{d})\geq \frac{\sqrt{d}}{|\Delta|}(n_1+m_1\sqrt{d})\geq \frac{ d}{|\Delta|},\end{equation} we deduce $d\ll |\Delta| N.$

We count $(m_1,m_2,n_1,n_2,d)$ and we consider $ n_1+m_1\sqrt{d}$   such that 
$$n_1^2-\Delta=dm_1^2,\qquad 2^k\sqrt{d}<n_1 +m_1 \sqrt{d}\leq 2^{k+1}\sqrt{d}\leq d,\quad D:=2^j<d\leq 2^{j+1}.$$
By \eqref{ineg}, we have $D\leq N|\Delta|/2^k.$
Since $n_1-m_1\sqrt{d} = \Delta/(n_1+m_1\sqrt{d})$,
we have $m_1\asymp 2^k$, $2^k\ll D^{1/2}$ and $n_1\leq 2^{k+3/2}D^{1/2}$.

Using \eqref{bound car xy},
the contribution of $d\leq N^{1/2}$ is easily bounded by
$$\ll \sum_{d\leq N^{1/2}}\mu^2(d)\frac{(\log N)^2}{(\log d)^2+1}\ll \sqrt{N}.$$
So we can assume $  N^{1/2}<d\leq N|\Delta| $ so for any fixed square-free $d$, the number of $(n_2,m_2)$ such that $n_2^2-dm_2^2=\Delta$, $n_2 +m_2 \sqrt{d}\ll N$ is bounded.

For $k$ and $D\gg N^{1/2}$ such that $2^k\ll D^{1/2}$, the number of $(m_1,m_2,n_1,n_2,d)$ is 
$$\ll \sum_{m_1\asymp 2^k}\sum_{\substack{n_1\leq 2^{k+3/2}D^{1/2}\\ n_1^2\equiv \Delta\bmod m_1^2}}1
\ll \sum_{m_1\asymp 2^k} 2^{k+3/2}D^{1/2}\frac{\rho_P(m_1^2)}{m_1^2}\ll D^{1/2}(k+1), $$
where we used \eqref{rho-bound}. So summing $D=2^j$ such that $2^j\leq N|\Delta|/2^k $ and $k\geq 0$, we get a total contribution 
$$\ll N^{1/2}\sum_{k\ge 0} \frac{k+1}{2^{k/2}} \ll  N^{1/2}.$$ 

By the same way, we bound the number of $(m_1,m_3,n_1,n_3,d)$ such that 
$$n_3^2-\Delta^2=dm_3^2,\qquad 2^k\sqrt{d}<|n_3+m_3\sqrt{d}|\leq 2^{k+1}\sqrt{d}\leq d,\quad D:=2^j<d\leq 2^{j+1}.$$
The total contribution is again
$ \ll    N^{1/2}.$

Since we have ruled out cases $n_1+m_1\sqrt{d}\leq d$ and $|n_3+m_3\sqrt{d}|\leq d$, we obtain from \eqref{ineg} and $n_1+m_1\sqrt{d}>d$, $|n_3+m_3\sqrt{d}|> d $ the inequality $d^2\ll N|\Delta|.$
The contribution of this last case is 
$$\ll \sum_{d^2\ll N|\Delta|}\frac{(\log N)^2}{(\log d)^2+1}\ll \sqrt{N}.$$

This completes the proof of Proposition~\ref{prop N2}.
 \end{proof}

\begin{proof}[Proof of lower bound \eqref{eq optimal}]
We now prove \eqref{eq optimal}. Let $P(X):=X^2-1$. First we observe that, for any $\alpha>0,$  we have
$$M_2(N;P)\geq \#\{ d\,:\, \varepsilon_{d,1}\leq \sqrt{N}\}
\geq \#\{ d\leq N^{1/(1+\alpha)}\,:\, \varepsilon_{d,1}\leq d^{(1+\alpha)/2}\}.$$ Indeed, for such $d$, we can take $n_1+m_1\sqrt{d}= \varepsilon_{d,1}$ and $n_2+m_2\sqrt{d}= \varepsilon_{d,1}^2$ to have $P(n_1)=dm_1^2$, $P(n_2)=dm_2^2$ and  $P(n_1)\neq P(n_2).$ From Hooley \cite{H84}, there exists $\alpha_0\in (0,\tfrac12)$ such that uniformly for $1/(\alpha_0\log D)<\alpha\le \alpha_0$ and large $D$, we have 
$$\#\{ d\leq D\,:\, \varepsilon_{d,1}\leq d^{(1+\alpha)/2}\}\gg \sqrt{D}(\alpha \log D)^2.$$ This gives the required lower bound when $D=N^{1/(1+\alpha)}$ and $\alpha\asymp 1/(\alpha_0\log D).$\end{proof}

\section{The fourth moment (quadratic case)}

When $P$ is a polynomial of degree $2$ in $\Z[X]$ we study the distribution of  $(m,n_1,n_2,n_3,n_4)\in \N^5$ that satisfy 
\begin{equation}
    \label{eq 2 be a square}
P(n_1)P(n_2)P(n_3)P(n_4)=m^2.\end{equation} We call a solution  diagonal if it satisfies $P(n_i)=P(n_j)$ for some $i\neq j$ or if $m=0.$ If we assume $n_i$ or $n_j$ sufficiently large, $P(n_i)=P(n_j)$ is equivalent to $n_i=n_j$.

According to Proposition~\ref{prop N2}, the contribution of diagonal solutions such that, for instance, $P(n_1)=P(n_2)$ and $P(n_3)\neq P(n_4)$ is bounded by $O(N^{3/2}(\log N)^2).$ So the main contribution should come from the case $P(n_i)=P(n_j)$ and $P(n_k)=P(n_\ell)$ for $\{ i,j,k,\ell\}=\{ 1,2,3,4\}.$
Our aim is to show that the contribution of off-diagonal solutions is $O(N^{2-\eta})$ with some~$\eta>0$.

We introduce the cardinality of off-diagonal solutions
$$M_4(N;P):=\#\left\{ (n_1,n_2,n_3,n_4)\in [N]^4\,: \begin{array}{l} 
P(n_1)P(n_2)P(n_3)P(n_4)=\square \neq 0\\  n_i \neq  n_j\quad (1\leq i<j\leq 4) \end{array}\right\}.$$ 
\begin{proposition}
    \label{prop N4} Let $\varepsilon>0 $ and $P$ be a separable polynomial of degree $2$ in $\Z[X]$.  For $N\geq 2$, we have
    $$M_4(N;P)\ll_{P,\varepsilon} N^{3/2+\varepsilon}.$$
\end{proposition}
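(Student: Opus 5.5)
As in Proposition~\ref{prop N2}, we first reduce to $P(X)=X^2-\Delta$ with $\Delta\ne 0$. We then discard the boundedly many $n_i\le\sqrt{|\Delta|}$, using Lemma~\ref{mixed-4-correlation} together with a trivial count, so that all values $P(n_i)$ are positive.

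For a solution of \eqref{eq 2 be a square}, write $P(n_i)=d_i m_i^2$ with $d_i$ square-free. The condition $P(n_1)P(n_2)P(n_3)P(n_4)=\square$ says exactly that $d_1d_2d_3d_4$ is a square. Setting $g=\gcd(d_1,d_2)$, this forces the parametrization
\[
d_1=g a b,\quad d_2=g a' b',\quad d_3=g a b' \cdot(\ldots),
\]
or more simply: $d_1d_2=e^2 q$ and $d_3d_4=f^2 q$ with the same square-free $q=\co(d_1d_2)$.

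The plan is to split according to the size of $q$. This matches the $K_4$ heuristic of the introduction, since $q$ large means the ``cross'' gcds are small.

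\emph{Case $q$ small, say $q\le Q$.} Fix $q$. The pairs $(n_1,n_2)$ with $\co(P(n_1)P(n_2))=q$ satisfy $P(n_1)P(n_2)=q y^2$, which is a quartic surface count. I would instead sum over $(n_1,n_3)$ and note that $\co(P(n_2)P(n_4))$ is then determined. Alternatively, fix $n_1$, so that $d_1$ is fixed, and observe that $d_2=d_1q/\square$ ranges over divisor-type choices. The cleanest route is to fix $(n_1,n_3)$: then $\co(P(n_2)P(n_4))=c$ is determined, and we must count pairs $(n_2,n_4)$ with $P(n_2)P(n_4)=c\square$. That means counting $n_2,n_4$ with $d_2=g h$ and $d_4=g' h$ where $gg'=c$ up to squares. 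Each $n_2$ determines $d_2$, hence $h$ up to $\tau(c)$ choices, hence $d_4$ up to $N^\varepsilon$ choices, and then $n_4$ up to $O(N^\varepsilon)$ choices by Lemma~\ref{weak-quadratic}. This gives $N^{3+\varepsilon}$, which is too weak, so we must exploit off-diagonality. The refined idea is that the trivial count $N^{2+\varepsilon}$ for pairs per $(n_1,n_3)$-class should be replaced by a count via the Pell--Fermat structure. Writing $n_i+m_i\sqrt{d_i}$ in the quadratic fields, the diagonal $n_2=n_1,n_4=n_3$ is the main configuration.

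\emph{Reformulation via the second moment.} Define $r(c)=\#\{(n,n')\in[N]^2: \co(P(n)P(n'))=c\}$. Then the quantity to bound is $\sum_c r(c)^2$, while $\sum_c r(c)=N^2$. The diagonal contributes $r(c)\approx$ the number of pairs $\{n,n'\}$ with $d_nd_{n'}=c$ (up to squares). This count is essentially controlled by the fibers $a(m)$ of \eqref{core-fibers}, and Proposition~\ref{prop N2} says these fibers have size $1$ apart from $O(\sqrt N)$ exceptions. So off-diagonal solutions correspond to equations
\[
d_1d_2=d_3d_4\ (\text{after removing common squares}),
\]
with $\{d_1,d_2\}\ne\{d_3,d_4\}$, where all $d_i$ are cores of $P(n_i)$. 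Writing $d_1=uv$, $d_2=wx$, $d_3=uw$, $d_4=vx$ with pairwise coprimality, we get $n_1^2-\Delta=uv m_1^2$ and so on. The main step is to bound the number of $(u,v,w,x,m_i,n_i)$. By symmetry, take $u$ the largest of the four parameters. Then $n_1^2\equiv\Delta \pmod{u}$ and $n_3^2\equiv\Delta\pmod u$, so for fixed $u$, $n_1$ and $n_3$ each lie in $\rho_P(u)$ residue classes mod $u$. Similarly $n_2,n_4$ lie in classes mod $x$, and so on. Counting with \eqref{rho-bound} in the style of the proof of Proposition~\ref{prop N2}, i.e.\ viewing $n^2-\Delta=d m^2$ as $n^2\equiv\Delta \pmod{m^2}$ and dyadically decomposing in $m_i$ and $d_i$, should give
\[
\sum N^\varepsilon\Bigl(\frac{N}{u}+1\Bigr)^2\Bigl(\frac{N}{x}+1\Bigr)^2
\]
with constraints $uv,wx,\ldots\ll N^2/m_i^2$. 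This should be balanced against the bound $O(N^\varepsilon)$ per core (Lemma~\ref{weak-quadratic}) when the parameters are large.

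The main obstacle is balancing these ranges to reach the exponent $3/2$ uniformly, especially when all of $u,v,w,x$ are near $N^{1/2}$ and the $m_i$ are small. In that regime I would use the Pell--Fermat ratio trick of \eqref{rapport}: since $n_1+m_1\sqrt{d}$ and the corresponding element for another index live in related fields, a lower bound on the unit gives $d\ll N$-type inequalities, forcing one core to be $\ll N^{1/2}$. In that situation the bound \eqref{bound car xy} and summation over $d\le N^{1/2}$ give the saving.
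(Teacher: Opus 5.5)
There is a genuine gap: the decisive counting step is only asserted (``should give'', ``the main obstacle is balancing these ranges''), and as sketched it cannot reach $o(N^2)$. First, $d_1=uv$, $d_2=wx$, $d_3=uw$, $d_4=vx$ is not the general square-free solution of $d_1d_2d_3d_4=\square$. It drops $\gcd(d_1,d_2)$ and $\gcd(d_3,d_4)$; you need the seven-parameter $K_4$ parametrization \eqref{def dij}.

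Second, and more seriously, your congruence counts never use $n_i\ne n_j$, and this fails near the diagonal. Take $u,x$ bounded and $v,w\asymp N^2$, so that all $m_i\ll 1$. Choosing $n_1,n_2$ freely and then $n_3,n_4$ via Lemma~\ref{weak-quadratic} gives $N^{2+\varepsilon}$, and congruences modulo bounded $u$ or $x$ save nothing. Excluding $\{d_1,d_2\}=\{d_3,d_4\}$ via Proposition~\ref{prop N2} removes only the exact coincidences $u=x=1$ or $v=w=1$. What is needed is that $xm_4^2P(n_1)=um_1^2P(n_4)$ has $N^{o(1)}$ solutions $(n_1,n_4)$; its coefficients differ because $P(n_1)\ne P(n_4)$, so this is Lemma~\ref{CS2.5}. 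The Pell--Fermat ratio trick \eqref{rapport} cannot substitute for this. It relates two solutions with the \emph{same} core inside one field $\mathbb{Q}(\sqrt d)$, whereas here the four cores are different.

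For comparison, here is what the paper does. Lemma~\ref{lemma majCNdbeta} counts $(n,m,q)$ with $P(n)=qm^2$, $q\asymp N^\beta$ square-free and $d\mid q$. It does so either via \eqref{bound car xy} summed over multiples of $d$, or via Hooley's congruence $P(n)\equiv 0 \bmod dm^2$ with \eqref{rho-bound}, giving $N^{c(\beta,\delta)}$. Lemma~\ref{lemma maj CBalphabeta} then combines three bounds on each dyadic box:
\begin{itemize}
\item Fixing the $q_j$, whose product is a square, gives $N^{R}$.
\item Chaining the lemma through three vertices, using $dd_{ij}\mid q_j$ and $dd_{ik}d_{jk}\mid q_k$, gives $N^{S_{ijk}}$; the fourth core is then determined.
\item Choosing vertices $k,l$ by the lemma, then $d_{ik},d_{il},d_{jk},d_{jl}$ and $m_i,m_j$ trivially, and counting $(n_i,n_j)$ on $d_{jk}d_{jl}m_j^2P(n_i)=d_{ik}d_{il}m_i^2P(n_j)$ by Lemma~\ref{CS2.5}, gives $N^{T_{kl}}$. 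This is the only place where off-diagonality enters.
\end{itemize}
A linear program then shows $\min\{R,S_{ijk},T_{ij}\}\le 3/2$.

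The third bound is indispensable. Take $\alpha=0$, $\alpha_{14}=\alpha_{23}=2$ and all other $\alpha_{ij}=0$, which is exactly your regime above. There $R=4$ and every $S_{ijk}=2$, while $T_{14}=T_{23}=1$.
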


\begin{proof}
As in the proof of Proposition~\ref{prop N2}, we can assume that $P(X)=X^2-\Delta$ with $\Delta\neq 0.$
This time, Lemma~\ref{mixed-4-correlation} lets us discard a bounded finite number of values of $n_j$, at the price of introducing an error term of $O_P(N^{3/2})$,
which is acceptable.

Moreover, we assume $n_j\ge N_0$, say, so that $P(n_j)>0$.
We introduce $q_1$, $q_2$, $q_3$, $q_4$ square-free integers and $m_1$, $m_2$, $m_3$, $m_4$ positive integers such that
$$P(n_j)=q_jm_j^2$$
and $q_1q_2q_3q_4$ is a square.

The following  lemma is a bound of the cardinality
\begin{equation}
    \label{eq CBbeta}C(N,d;\beta):=\# \left\{ (m,n,q)\in \N^3\,:\begin{array}{l}  q\asymp N^{\beta},\, n\leq N,\cr d\mid q,\quad \mu^2(q)=1,\quad P(n)=qm^2\end{array}\right\} .\end{equation}
    Let  $\delta :=(\log d)/\log N$.
We use the notation $$c(\beta,\delta):=\min\big\{ \beta-\delta, \max\{ \tfrac 12\beta-\delta,1- \tfrac 12\beta\}\big\}
=\begin{cases}
    \tfrac 12\beta-\delta & \text{if $1+\delta\leq \beta\leq 2,$}\\
     1-\tfrac 12\beta & \text{if $\tfrac23(1+\delta)\leq \beta\leq 1+\delta,$}\\
      \beta-\delta & \text{if $0\leq \beta\leq \tfrac 23(1+\delta).$}\\
\end{cases}$$

\begin{lemma}\label{lemma majCNdbeta}Let $P$ be a separable polynomial of degree $2$ in $\Z[X]$.  
    For $N\geq 2$, $d\ge 1$, $\beta\in [0,2]$, and $\varepsilon>0$,
    \begin{equation}
    \label{bound CBbeta}C(N,d;\beta) \ll d^\varepsilon N^{c(\beta,\delta)} (\log N)^2 .\end{equation}
    In particular, we have the following sharpening of
    Proposition~\ref{large-squares} when $\deg{P}=2$:
    \begin{equation}
        \label{large square deg=2}
    \#\{ N_0\le n\leq N \,:\, P(n)/\textnormal{core}(P(n))>N^{2\vartheta}\}\ll N^{\max\{ 2/3,1-\vartheta\}}(\log N)^2.\end{equation}
\end{lemma}

 \begin{remark}
    For $d=1$ and $P(X)=X^2-1$, bound \eqref{bound CBbeta} can be improved using \cite[theorem 1]{FJ13}. For any $\varepsilon>0$, it was proved that  $c(\beta,0)+\varepsilon$ can be replaced by $\tfrac13+\tfrac{5}{12}\beta$ if $\tfrac47\leq \beta\leq \tfrac8{11}.$
\end{remark} 
\begin{proof}
We write $\beta $ so that $q \asymp N^{\beta} $ and assume $q$ square-free.
Counting  $(n,m)$ for any $q\ll N^{\beta} $, $d\mid q$ via \eqref{bound car xy}, we obtain $$C(N,d;\beta) \ll N^{ \beta }(\log N)/d\ll N^{ \beta-\delta }(\log N).$$

If $\beta\geq \tfrac23(1+\delta),$ we use Hooley's method.
Counting $(n,q')$ such that $P(n)\equiv 0 (\bmod\, dm^2)$ and $q=q'd$ and using \eqref{rho-bound}, we get 
\begin{align*}
 C(N,d;\beta)&\ll \sum_{2^k\leq N} \sum_{m\asymp N^{1-\beta/2}/2^k}\sum_{\substack{\Omega\bmod dm^2\\ P(\Omega)\equiv 0\bmod dm^2 }}\sum_{\substack{n\equiv \Omega\bmod dm^2\\ n\ll N/2^k}}1\cr&\ll \sum_{2^k\leq N} \sum_{m\asymp N^{1-\beta/2}/2^k}\rho_P(dm^2)\Big( \frac{N}{2^kdm^2}+1\Big)
 \cr&\ll 2^{\omega(d)}\sum_{2^k\leq N} \sum_{m\asymp N^{1-\beta/2}/2^k}2^{\omega(m)}\Big( \frac{N}{2^kdm^2}+1\Big)\cr&\ll  d^\varepsilon N^{\max\{  \beta/2-\delta,1-  \beta/2\}} (\log N)^2 .
 \end{align*}
This finishes the proof of \eqref{bound CBbeta}.

To deduce \eqref{large square deg=2} from \eqref{bound CBbeta}, we write $P(n)=qm^2$ with $\mu^2(q)=1$. 
We count $n\in (\tfrac12 N,N]$ such that $P(n)\asymp N^2.$ The counting for $n\le N$ can be easily deduced.
We have  $q\ll N^{2-2\vartheta}$ so that  
we get 
$\beta\leq 2-2\vartheta+O(1/\log N).$ 

We can reduce to $O(\log N)$ values of $\beta$. 
 For $\beta\le\tfrac23$ one has $c(\beta,0)\le\beta$ and a summation over $\beta$ gives the required bound. For $\tfrac23<\beta\le1$, we use   $c(\beta,0)\le1-\tfrac 12\beta$. For $1<\beta\le2-2\vartheta$ one has $c(\beta,0)=\tfrac12\beta \le1-\vartheta$.  This gives \eqref{large square deg=2}.
%
 %
\end{proof}

We parameterize $q_j$ by $d,$ $\{d_{ij}\}_{1\leq i<j\leq 4}$ such that $\mu^2(dd_{12}d_{13}d_{14}d_{23}d_{24}d_{34})=1$ and 
\begin{equation}
    \label{def dij}
 q_1=dd_{12}d_{13}d_{14} ,\,
    q_2=dd_{12}d_{23}d_{24} ,\,
    q_3=dd_{13}d_{23}d_{34} ,\,
    q_4=dd_{14}d_{24}d_{34} . \end{equation} 
    We have $d=\gcd_j (q_j)$ and $d_{ij}=\gcd (q_i/d,q_j/d)$. 
To get a clearer picture, we introduce $\alpha\geq 0$ and $\alpha_{ij}\geq 0$ such that $d\asymp N^\alpha$ and $d_{ij}\asymp N^{\alpha_{ij}}$. We write $\bfalpha=(\alpha, \alpha_{12},  \alpha_{13},  \alpha_{14},  \alpha_{23},  \alpha_{24},  \alpha_{34})$ and
$\bfbeta=(\beta_1,\beta_2,\beta_3,\beta_4)$.
For simplicity, we impose the following
\begin{equation}
    \label{rel alpha beta}
\begin{cases}
    \beta_1=\alpha +\alpha_{12}+\alpha_{13}+\alpha_{14} ,\\ 
    \beta_2=\alpha +\alpha_{12}+\alpha_{23}+\alpha_{24} ,\\
    \beta_3=\alpha +\alpha_{13}+\alpha_{23}+\alpha_{34} ,\\
    \beta_4=\alpha +\alpha_{14}+\alpha_{24}+\alpha_{34} .\end{cases}\end{equation}

We consider
    $$
    C(N;\bfalpha,\bfbeta):=\#\left\{(\bfm,\bfn,\bfq)\in \N^{12}:\begin{array}{l} d,(d_{ij})_{1\leq i<j\leq 4} \text{ as in } \eqref{def dij} \text{ with } d\asymp N^\alpha,\, d_{i,j}\asymp N^{\alpha_{ij}}\cr q_j\asymp N^{\beta_j},\quad n_j\leq N,\quad \mu^2(dd_{12}d_{13}d_{14}d_{23}d_{24}d_{34})=1,\cr P(n_j)=q_jm_j^2,\, P(n_i)\neq P(n_j)\quad (1\leq i<j\leq 4) \end{array}\!\!\right\}.$$

    The following lemma is the key stage in our argument.
\begin{lemma}
    \label{lemma maj CBalphabeta} Let $\varepsilon>0$, $P$ be a separable polynomial of degree $2$ in $\Z[X]$.
    For any $\bfalpha,\bfbeta$ such that \eqref{rel alpha beta} and $\bfbeta\in [0,2]^4$, and $N\geq 2$, we have 
\begin{equation}
    \label{bound C}
C(N;\bfalpha,\bfbeta)\ll N^{3/2+\varepsilon}.\end{equation}
\end{lemma}

\begin{proof}
We give three individual Diophantine estimates,
and numerically optimize the resulting exponents over all dyadic ranges.
The individual Diophantine estimates are due to us.
From there, we originally derived a suboptimal bound of $N^{9/5+\varepsilon}$, which sufficed for our main CLT results.
GPT-5.5 Pro then showed us that the exponent could be improved to $3/2+\varepsilon$ by numerical optimization (linear programming).

For notational convenience, let $\alpha_{ij}=\alpha_{ji}$.
Using the piecewise linear function $c(\beta,\delta)$ defined before
Lemma~\ref{lemma majCNdbeta}, let
\begin{align}
R &:= (\beta_1+\beta_2+\beta_3+\beta_4)/2, \\
S_{ijk} &:=
c(\beta_i,0)
+ c(\beta_j,\alpha+\alpha_{ij})
+ c(\beta_k,\alpha+\alpha_{ik}+\alpha_{jk}),
\label{def ebeta} \\
T_{ij} &:=
c(\beta_i,0)
+ c(\beta_j,\alpha+\alpha_{ij})
+ 2-R+(\beta_i+\beta_j)/2,
\end{align}
where we emphasize that $S_{ijk}$ and $T_{ij}$ depend on the order of $i,j,k$.
We claim that
\begin{equation}
C(N;\bfalpha,\bfbeta)\ll N^{E+\varepsilon},
\end{equation}
where
\begin{equation}
E := \min\left\{R,
\min_{\substack{i,j,k\in [4]\\ \mathrm{distinct}}} S_{ijk},
\min_{\substack{i,j\in [4]\\ \mathrm{distinct}}} T_{ij}
\right\}.
\end{equation}

\textbf{First bound.}
If we fix  $q_j$, then the product $q_1q_2q_3q_4\ll N^{\beta_1+\beta_2+\beta_3+\beta_4}$ is a square. By~\eqref{bound car xy}, the number of $(n_1,\ldots,n_4)$ is $\ll (\log N)^4$. So, the contribution is
$$C(N;\bfalpha,\bfbeta)\ll\sum_{q^2\ll N^{\beta_1+\beta_2+\beta_3+\beta_4}}\tau_4(q^2)(\log N)^4
\ll N^{(\beta_1+\beta_2+\beta_3+\beta_4)/2}(\log N)^{13}.$$ Here we use
$\tau_4(p^2)=10.$
Thus $C(N;\bfalpha,\bfbeta)\ll N^{R+\varepsilon}$.

\textbf{Second bound.}
If $q_i$ is fixed, to count $q_j$ we have the additional condition $dd_{ij} \mid q_j$.
If $q_i$ and $q_j$ are  fixed, to count $q_k$ we have the additional condition $dd_{ik}d_{jk} \mid q_k.$ 
Introducing \eqref{def ebeta}
by Lemma~\ref{lemma majCNdbeta},
we obtain
$C(N;\bfalpha,\bfbeta)\ll N^{S_{ijk}+\varepsilon}$
for any $\varepsilon>0$,
by the divisor bound.

\textbf{Third bound.}
Let $(i,j) = (4,3)$.
We use Lemma~\ref{lemma majCNdbeta}
to first choose $(q_4,n_4,m_4)$,
then $(q_3,n_3,m_3)$.
Using the divisor bound, we then choose $d_{1t},d_{2t}$ for $t=3,4$.
Finally, trivially choose $m_1\ll N^{1 -\beta_1/2 }$ and $m_2\ll N^{1 -\beta_2/2 }$.
The pair $(n_1,n_2)$
satisfies $$d_{23}d_{24}m_2^2 P(n_1)-d_{13}d_{14}m_1^2 P(n_2)=0,$$
where the coefficients in front of $P(n_1),P(n_2)$ are $\ll N^{O(1)}$.
We have $d_{23}d_{24}m_2^2 \neq d_{13}d_{14}m_1^2 $ since $P(n_1)\neq  P(n_2).$
The number of such pairs is then $\ll N^\varepsilon$
by Lemma~\ref{CS2.5}.
We get
$C(N;\bfalpha,\bfbeta)\ll N^{T_{43}+\varepsilon}$,
since $2-(\beta_1+\beta_2)/2 = 2-R+(\beta_4+\beta_3)/2$.

It remains only to prove that \(E\le 3/2\).
This is done in Appendix~\ref{code}.
\end{proof}

\begin{remark}
Gemini 3.1 Pro has a short proof that $E\le 21/11$.
First, $c(\beta, \delta) \le 1/2 + \beta/4 - \delta/2$ for $\beta \le 2$ and $\delta \ge 0$.
Averaging over $i,j,k$ using $\mathbb{E}[\beta_i] = R/2$
and $\mathbb{E}[\alpha_{ij}] = R/6 - \alpha/3$ gives
$\mathbb{E}[S_{ijk}] \le 3/2 + R/8$
and
$\mathbb{E}[T_{ij}]\le 3 - R/3$,
since $\alpha\ge 0$.
The worst case is $R = 36/11$.
\end{remark}

Lemma~\ref{lemma maj CBalphabeta} implies Proposition~\ref{prop N4} since the number of different $\alpha,$ $\bfalpha$ and $\bfbeta$ satisfying~\eqref{rel alpha beta} is bounded  by $O((\log N)^{7}).$\end{proof}

\section{Polynomials of degree two}
In this section, we derive a central limit theorem for random completely multiplicative functions with polynomial phase $P(n)$, where the polynomial  $P $ is any given polynomial belonging to the family of polynomials that we studied in the previous section.

Our work in the previous two sections now allows us to establish the following central limit theorem for random completely multiplicative functions along polynomial phases.

\begin{theorem}\label{thm: CLT}
   Let $P$ be a separable polynomial of degree $2$ in $\Z[X]$ with positive leading coefficient. Let $g$ be an extended Rademacher RMF. Then as $N\to +\infty$,
  \begin{equation}
      \label{eq CLT}
  \frac{1}{\sqrt{N}} \sum_{1\leq n \leq N} g(P(n)) \xrightarrow{d} \mathcal{N}(0,1).
  \end{equation}
\end{theorem}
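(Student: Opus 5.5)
We apply Theorem~\ref{theorem Extended Rademacher} with $a(m)=a_N(m)$ as in \eqref{core-fibers}. The plan is to show $\|a\|_2^2 = N+O(\sqrt N)$ and then verify the three criteria for a suitable set $S$.

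\textbf{Variance.} The quantity $\|a\|_2^2$ counts pairs $(n_1,n_2)\in[N]^2$ with $P(n_1),P(n_2)>0$ and $\co(P(n_1))=\co(P(n_2))$, that is, $P(n_1)P(n_2)$ a nonzero square. Here $P(n)>0$ holds for all but $O_P(1)$ values of $n$, since the leading coefficient is positive.
\begin{itemize}
\item The diagonal pairs $n_1=n_2$ contribute $N+O_P(1)$.
\item Pairs with $n_1\ne n_2$ and $P(n_1)=P(n_2)$ contribute $O_P(1)$, since for large $n$ this forces $n_1=n_2$.
\item All remaining off-diagonal pairs are counted by $M_2(N;P)\ll_P\sqrt N$ (Proposition~\ref{prop N2}).
\end{itemize}
Hence $\|a\|_2^2=N+O(\sqrt N)$. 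The mean $a(1)$ counts $n$ with $P(n)=\square$, and Lemma~\ref{weak-quadratic} with $d=1$ gives $a(1)\ll N^{\varepsilon}$. So the normalization by $\sqrt N$ agrees with $\|a\|_2$ up to $1+o(1)$, and the mean is negligible.

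\textbf{Choice of $S$ and criterion~(1).} Let $S$ be the set of $m\ge 2$ with $P^+(m)>N^{\eta}$ for a small fixed $\eta>0$. Roughly, we need that few $n$ have $\co(P(n))$ either equal to $1$ or $N^{\eta}$-smooth.
\begin{itemize}
\item By Lemma~\ref{mixed-2-correlation}, the contribution to $\sum_{m\notin S}a(m)^2$ from any set $T$ of $n$ is at most $N^{\varepsilon}|T|$. So it suffices to show $\#T\ll N^{1-\varepsilon'}$ for the set $T$ of bad $n$.
\item Write $P(n)=qm^2$ with $q$ squarefree. If $q$ is $N^{\eta}$-smooth, either $q$ is small, say $q\le N^{2-2\vartheta}$ so that the square part is large, or $q$ is a large smooth number.
\item The first case is handled by \eqref{large square deg=2}.
\item The second case is a smooth-value count for $q=P(n)/m^2$ with $m$ small. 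I would use Lemma~\ref{lemma majCNdbeta} to control fibres, together with a standard bound: the number of $n\le N$ with $P(n)/m^2$ being $N^{\eta}$-smooth is $o(N^{1-\varepsilon})$ for $\eta$ small, say via a Rankin-type argument or by showing $P(n)$ has a prime factor $>N^{\eta}$ to the first power for most $n$.
\end{itemize}
If a power saving for smooth values is hard to get, I would instead take $S$ via a second-moment bound. Since $a(m)\le N^{\varepsilon}$ by Lemma~\ref{weak-quadratic}, it suffices that the number of bad $n$ is $o(N^{1-\varepsilon})$. This is the step I expect to be the main technical obstacle.

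\textbf{Criteria (2) and (3).} The full fourth moment counts $(n_1,\dots,n_4)$ with $\prod P(n_i)=\square\ne0$ and all $n_i\in$ preimages of $S$.
\begin{itemize}
\item Tuples with two coincident pairs $\{n_1,n_2,n_3,n_4\}$ contribute $3N^2+O(N)$, the same count as in the Gaussian fourth moment.
\item Tuples with exactly one coincidence, say $n_1=n_2$ (so $P(n_3)P(n_4)=\square$ with $n_3\ne n_4$), contribute $O(N\cdot M_2)=O(N^{3/2})$ by Proposition~\ref{prop N2}.
\item Fully off-diagonal tuples contribute $M_4(N;P)\ll N^{3/2+\varepsilon}$ by Proposition~\ref{prop N4}.
\item Restricting to $S$ changes the count by $o(N^2)$, using Lemma~\ref{mixed-4-correlation} together with criterion~(1).
\end{itemize}
This gives~(3). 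For~(2), the condition $P^+(m_1)=\dots=P^+(m_4)$ restricts the diagonal terms $m_1=m_2,\ m_3=m_4$ to pairs with $P^+(m_1)=P^+(m_3)=p>N^{\eta}$. Their weight is bounded by $\sum_p\big(\sum_{P^+(m)=p}a(m)^2\big)^2$, and I would bound this by $\max_p\sum_{P^+(m)=p}a(m)^2\cdot\|a\|_2^2$. The inner sum is at most $N^{\varepsilon}\#\{n\le N:p\mid P(n)\}\ll N^{1+\varepsilon}/p\le N^{1+\varepsilon-\eta}$, using \eqref{rho-bound}. The off-diagonal terms are already $o(N^2)$. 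This gives~(2) and completes the proof.
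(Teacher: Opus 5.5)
The variance computation and your handling of the off-diagonal terms in criteria (2) and (3) are fine. The gap is your choice $S=\{m\ge 2: P^+(m)>N^{\eta}\}$ with $\eta>0$ fixed: it makes criterion (1) false, not merely hard to prove. Since $a(m)\ge 1$ whenever $a(m)\ne 0$,
\[\sum_{m\notin S}a(m)^2\ \ge\ \#\{n\le N:\ P^+(\co(P(n)))\le N^{\eta}\}\ \ge\ \#\{n\le N:\ P^+(P(n))\le N^{\eta}\}.\]
The right-hand side is expected to be $\gg_\eta N$, a Dickman-type proportion. For $P(X)=X(X+1)$ a positive proportion is known unconditionally (Hildebrand's theorem on consecutive smooth numbers, answering Balog). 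So the ``standard bound'' $o(N^{1-\varepsilon})$ for $N^{\eta}$-smooth values that you invoke does not exist; Rankin-type arguments only give $c(\eta)N$. Your fallback via $a(m)\le N^{\varepsilon}$ needs the same false estimate. Since $\|a\|_2^2\sim N$, criterion (1) fails.

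To repair this, the smoothness threshold must grow more slowly than every power of $N$, and you must then remove the $N^{\varepsilon}$ loss in your bound for criterion (2). For instance, use $\log N$ in place of $N^{\eta}$. Cores with $P^+\le\log N$ divide $\prod_{q\le \log N}q$, so there are at most $2^{\pi(\log N)}=N^{o(1)}$ of them. Each has $a(m)\ll N^{\varepsilon}$ by Lemma~\ref{weak-quadratic}, which gives criterion (1). For (2), write $\sum_{P^+(m)=p}a(m)^2=A_p+B_p$. Here $A_p=\#\{n\le N: P^*(P(n))=p\}\le \#\{n\le N: p\mid P(n)\}\ll N/p+1$ is the contribution of $n_1=n_2$, and $\sum_p B_p\le M_2(N;P)\ll\sqrt N$ by Proposition~\ref{prop N2}. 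Then $\sum_{p>\log N}(A_p+B_p)^2\ll N\max_{p>\log N}A_p+N\ll N^2/\log N$.

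The paper instead keeps $S=\{\co(P(n)): n\le N\}\smallsetminus\{1\}$, so its criterion (1) is just $a(1)^2=o(N)$. It then bounds the diagonal $\sum_p A_p^2$ over all primes, using friable-number estimates (Drappeau) for $p\le \mathcal{L}(N)^2$. That route costs more, but it gives the quantitative bound $N^2\mathcal{L}(N)^{-2+o(1)}$ needed for the thin-set remark.
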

 By taking $a=a_N$ in Theorem~\ref{theorem Extended Rademacher}, we deduce the following proposition.

\begin{proposition}[Deduced from the McLeish central limit theorem]\label{prop: mcleish}
Theorem~\ref{thm: CLT} holds if the following is true,
where the notation $P^*$ is defined in \eqref{def P*}.
\begin{align*}
&  (1)\qquad\#\left\{( n_1, n_2 )\in [N]^{2}: \begin{array}{l}P(n_1)P(n_2)  = \square \neq 0\cr n_1\neq n_2 \end{array}  \right\} = o_{N\to +\infty}(N ), \cr
  &  (2)\qquad\#\left\{( n_1, n_2, n_3, n_4)\in [N]^{4}: \begin{array}{l}P(n_1)P(n_2) P(n_3)P(n_4)= \square \neq 0\cr n_i\neq n_j\quad (1\leq i<j\leq 4)\end{array}  \right\} = o_{N\to +\infty}(N^{2}), \cr
   & (3) \qquad
\# \left\{ ( n_1, n_2, n_3, n_4) \in [N]^{4}:  \begin{array}{l}
 P^{*}(P(n_i)) = P^{*}(P(n_j)) \quad (1\leq i<j\leq 4),\\   P(n_1)P(n_2) P(n_3)P(n_4)   =\square \neq 0\end{array}\right\} =o(N^{2}).     
\end{align*}
\end{proposition}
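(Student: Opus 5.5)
We apply Theorem~\ref{theorem Extended Rademacher} with $a=a_N$ as in \eqref{core-fibers} and the choice $S=S_N:=[2,\infty)$. We need three things: an asymptotic for $\|a\|_2^2$, a check of conditions (1)--(3) of that theorem from hypotheses (1)--(3) of the proposition, and a final normalization. Throughout, only $O_P(1)$ values of $n$ have $P(n)\le 0$, and we discard them. The basic dictionary is that, for $n_1,n_2$ with $P(n_i)>0$,
\[
\co(P(n_1))=\co(P(n_2)) \iff P(n_1)P(n_2)=\square\neq 0.
\]

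\emph{Step 1: the variance.} By the dictionary, $\|a\|_2^2=\sum_m a(m)^2$ counts pairs $(n_1,n_2)\in[N]^2$ with $P(n_1),P(n_2)>0$ and $P(n_1)P(n_2)=\square$. The diagonal $n_1=n_2$ contributes $N-O_P(1)$. The pairs with $n_1\ne n_2$ contribute $o(N)$ by hypothesis (1). Hence $\|a\|_2^2=N+o(N)$.

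\emph{Step 2: conditions (1) and (2) of Theorem~\ref{theorem Extended Rademacher}.} The complement of $S$ among the relevant $m$ is $\{1\}$, so condition (1) asks for $a(1)^2=o(N)$. Here $a(1)=\#\{n\le N: P(n)=\square\}\ll_{P,\varepsilon}N^\varepsilon$ by Lemma~\ref{weak-quadratic} with $d=1$, which is more than enough. For condition (2), write $m_i=\co(P(n_i))$. Then $P^+(m_i)=P^*(P(n_i))$ by \eqref{def P*}, and $m_1m_2m_3m_4=\square$ iff $P(n_1)\cdots P(n_4)=\square\ne0$. So the sum in (2) is at most the count in hypothesis (3). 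That count is $o(N^2)=o(\|a\|_2^4)$ by Step 1.

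\emph{Step 3: condition (3) of Theorem~\ref{theorem Extended Rademacher}, the main step.} The sum equals the number $E^*$ of $(n_1,\dots,n_4)\in[N]^4$ with $P(n_1)\cdots P(n_4)=\square\ne0$ and every $\co(P(n_i))\ge2$. First drop the restriction on cores, giving a count $E$. The tuples removed have some $n_i$ in $S_1:=\{n:P(n)=\square\}$. By Lemma~\ref{mixed-4-correlation} with one factor equal to $S_1$ and the others $[N]$, together with $E([N]^4)\le N^4$, the removed tuples number at most
\[
4E(S_1,S_1,S_1,S_1)^{1/4}E([N],[N],[N],[N])^{3/4}\ll |S_1|\,N^{3}\ll N^{3+\varepsilon}.
\]
This is not $o(N^2)$, so we use the bound in a second pass. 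Once $E([N]^4)\ll N^2$ has been established (which the following decomposition yields), the same inequality gives $\ll N^{\varepsilon}N^{3/2}=o(N^2)$.

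Now decompose $E$.
\begin{itemize}
\item Tuples with all $n_i$ distinct contribute $o(N^2)$ by hypothesis (2).
\item Tuples with some coincidence are covered by the six events $n_i=n_j$. Suppose $n_1=n_2$. Then the square condition becomes $P(n_3)P(n_4)=\square\ne0$, so this event has size $N\cdot\|a\|_2^2+O(N)=N^2+o(N^2)$. The same holds for each of the three perfect matchings $\{ij\}\{kl\}$ of $[4]$. The two events of one matching (for instance $n_1=n_2$ and $n_3=n_4$) are identical as conditions on the square, so they are counted once.
\item Intersections of events from different matchings, such as $n_1=n_2=n_3$, reduce to $P(n_1)P(n_4)=\square$. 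They therefore have size $\le\|a\|_2^2\ll N$.
\end{itemize}
Inclusion--exclusion then gives $E=3N^2+o(N^2)$, hence $E^*=(3+o(1))\|a\|_2^4$. The point I expect to need the most care is exactly this bookkeeping: making sure that coincidence patterns collapse correctly to two-variable counts, so that $N\|a\|_2^2$ and hypothesis (1) apply.

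\emph{Step 4: conclusion.} Theorem~\ref{theorem Extended Rademacher} gives $\|a\|_2^{-1}\sum_{n\le N}g(P(n))\xrightarrow{d}\mathcal N(0,1)$. Since $\|a\|_2\sim\sqrt N$ by Step 1, Slutsky's lemma yields \eqref{eq CLT}.
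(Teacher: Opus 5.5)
Your proposal is correct and is the paper's argument. You apply Theorem~\ref{theorem Extended Rademacher} with $a=a_N$ and $S$ the nontrivial cores, use hypothesis (1) for $\|a\|_2^2\sim N$, and match conditions (2) and (3) of that theorem to hypotheses (3) and (2). You also spell out steps the paper leaves implicit: the inclusion--exclusion giving $3N^2$, discarding core-$1$ tuples via Lemma~\ref{mixed-4-correlation}, and the Slutsky renormalization.

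One wording fix: the events $n_1=n_2$ and $n_3=n_4$ are not identical. What your inclusion--exclusion actually uses is that their symmetric difference has size at most $2N\cdot o(N)=o(N^2)$ by hypothesis (1).
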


\begin{proof}First of all, 
given a polynomial, we notice that there exists $\varepsilon\in \{ \pm1\}$ and a constant~$N_0$ such that~$\varepsilon P(n)>0$ for any $n>N_0$. We assume $\varepsilon=1$. The limiting distribution of~$\frac{1}{\sqrt{N}}\sum_{1\le n \le  N}g(P(n))$ is the same as that of~$\frac{1}{\sqrt{N}}\sum_{N_0< n \le N}g(P(n))$. Thus, we may assume that $P(n)> 0$. The rest of the deduction is the same as in the previous work. The version of random multiplicative functions is covered by Klurman--Shkredov--Xu \cite[Theorem 1.1]{KSX23} (in the Steinhaus case) if $P$ isn't a scalar multiple of a power of a linear polynomial and by Chinis and Shala \cite[Theorem 1.2]{CS25}  (in the Rademacher case) if $P$ is a product of at least two distinct linear factors $\in \Z[X]$ or irreducible of degree $2$. 

We check the conditions (1), (2) and (3) of Theorem~\ref{theorem Extended Rademacher}
with $a=a_N$ and
$$S:=\{ \co(P(n))\,: 1\le n\le N\} \smallsetminus \{1\}.$$
Condition (1) of Theorem~\ref{theorem Extended Rademacher}  is a consequence of
condition  (1)  since it implies
$$\sum_ma(m)^2=\#\left\{( n_1, n_2 )\in [N]^{2}:  P(n_1)P(n_2)  = \square \neq 0  \right\} =N(1+o(1)).$$  Condition (2) of Theorem~\ref{theorem Extended Rademacher}  is equivalent to 
condition  (3). Condition (3) of Theorem~\ref{theorem Extended Rademacher}  is equivalent to 
condition  (2).
\end{proof}

Now we complete the proof of Theorem~\ref{thm: CLT}.

\begin{proof}[Proof of Theorem~\ref{thm: CLT}]
    By Proposition~\ref{prop: mcleish}, we only need to verify the three conditions. Condition (1) follows from Proposition~\ref{prop N2} whereas condition (2) follows from Proposition~\ref{prop N4}. 



    To prove condition (3), we roughly follow \cite{KSX23}.
    However, we refine the strategy to give a reasonable quantitative estimate.
    First, Proposition~\ref{prop N4} implies that the off-diagonal contribution is $o(N^{2})$, even with a power saving in $N$.
It remains to bound the diagonal contribution.
Let $\LL(N):=\exp\{ \sqrt{(\log N)(\log\log N)}\}.$
We will show that
\begin{equation}
\label{diagonal-martingale}
S_2(N):=\# \big\{ (  n_1, n_2) \in [N]^{2}:  P^{*}(P(n_1)) = P^{*}(P(n_2))\big\}
= O\big(N^2\LL (N)^{-2+o(1)}\big),
\end{equation}
which is $o(|S|^2)$ for any set $S\subset [N]$ satisfying \eqref{thinner}
in the remark after Theorem~\ref{extended-rademacher-P}.

We denote
\begin{align*}
S_2(N,p)&:=\# \big\{ (  n_1, n_2) \in [N]^{2}:  P^{*}(P(n_1)) = P^{*}(P(n_2))=p\big\}
\cr&
=\Big(\# \big\{ n \in [N] :  P^{*}(P(n )) = p\big\}\Big)^2 .    
\end{align*}

To prove \eqref{diagonal-martingale}, we divide into three cases: $p \le  \LL(N)^2$, $ \LL(N)^2 < p \le N $ or $p> N$, where $p: = P^{*}(P(n))$. The second and third cases are a special case of the Ekedahl sieve
in the form of \cite[Theorem 3.3]{BhargavaEkedahl},
but we write out details for the reader's convenience. \\

We begin with the case $p \le  \LL(N)^2$.
To bound $\# \big\{ n \in [N] :  P^{*}(P(n )) = p\big\}$
we write $P(n)=dm^2$ with $\mu^2(d)=1$ and $p\mid d.$ We have 
$P^{*}(P(n))=P^{+}(d).$
First if $d\leq N$, by Lemma~\ref{siegel} there exists $\kappa=\kappa_P$ such that 
$\#\{ (n,m):P(n)=dm^2\}\ll \kappa^{\omega(d)}.$\footnote{This is assuming $\deg P\ge 3$, but the case $\deg P=2$ is simpler and left as an exercise to the reader.}
Then
$$\# \big\{ n \in [N] :  P^{*}(P(n )) = p, d\leq N\big\}
\ll \sum_{\substack{d\leq N\\ P^+(d)=p}}\kappa^{\omega(d)}
\ll   \sum_{\substack{d\leq N/p\\ P^+(d)\leq p}}\kappa^{\omega(d)}.$$
By  Drappeau \cite[Theorem~1]{D16}
and friable-number estimates such as \cite[(1.12)]{Granville},
we get
$$\# \big\{ n \in [N] :  P^{*}(P(n )) = p, d\leq N\big\}
\ll \frac{N}{p}(\log p)^{\kappa-1} \big( \exp\{ -u_p (\log u_p)(1+o(1))\}+\LL(N)^{-2}\big),$$
where $u_p:=(\log N)/(\log p).$ The second term accounts for the case where $p$ is too small in terms of $N$.
Secondly, if $d>N$, since $d$ is $p$-friable, we can construct a divisor $d'$ of $d$ such that $p\mid d'$ and $N/p<d'\leq N$. 
We get
\begin{align*}
    \# \big\{ n \in [N] :  P^{*}(P(n )) = p, d> N\big\}&\ll \sum_{\substack{N/p<d'\leq N\\ P^+(d')=p}}\#\{ n\in [N]: d'\mid P(n)\}
\cr&\ll\frac{N}{p} \sum_{\substack{N/p^2<d'\leq N/p\\ P^+(d')\leq p}}\frac{(\deg P)^{\omega(d') }}{d'}\end{align*}
where we used \eqref{rho-bound} to bound $\rho_P(d').$
As in the case $d\leq N$ we get
$$ \# \big\{ n \in [N] :  P^{*}(P(n )) = p, d> N\big\}
\ll \frac{N}{p}(\log p)^{\deg P} \big( \exp\{ -u_p (\log u_p)(1+o(1))\}+\LL(N)^{-2}\big).$$
We sum over $p$ such that $p\asymp \LL(N)^{\alpha} $ with $\alpha \leq 2.$ We have 
$$u_p (\log u_p)=\sqrt{ (\log N)(\log\log N)}(1/2\alpha+o(1)).$$
This implies
$$\sum_{p\asymp \LL(N)^{\alpha}}
S_2(N,p)\ll N^2\big( \LL(N)^{-\alpha-1/\alpha+o(1)}+ \LL(N)^{-2}\big)$$
Since $\min_{\alpha\in [0,2]} \{ \alpha+1/\alpha\}=2$, we obtain
$$\sum_{p\leq \LL(N)^{2}}
S_2(N,p)\ll N^2\LL(N)^{-2+o(1)}
.$$
Next, we consider the case $ \LL(N)^2 < p \le N$. Notice that the number of $n\le p$ such that $p|P(n)$ is at most $d=\deg P$ for each fixed $p$ and consequently, the number of {\it diagonal} solutions is at most 
\[\ll_P  \sum_{\LL(N)^2< p \le N}  \frac{N^{2}}{p^{2}}   = O_P\big(N^{2}\LL(N)^{-2}\big). \]
Finally, if $p> N$ we notice that for each fixed $n\le N$ with large $N\ge 1,$ there are at most~$O (1)$ primes $p\ge N$ with $p\vert P(n)$ and therefore there is in total $O (N)$ number of pairs~$(p,n)$ such that $p\vert P(n)$ and $p\ge N$. Combining with the fact that for each $p\ge N$ there are at most~$O (1)$ integers $n$ such that $p\vert P(n)$, it follows that the number of diagonal solutions in this regime is  $$\ll\sum_{p>N} S_2(N,p)\ll \sum_{p>N}\sum_{\substack{n_1\leq N\\ p\mid P(n_1) }}\sum_{\substack{n_2\leq N\\ p\mid P(n_2)}}1\ll \sum_{\substack{n_1\leq N }}\sum_{\substack{p\mid P(n_1)\\  p>N}}1\ll N$$ which is negligible. This concludes the proof.
\end{proof}

\section{Polynomials of higher degree}

Fix a polynomial $P\in \mathbb{Z}[X]$ with no repeated roots.
Assume $\deg{P} = r \ge 3$.
Moreover, assume $P(x)\ge \frac12 x^r$ and $P'(x)\ge 1$ for all $x\ge N_0$,
for some integer $N_0\ge 1$.

 Given a set $S$, let $\binom{S}{k}$ denote the set of $k$-element subsets of $S$.
The case $A=1$ of the following result
produces an $o(N^2)$ off-diagonal estimate
for the fourth moment of a Rademacher RMF along values of $P$.
It remains $o(N^2)$ for $A\le N^c$ for any fixed $c<4/11$.

\begin{proposition}
\label{unconditional-Rademacher-off-diagonal}
Let $N\ge N_0$ and $1\le A\le N^5$.
Let $M_4(N,A)$ be the number of sets
$$\{n_1,n_2,n_3,n_4\}\in \binom{[N_0,N]}{4}$$
such that $P(n_i)/\co(P(n_i)) \le A$ for all $1\le i\le 4$,
and $\prod_{1\le i\le 4} P(n_i)$ is a square.
Then
$$M_4(N,A) \ll_{P,\varepsilon} A^{11/9} N^{14/9+\varepsilon}.$$
\end{proposition}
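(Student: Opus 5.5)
Write $P(n_i)=q_ia_i^2$ with $q_i=\co(P(n_i))$ square-free and $a_i^2\le A$. Since $\prod_i P(n_i)$ is a square, $q_1q_2q_3q_4$ is a square. As in \eqref{def dij}, parametrize $q_i$ through $d$ and $d_{ij}$ for $1\le i<j\le 4$, with $dd_{12}\cdots d_{34}$ square-free. Then $\gcd(q_i,q_j)=dd_{ij}\cdot(\text{stuff})$, and each $q_i$ is the product of $d$ with the three $d_{ij}$ on edges at vertex $i$ of $K_4$. Because $q_i\asymp N^r/a_i^2\ge N^r/A$, every $q_i$ is large. We will combine two counting strategies and balance them in a $K_4$ decomposition according to the size of the edge labels $G_{ij}:=\gcd(P(n_i),P(n_j))\ge d\,d_{ij}$.

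\textbf{Strategy 1 (large gcd on an edge).} Suppose some edge, say $34$, has $dd_{34}$ large, say $>N^{\gamma}$. Apply the Ekedahl-type bound: the number of pairs $(n_3,n_4)$ with $\gcd(P(n_3),P(n_4))>N^\gamma$ is $\ll N^{2-\gamma+\varepsilon}+N^{1+\varepsilon}$. We prove this by summing $\rho_P(g)(N/g+1)^{\cdot}$ over divisors $g$ of $\mathrm{Res}$-type quantities; more simply, fix $n_3$ and a divisor $g\mid P(n_3)$ with $g>N^\gamma$, then count $n_4$ with $g\mid P(n_4)$ using \eqref{rho-bound}. This is $\ll N^{\varepsilon}(N/g+1)$, with large $g$ handled via $g\mid P(n_3)$, giving at most $N^\varepsilon$ choices. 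Once $n_3,n_4$ are fixed, $q_1q_2\equiv q_3q_4$ modulo squares, so $\co(P(n_1)P(n_2))=\co(P(n_3)P(n_4))=:Q$ is fixed. Now choose $n_1$ freely ($N$ ways). Then $P(n_2)$ has its core determined and its square part at most $A$. By Lemma~\ref{siegel}, with $d=\co(Q q_1)$, there are at most $N^\varepsilon$ choices of $n_2$. This gives $\ll N^{3-\gamma+\varepsilon}$, which is not good enough alone. So we will instead use the edge-$12$ transfer from the introduction: $dd_{34}$ large forces $d_{12}$ large, since $q_1q_2=d^2d_{12}^2\cdot(\text{edge terms})$.

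\textbf{Strategy 2 (all gcds small).} If all $dd_{ij}\le N^{\gamma}$, then the $q_i$ are nearly pairwise coprime, so each $q_i\le\prod_{j\ne i}dd_{ij}\le N^{3\gamma}$ up to the $d$ overlap. This contradicts $q_i\ge N^r/A$ when $3\gamma<r-\log A/\log N$. More precisely, $\prod_i q_i=d^4\prod d_{ij}^2$, so $\prod d_{ij}$ is $\gg (N^{4r}/A^4)^{1/2}/d^2$, and hence some edge label is at least about $N^{r/3}A^{-1/3}$. Therefore we are always in a situation where a large gcd exists, and the real work is to exploit two heavy edges simultaneously. Fix $(n_3,n_4)$ via the Ekedahl bound in terms of $dd_{34}$. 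Then fix $n_1$ with the congruence $dd_{13}d_{14}\mid P(n_1)$, where the divisors come from the now-known $q_3,q_4$ up to $N^\varepsilon$ choices. This costs $N^{\varepsilon}(N/(d\,d_{13}d_{14})+1)$. Finally, $n_2$ is determined up to $N^\varepsilon$ by Lemma~\ref{siegel} together with the pair-equation Lemma~\ref{CS2.5} with $a,b\le A$, $a\ne b$, applied to $a_2^2P(n_1)\cdot(\ldots)=a_1^2P(n_2)\cdot(\ldots)$. That route loses $N^{1/3}$ and accounts for the $A$-power; we take whichever is cheaper.

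\textbf{Optimization (main obstacle).} The main obstacle is organizing the case split over the sizes $N^{\alpha},N^{\alpha_{ij}}$ so that the three bounds combine to $A^{11/9}N^{14/9}$. These bounds are: the Ekedahl-type count for a heavy edge, the congruence count for a vertex adjacent to two known vertices, and the final Siegel/Bombieri--Pila step. This is a small linear program, analogous to Lemma~\ref{lemma maj CBalphabeta}. I expect the extremal configuration to be the symmetric one, with all $\alpha_{ij}$ equal to about $(r-\log_N A)/3$ and $\alpha=0$. There, geometric averaging of the options $N^{2-\alpha_{34}}\cdot N^{1-2\alpha_{ij}}\cdot N^{1/3}$ against the trivial bound yields the exponent $14/9$, and $A$ enters through $q_i\ge N^r/A$ with total weight $11/9$. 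I would first verify this symmetric point, then argue by convexity (taking a minimum over permutations of vertices, as in the $S_{ijk}$, $T_{ij}$ bounds) that asymmetric configurations are no worse.
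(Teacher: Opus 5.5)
Your proposal stops where the proof has to happen. The final step is deferred to a ``small linear program'' that you never solve, and the heuristics you give for it are wrong.

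The symmetric point $\alpha=0$, $\alpha_{ij}\approx(r-\log_N A)/3$ is the \emph{easy} case. For $A=1$, $r=3$, your congruence chain there (Ekedahl on one edge, the third vertex modulo $d$ times its two edges to known vertices, Lemma~\ref{siegel} for the last vertex) already gives $N^{1+\varepsilon}$.

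The hard regime is the near-diagonal one. For example, take $A=1$, $r=3$, all $n_i\asymp N$, and $\alpha_{12}=\alpha_{34}=3-\eta$, $\alpha_{13}=\alpha_{24}=\eta$, $\alpha=\alpha_{14}=\alpha_{23}=0$, so that $P(n_1)/P(n_2)$ and $P(n_3)/P(n_4)$ are ratios of small integers. There the chain bound is $\ge N^{2-\eta}$ for every labeling, and your Strategy~1 gives $N^{2+\varepsilon}$. Convexity or symmetrization cannot rescue this. In logarithmic scale each of your bounds is a sum of maxima of affine functions of the exponents, hence convex, so their minimum over labelings need not peak at the symmetric point, and it does not.

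Lemma~\ref{CS2.5} is also misplaced in your chain. Once $n_3,n_4,n_1$ are fixed, $\co(P(n_2))$ is known and Lemma~\ref{siegel} alone pins down $n_2$, so Bombieri--Pila adds nothing. Multiplying the costs as in $N^{2-\alpha_{34}}\cdot N^{1-2\alpha_{ij}}\cdot N^{1/3}$ is not a meaningful combination, and the exponents $14/9$ and $11/9$ are asserted, not derived. Finally, $q_i\gg N^r/A$ presumes $n_i\asymp N$; small $n_i$ need a per-vertex or dyadic treatment.

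The missing idea is to apply Lemma~\ref{CS2.5} to the \emph{pair of unknowns} complementary to a heavy edge, after fixing the cross labels and square parts. Done this way, your ``route (b)'' alone suffices:
\begin{enumerate}
\item Label so that $dd_{12}=\gcd(q_1,q_2)$ is the largest of $dd_{12},dd_{13},dd_{14}$. Then $(dd_{12})^3\ge q_1\ge P(n_1)/A$, so $dd_{12}\gg n_1A^{-1/3}$ because $r\ge3$.
\item The Ekedahl count (a divisor of $P(n_1)$, then \eqref{rho-bound}) gives $\ll A^{1/3}N^{1+\varepsilon}$ pairs $(n_1,n_2)$.
\item These pairs fix $q_1,q_2$, hence $d_{13},d_{14},d_{23},d_{24}$ up to $N^\varepsilon$ choices. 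Also fix $a_3,a_4\le A^{1/2}$.
\item Then $(n_3,n_4)$ lies on $a_4^2d_{14}d_{24}P(x)=a_3^2d_{13}d_{23}P(y)$. The coefficients are unequal since $n_3\ne n_4$ and $P$ is increasing, so Lemma~\ref{CS2.5} gives $\ll N^{1/3+\varepsilon}$ such pairs.
\end{enumerate}

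This yields $M_4(N,A)\ll A^{4/3}N^{4/3+\varepsilon}$. Combined with the trivial bound $N^{3+\varepsilon}$ (fix $n_1,n_2,n_3$, then apply Lemma~\ref{siegel}), it implies the statement: the first bound covers $A\le N^2$ and the second covers $A\ge N^{13/11}$.

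The paper argues differently. It uses the chain bound $N_1(N_2/(DD_{12})+1)(N_3/(DD_{13}D_{23})+1)$, which gives $A^{4/3}N^{2-\delta+\varepsilon}$ when $D_{34}\le(N_3+N_4)^{r-\delta}$. Otherwise $d,d_{13},d_{14},d_{23},d_{24},a_3,a_4$ are all $\ll N^\delta$; it enumerates them and applies Lemma~\ref{CS2.5} to \emph{both} pairs, getting $AN^{2/3+2\delta+\varepsilon}$, and balances the two cases with $N^\delta=A^{1/9}N^{4/9}$. Pairing one Ekedahl step with one Bombieri--Pila step avoids that balancing and gives a better exponent of $N$.
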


\begin{proof}
For any tuple $(n_1,n_2,n_3,n_4)$ counted by $M_4(N,A)$, we may write
$$\co(P(n_i)) = d \prod_{j\ne i} d_{ij}$$
for all $1\le i\le 4$,
where $$d = \gcd_{1\le k\le 4}(\co(P(n_k)))$$
and $$d_{ij} = d_{ji} = \gcd(\co(P(n_i))/d, \co(P(n_j))/d).$$

Suppose $n_i\asymp N_i$, $d\asymp D$, and $d_{ij}\asymp D_{ij}$.
Then $$\frac{N_i^r}{A} \ll D \prod_{j\ne i} D_{ij} \ll N_i^r$$
for all $1\le i\le 4$.
By permuting the indices $2,3,4$, we may assume that
$$D_{12} = \max(D_{12},D_{13},D_{14}).$$
By further permuting $\{3,4\}$ if necessary, we may also assume that
$$N_3 \ge N_4.$$
In particular,
$$(D D_{12})^3
\gg D D_{12}^3
\ge D D_{12} D_{13} D_{14} \gg \frac{N_1^r}{A}$$
and
$$DD_{13}D_{23} \gg \frac{N_3^r}{A D_{34}}
\gg \frac{(N_3+N_4)^r}{A D_{34}}.$$

By the divisor bound
on $P(n_1)$ and $P(n_2)$,
the Chinese remainder theorem modulo the square-free integers
$dd_{12}$ and $dd_{13}d_{23}$,
and Lemma~\ref{siegel} for the curve
$P(x) = \co(P(n_4)) y^2$,
the contribution to $M_4(N,A)$ from given dyadic parameters $N_i,D,D_{ij}$ is
$$M_4(N,A;N_i,D,D_{ij})
\ll N^\varepsilon N_1
\left(\frac{N_2}{DD_{12}} + 1\right)
\left(\frac{N_3}{DD_{13}D_{23}} + 1\right),$$
by choosing $n_1,n_2,n_3,n_4$ in that order,
and noting that $\co(P(n_4))$ is uniquely determined by $P(n_1)P(n_2)P(n_3)$.
Let $0\le \delta\le 1$.
Since $r\ge 3$, we have $DD_{12} \gg N_1/A^{1/3}$, so
\begin{equation*}
\begin{split}
M_4(N,A;N_i,D,D_{ij})
&\ll N^\varepsilon A^{1/3} (N_2 + N_1)
\left(\frac{AD_{34}N_3}{(N_3+N_4)^r} + 1\right) \\
&\ll N^\varepsilon A^{1/3} (N_2 + N_1)
A(N_3+N_4)^{1-\delta}
\ll A^{4/3} N^{2-\delta+\varepsilon},
\end{split}
\end{equation*}
provided that $D_{34} \le (N_3+N_4)^{r-\delta}$.

Suppose next that $D_{34} \ge (N_3+N_4)^{r-\delta}$.
Then for $k=3,4$ we have $$DD_{1k}D_{2k}A_k \asymp \frac{N_k^r}{D_{34}}
\ll (N_3+N_4)^\delta,$$
if we let $A_i \asymp P(n_i)/\co(P(n_i))$ for $1\le i\le 4$.
However, we have
$$\frac{\co(P(n_1))}{d_{13}d_{14}} - \frac{\co(P(n_2))}{d_{23}d_{24}}
= 0
= \frac{\co(P(n_3))}{d_{13}d_{23}} - \frac{\co(P(n_4))}{d_{14}d_{24}}.$$
Therefore,
fixing the values of
$$d_{13},d_{14},d_{23},d_{24},P(n_i)/\co(P(n_i)),$$
and applying Lemma~\ref{CS2.5},
we obtain the bound
\begin{equation*}
\begin{split}
M_4(N,A;N_i,D,D_{ij})
&\ll \sum_{\substack{d_{13},d_{14},d_{23},d_{24} \\
A_1,A_2,A_3,A_4}} (A_1A_2A_3A_4)^{1/2}
(N_1+N_2)^{1/3+\varepsilon} (N_3+N_4)^{1/3+\varepsilon} \\
&\ll (N_3+N_4)^{2\delta} A
(N_1+N_2)^{1/3+\varepsilon} (N_3+N_4)^{1/3+\varepsilon}
\ll A N^{2/3+2\delta+2\varepsilon}.
\end{split}
\end{equation*}

Taking $N^\delta = A^{1/9} N^{4/9}$, assuming $A\le N^5$,
we conclude that in every case above,
$$M_4(N,A;N_i,D,D_{ij}) \ll A^{11/9} N^{14/9+\varepsilon}.$$
Dyadic summation over $N_i,D,D_{ij}$ completes the proof.
\end{proof}

\begin{remark}Congruences modulo $dd_{12}$ and $dd_{13}d_{23}$
were similarly used in the derivation of the exponent \eqref{def ebeta} when $\deg{P}=2$.
In the argument above, we have made some effort to optimize the exponent of $N$,
but the exponent of $A$ may be suboptimal.
\end{remark}

Let $P$ be as above.
The variance (second moment)
analog of Proposition~\ref{unconditional-Rademacher-off-diagonal} is simpler.

\begin{proposition}
\label{easy-variance}
Let $A\ge 1$ and $N\ge N_0$.
Let $M_2(N,A)$ be the number of sets
$$\{n_1,n_2\}\in \binom{[N_0,N]}{2}$$
for which $P(n_i)/\co(P(n_i)) \le A$ for all $1\le i\le 2$,
and $P(n_1)P(n_2)$ is a square.
Then
$$M_2(N,A)\ll_\varepsilon A N^{1/3+\varepsilon}.$$
\end{proposition}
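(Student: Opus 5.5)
The idea is to reduce to pairs with a common square-free core and then invoke Lemma~\ref{CS2.5} once for each choice of the square parts. Let $\{n_1,n_2\}$ be counted by $M_2(N,A)$. Since $n_i\ge N_0$, both values $P(n_i)$ are positive. Because $P(n_1)P(n_2)$ is a square, the two values have the same square-free core, say $q:=\co(P(n_1))=\co(P(n_2))$. Write
\[
P(n_i)=q\,a_i \quad (i=1,2),
\]
where each $a_i=P(n_i)/\co(P(n_i))$ is a perfect square with $a_i\le A$. Then
\[
a_2P(n_1)-a_1P(n_2)=0 .
\]
Conversely, the pair $(a_1,a_2)$, together with $(n_1,n_2)$, determines the solution. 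So I would fix the pair $(a_1,a_2)$ of squares in $[1,A]$ and count the pairs $(n_1,n_2)\in[N]^2$ lying on the curve $a_2P(X)-a_1P(Y)=0$.

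\textbf{Case $a_1\ne a_2$.} Lemma~\ref{CS2.5} applies directly with $(a,b)=(a_2,a_1)$. Its hypotheses hold because $a_1,a_2\le A$, and we may assume $A\le N^{O(1)}$: otherwise the trivial bound $M_2(N,A)\le N^2$ is already at most $AN^{1/3}$ once $A\ge N^{5/3}$. Since $\deg P\ge3$, the lemma gives $O(N^{1/3+\varepsilon})$ points. The number of admissible pairs $(a_1,a_2)$ is at most $(\sqrt A)^2=A$, since each $a_i$ is a square $\le A$. Summing over them gives $O(AN^{1/3+\varepsilon})$.

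\textbf{Case $a_1=a_2$.} Then $P(n_1)=P(n_2)$. On $[N_0,\infty)$ we have $P'\ge1$, so $P$ is strictly increasing there and $n_1=n_2$. This contradicts $\{n_1,n_2\}$ being a $2$-element set, so this case contributes nothing.

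Adding the two cases gives $M_2(N,A)\ll_\varepsilon AN^{1/3+\varepsilon}$. The implied constant depends on $P$ through Lemma~\ref{CS2.5}; the statement suppresses this dependence, as $P$ is fixed throughout the section.

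The only delicate step is checking that Lemma~\ref{CS2.5} is uniform in the coefficients $a,b$, which is exactly why it requires the bound $\le\varepsilon^{-1}N^{\varepsilon^{-1}}$. I would handle this by the reduction to $A\le N^{5/3}$ described above. I do not expect any further obstacle.
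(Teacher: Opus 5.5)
Your proof is correct and follows essentially the same route as the paper. You fix the two square parts $P(n_i)/\co(P(n_i))\le A$, note that they differ because $P$ is injective on $[N_0,\infty)$, and apply Lemma~\ref{CS2.5} to each of the at most $A$ pairs; your reduction to $A\le N^{5/3}$, which secures the coefficient-size hypothesis of that lemma, is a small detail the paper leaves implicit.
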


\begin{proof}
Write $P(n_i) = dm_i^2$ where $m_i^2\le A$.
Then $m_2^2 P(n_1) = m_1^2 P(n_2)$.
Since $n_1\ne n_2$, it follows that $m_1^2\ne m_2^2$,
and $M_2(N,A) \ll \sum_{m_1,m_2} N^{1/3+\varepsilon}
\ll (A^{1/2})^2 N^{1/3+\varepsilon}$
by Lemma~\ref{CS2.5}.
\end{proof}

\begin{remark}
Proposition~\ref{easy-variance} and its proof
hold even if $\deg P = 2$.
\end{remark}

Recall the definition of H$\delta$ from \eqref{SFSC}.
For each set $S\subseteq [N_0,N]$, let
\begin{equation}
\label{define-V(S)}
V(S) := \#\{(n_1,n_2)\in S^2: P(n_1)P(n_2) = \square\}.
\end{equation}

\begin{theorem}
\label{THM:conditional-CLT}
Assume that H$\delta$ holds for some constant $\delta>0$.
Then
\begin{equation}
\label{conditional-variance}
V([N_0,N]) \sim N
\end{equation}
as $N\to \infty$.
Moreover, if $g$ is an extended Rademacher RMF then
\begin{equation}
\label{EQN:conditional-CLT-2}
\frac{1}{\sqrt{N}} \sum_{1\le n\le N} g(P(n))
\xrightarrow[]{d} \mathcal{N}(0,1).
\end{equation}
\end{theorem}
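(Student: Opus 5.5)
We plan to verify the three hypotheses of Theorem~\ref{theorem Extended Rademacher} with
$$S:=\{\co(P(n)): N_0\le n\le N\}\smallsetminus\{1\},$$
using H$\delta$ to truncate to $n$ with $P(n)/\co(P(n))\le A$, where $A:=N^{\eta}$ for a small $\eta>0$ chosen later. The first $N_0$ values of $n$ are harmless by Lemmas~\ref{mixed-2-correlation} and~\ref{mixed-4-correlation}. Let
$$G:=\{n\in[N_0,N]: P(n)/\co(P(n))\le A\},\qquad B:=[N_0,N]\smallsetminus G.$$
Since $A\le N^{1/2\cdot 2}$ is not automatic, we need a bound on $|B|$. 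H$\delta$ only controls square divisors exceeding $N$, so we combine it with Lemma~\ref{easy-sieve} on $(A^{1/2},N^{1/2}]$. This gives
$$|B|\ll N^{1-\delta}+NA^{-1/2+\varepsilon}+N^{1/2+\varepsilon}\ll N^{1-\delta'}$$
for some $\delta'>0$.

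\textbf{Variance.} We first prove \eqref{conditional-variance}. The diagonal pairs $n_1=n_2$ contribute $N+O(1)$. The off-diagonal pairs with both entries in $G$ contribute $M_2(N,A)\ll AN^{1/3+\varepsilon}=o(N)$ by Proposition~\ref{easy-variance}. The pairs with one entry in $B$ contribute $F(B)\ll N^\varepsilon|B|=o(N)$ by Lemma~\ref{mixed-2-correlation}. Since $\|a\|_2^2=V([N_0,N])$ and $a(1)\ll N^{1/3+\varepsilon}$ by Lemma~\ref{siegel} (or Lemma~\ref{weak-quadratic}), this settles hypothesis (1) and the normalization $\|a\|_2^2\sim N$.

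\textbf{Fourth moment.} For hypothesis (3), we expand
$$\sum a(m_1)\cdots a(m_4)$$
as a count of tuples $(n_1,\dots,n_4)$ with $\prod_i P(n_i)=\square$.
\begin{itemize}
\item Tuples with some $n_i\in B$: by Lemma~\ref{mixed-4-correlation}, the count is at most $E(B,\dots)\le E(B)^{1/4}E([N])^{3/4}$. We bound $E(B)$ crudely by $|B|^{3}N^{\varepsilon}$, choosing $n_1,n_2,n_3$ and then $n_4$ via Lemma~\ref{siegel}. We bound $E([N])\ll N^{3+\varepsilon}$ trivially. This gives only $N^{3-3\delta'/4}$, which is too weak.
\item Better for the $B$ tuples: split by the pairing pattern. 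If the $\co$ values pair up, say $\co(P(n_1))=\co(P(n_2))$ and $\co(P(n_3))=\co(P(n_4))$, the count is at most $V(B)\,V([N])$, and $V(B)\le |B|+F(B)=o(N)$. Otherwise we are in the off-diagonal set.
\item Genuinely off-diagonal tuples with all $n_i\in G$ contribute $M_4(N,A)\ll A^{11/9}N^{14/9+\varepsilon}=o(N^2)$ for small $\eta$, by Proposition~\ref{unconditional-Rademacher-off-diagonal}.
\end{itemize}
The main obstacle is off-diagonal tuples involving some $n_i\in B$, where $P(n_i)$ has a large square factor. For these we plan to reuse the proof of Proposition~\ref{unconditional-Rademacher-off-diagonal} with general $A_i$, but weighted by the sparse set. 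Concretely, choose the $B$-element first (at most $N^{1-\delta'}$ choices), then the remaining three as in that proof, so the $A$ losses attach only to good indices. If that fails, we would try to insert a Hölder step with $E(B)\le N^{2-c}$, proved by applying Lemma~\ref{siegel} to the last variable and the pair counts to the others.

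Diagonal pairings among three or more coincident cores are $O(N^{1+\varepsilon})$. The three pairings each give $V([N_0,N])^2\sim N^2$, so hypothesis (3) holds with $(3+o(1))\|a\|_2^4$.

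Finally, for hypothesis (2) we follow the diagonal argument from the proof of Theorem~\ref{thm: CLT}. Its off-diagonal part is already covered above. Its diagonal part is bounded by $S_2(N)=o(N^2)$ through the three ranges of $p=P^*(P(n))$ together with Lemma~\ref{siegel}, exactly as in \eqref{diagonal-martingale}.
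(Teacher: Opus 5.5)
Your variance argument is fine, but there is a genuine gap in hypothesis~(3) of Theorem~\ref{theorem Extended Rademacher}, and hence in the off-diagonal part of~(2). You take $S$ to be \emph{all} cores $\ne 1$. That commits you to bounding off-diagonal solutions of $\prod_i P(n_i)=\square$ in which some $n_i\in B$, i.e.\ some $P(n_i)$ has square part $>A$. Nothing in the paper controls such tuples for $\deg P\ge 3$, and neither of your fallback plans does either.

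\begin{itemize}
\item Choosing the bad element first does not fit the proof of Proposition~\ref{unconditional-Rademacher-off-diagonal}. The saving there comes from $DD_{12}\gg N_1/A^{1/3}$, which uses the square-part bound for the \emph{first} chosen variable $n_1$. If $n_1\in B$, the congruence modulo $dd_{12}$ gives no saving.
\item The H\"older route fails quantitatively. Even after replacing the useless $E([N])\ll N^{3+\varepsilon}$ by Minkowski, $E([N])^{1/4}\le E(G)^{1/4}+E(B)^{1/4}$, you would still need $E(B)=o(N^2)$.
\item What your sketch gives is only $E(B)\ll N^{\varepsilon}|B|^3$: three free choices in $B$, then Lemma~\ref{siegel} for the last variable. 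This is $o(N^2)$ only when $|B|$ is below about $N^{2/3}$, far beyond what H$\delta$ yields for small $\delta$.
\end{itemize}

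The missing idea is that the framework lets you choose $S$, and hypothesis~(1) only charges you $\sum_{m\notin S}a(m)^2$. So discard every core class that contains a bad $n$.
\begin{itemize}
\item Let $\mathcal{S}$ be the set of $n\in[N_0,N]$ with $\co(P(n))\ne 1$ such that every $m\in[N_0,N]$ with $\co(P(m))=\co(P(n))$ satisfies $P(m)/\co(P(m))\le A$. Put $S:=\{\co(P(n)):n\in\mathcal{S}\}$.
\item Membership in $\mathcal{S}$ depends only on the core. Hence $\sum_{m\notin S}a(m)^2=V([N_0,N]\smallsetminus\mathcal{S})\le F([N_0,N]\smallsetminus\mathcal{S})$.
\item By Lemma~\ref{siegel}, each bad $n$ drags in only $N^{o(1)}$ other integers with the same core. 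So $|[N_0,N]\smallsetminus\mathcal{S}|\ll N^{\varepsilon}(1+|B|)$, and Lemma~\ref{mixed-2-correlation} makes this contribution $o(N)$.
\item Now every tuple appearing in (2) and (3) lies in $\mathcal{S}^4\subseteq G^4$. The off-diagonal part is at most $24M_4(N,A)$, handled by Proposition~\ref{unconditional-Rademacher-off-diagonal}.
\item The paired part is handled by your own diagonal counting together with \eqref{diagonal-martingale}.
\end{itemize}
No tuple containing a bad $n$ ever has to be bounded. This is exactly how the paper argues, with $A=N^{1/3}$.
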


\begin{proof}
Let $A := N^{1/3}$.
Let
\begin{equation*}
\begin{split}
\mathcal{S} &:= \left\{n\in [N_0,N]:
\co(P(n)) \ne 1
\qquad \textnormal{and} \qquad
\max_{\substack{m\in [N_0,N] \\ \co(P(m))=\co(P(n))}}
\left(\frac{P(m)}{\co(P(m))}\right) \le A\right\} \\
&\subseteq \left\{n\in [N_0,N]:
\frac{P(n)}{\co(P(n))} \le A\right\}.
\end{split}
\end{equation*}
The condition for an integer $n\in [N_0,N]$
to lie in $\mathcal{S}$ depends only on $\co(P(n))$.
Therefore,
\begin{equation*}
V([N_0,N]) - V(\mathcal{S})
= V([N_0,N]\smallsetminus \mathcal{S})
\le F([N_0,N]\smallsetminus \mathcal{S})
\ll_{P,\varepsilon} N^{\varepsilon}\, |[N_0,N]\smallsetminus \mathcal{S}|,
\end{equation*}
by Lemma~\ref{mixed-2-correlation}.
However, $V(\mathcal{S}) = |\mathcal{S}| + O_\varepsilon(A N^{1/3+\varepsilon})$
by Proposition~\ref{easy-variance}.
Since $$|[N_0,N]\smallsetminus \mathcal{S}|
\ll_{P,\varepsilon} N^\varepsilon \left(1 + \frac{N}{A^{1/2-\varepsilon}} + N^{1-\delta}\right)$$
by Lemmas~\ref{siegel} and~\ref{easy-sieve}
and our assumption \eqref{SFSC},
it follows that
\begin{equation*}
V(\mathcal{S}) - N
\ll_{P,\varepsilon} \frac{N^{1+\varepsilon}}{A^{1/2}} + N^{1-\delta} + A N^{1/3+\varepsilon}
= o(N)
\end{equation*}
and
\begin{equation*}
V([N_0,N]) - N
\ll_{P,\varepsilon} \frac{N^{1+\varepsilon}}{A^{1/2}} + N^{1-\delta} + A N^{1/3+\varepsilon}
= o(N).
\end{equation*}
This establishes the variance claim, \eqref{conditional-variance}.

The CLT
\begin{equation}
\label{EQN:conditional-CLT-1}
\frac{1}{\sqrt{N}} \sum_{n\in [N_0,N]} g(P(n))
= \frac{1}{\sqrt{N}} \sum_{n\in [1,N-N_0+1]} g(P(n+N_0-1))
\xrightarrow[]{d} \mathcal{N}(0,1)
\end{equation}
follows from
the axiomatic framework of Theorem~\ref{theorem Extended Rademacher},
where we take
$a=a_{N-N_0+1}$, and
$$S := \{\co(P(n)): n\in \mathcal{S}\}.$$
Indeed, a short calculation using the definitions of $\mathcal{S}$ and $S$
shows that
$$\sum_{d\in \Z\smallsetminus S} a(d)^2 = V([N_0,N]\smallsetminus \mathcal{S}) = o(N),
\qquad \sum_{d\ge 1} a(d)^2 = V([N_0,N]) \sim N,$$
and the required fourth moment estimates hold unconditionally by Proposition~\ref{unconditional-Rademacher-off-diagonal}
and \eqref{diagonal-martingale},
since $A^{11/9} N^{14/9+\varepsilon} = o(N^2)$.
\end{proof}

\begin{remark}
Proposition~\ref{large-squares} implies that H$\delta$ holds if
 every irreducible factor of $P$ has degree~$\le~3$.
The function field analog of H$\delta$ is also known unconditionally for arbitrary square-free $P$;
this follows from Ramsay \cite{Ramsay}
and Poonen \cite{Poonen}.
Finally, we emphasize that there is hope of proving \eqref{conditional-variance}
or \eqref{EQN:conditional-CLT-2} unconditionally, without assuming H$\delta$.
For example, suppose $P(X) = b^2 Q(X)^4 - 1$ where $b\ge 2$ is square-free
and $Q\in \mathbb{Z}[X]$ with $\deg{Q}\ge 1$.
Then by
Bennett--Walsh \cite{BennettWalsh},
we have $V([N_0,N]) - |[N_0,N]| = 0$.
It is also known that if $b=1$, then $V([N_0,N]) - |[N_0,N]| \ll 1$.
For all $b\ge 1$, it follows that \eqref{conditional-variance} holds unconditionally.
\end{remark}



\appendix

\section{Numerical optimization}
\label{code}

\subsection*{Code}
Below, we present a copy of AI-generated, human-verified
\href{https://pypi.org/project/z3-solver/}{Z3Py} code
available at
\begin{center}
\url{https://github.com/wangyangvictor/quad4prod}
\end{center}
to prove $E\le 3/2$ in Lemma~\ref{lemma maj CBalphabeta}.
Z3Py is a Python-based implementation of Microsoft's Z3 solver,
which includes linear programming.
We ran the code online
using \href{https://colab.research.google.com/}{Google Colab}.

\begin{tiny}
\begin{verbatim}
!pip -q install z3-solver       # Installs Z3 in Google Colab.
from itertools import permutations
from z3 import If, Real, RealVal, Solver, sat, unsat

def rational(numerator, denominator):
    return RealVal(numerator) / RealVal(denominator)

THREE_HALVES = rational(3, 2)

# Variables: a is alpha, and aij is alpha_{ij}.
a = Real("alpha")
a12, a13, a14 = Real("alpha12"), Real("alpha13"), Real("alpha14")
a23, a24, a34 = Real("alpha23"), Real("alpha24"), Real("alpha34")
alpha_variables = [a, a12, a13, a14, a23, a24, a34]

def A(i, j):
    if i > j:
        i, j = j, i
    return {
        (1, 2): a12, (1, 3): a13, (1, 4): a14,
        (2, 3): a23, (2, 4): a24, (3, 4): a34,
    }[(i, j)]

def beta(i):
    return a + sum(A(i, j) for j in (1, 2, 3, 4) if j != i)

def c(beta_value, delta):
    # This is exactly
    #   c(beta,delta)=min{beta-delta, max{beta/2-delta, 1-beta/2}}.
    # It is written using the Z3Py If-Then-Else function If(A,B,C).
    return If(
        beta_value >= 1 + delta,
        beta_value / 2 - delta,
        If(3 * beta_value >= 2 * (1 + delta), 1 - beta_value / 2, beta_value - delta),
    )

B = {i: beta(i) for i in (1, 2, 3, 4)}
R = sum(B.values()) / 2

S_candidates = []
for i, j, k in permutations((1, 2, 3, 4), 3):
    S_candidates.append(c(B[i], 0) + c(B[j], a + A(i, j)) + c(B[k], a + A(i, k) + A(j, k)))

T_candidates = []
for i, j in permutations((1, 2, 3, 4), 2):
    T_candidates.append(c(B[i], 0) + c(B[j], a + A(i, j)) + 2 - R + (B[i] + B[j]) / 2)

# Admissible region: alpha, alpha_ij >= 0 and 0 <= beta_i <= 2.
admissible = [x >= 0 for x in alpha_variables]
admissible += [B[i] >= 0 for i in B]
admissible += [B[i] <= 2 for i in B]

# To prove E <= 3/2, ask Z3 whether the opposite is possible: R, every S_{ijk} and T_{ij} all exceed 3/2.
bad = Solver()
bad.add(admissible)
bad.add(R > THREE_HALVES)
bad.add([S > THREE_HALVES for S in S_candidates])
bad.add([T > THREE_HALVES for T in T_candidates])

bad_result = bad.check()
print("Is it possible for E > 3/2?", bad_result)     # Note: sat means yes, unsat means no.

assert bad_result == unsat     # The program stops running if E > 3/2 is possible.

# Optional sharpness check: equality is attained by the linear program.
sharp = Solver()
sharp.add(admissible)
sharp.add(R >= THREE_HALVES)
sharp.add([S >= THREE_HALVES for S in S_candidates])
sharp.add([T >= THREE_HALVES for T in T_candidates])

print("Sharpness check:", sharp.check())
print(sharp.model())
\end{verbatim}
\end{tiny}

\subsection*{Output}
The code compiles and gives the following output:
\begin{tiny}
\begin{verbatim}
Is it possible for E > 3/2? unsat
Sharpness check: sat
[alpha23 = 3/16, alpha34 = 5/4, alpha13 = 1/16, alpha = 0,
 alpha12 = 5/4, alpha14 = 3/16, alpha24 = 1/16]
\end{verbatim}
\end{tiny}

\begin{remark}
GPT-5.5 Pro claims that $E = 3/2$ occurs precisely
along the one-parameter family
\[
\alpha=0,
\qquad
\alpha_{14}=\alpha_{23}=5/4,
\qquad
\alpha_{12}=\alpha_{34}=u,
\qquad \alpha_{13}=\alpha_{24}=1/4-u,
\]
up to permutation,
where $0\le u\le 1/4$.
We do not need this fact, so we do not prove it.
\end{remark}

\end{document}